\DeclareMathOperator{\prox}{Prox}
\newtheorem{ass}[theorem]{Assumption}
\newtheorem{example}[theorem]{Example}
\newcommand{\h}[1]{\mathbf{#1}}
\DeclareMathOperator{\Rbb}{\mathbb{R}}
\DeclareMathOperator{\R}{\mathbb{R}}
\DeclareMathOperator*{\proj}{Proj}
\DeclareMathOperator*{\dom}{dom}
\DeclareMathOperator*{\inte}{int}
\DeclareMathOperator*{\argmin}{argmin}
\newcommand{\nn}{\nonumber}
\newcommand{\C}{\mathbb{C}}
\newcommand{\norm}[1]{\left \| #1 \right \|}
\newcommand{\skal}[1]{\left \langle #1 \right \rangle}
\DeclareMathOperator{\dist}{dist}
\crefname{hypothesis}{Hypothesis}{Hypotheses}
\title{A full splitting algorithm for structured difference-of-convex programs \thanks{Submitted to editors DATE.}
\funding{The research of RIB and RN has been partially supported by the Austrian Science Fund (FWF), 10.55776/W1260 and 10.55776/P34922. The research of MT has been partially supported by the Natural Science Foundation of China (No.
12471289).}}
\author{Radu Ioan Bo\c t \thanks{Faculty of Mathematics, University of Vienna, A-1090 Vienna, Austria, \email{radu.bot@univie.ac.at}.}
\and
{Rossen Nenov} \thanks{Acoustic Research Institute, Austrian Academy of Sciences, A-1010 Vienna, Austria, \email{rossen.nenov@oeaw.ac.at}.}
\and
{Min Tao} \thanks{School of Mathematics, National Key Laboratory for Novel Software Technology, Nanjing University, Nanjing, 210093, Republic of China, \email{taom@nju.edu.cn}}.}
\begin{document}

\maketitle
\begin{abstract}
In this paper, we study a class of nonconvex and nonsmooth structured difference-of-convex (DC) programs, which contain in the convex part the sum of a nonsmooth linearly composed convex function and a differentiable function, and in the concave part another nonsmooth linearly composed convex function. Among the various areas in which such problems occur, we would like to mention in particular the recovery of sparse signals. We propose an adaptive double-proximal, full-splitting algorithm with a moving center approach in the final subproblem, which addresses the challenge of evaluating compositions by decoupling the linear operator from the nonsmooth component. We establish the subsequential convergence of the generated sequence of iterates to an approximate stationary point and prove its global convergence under the Kurdyka-\L ojasiewicz property. We also discuss the tightness of the convergence results and provide insights into the rationale for seeking an approximate KKT point. This is illustrated by constructing a counterexample showing that the algorithm can diverge when seeking exact solutions. Finally, we present a practical version of the algorithm that incorporates a nonmonotone line search, which significantly improves the convergence performance.
\end{abstract}

\begin{keywords}
structured DC program; full splitting algorithm; subsequential convergence; global convergence; convergence rates; K\L  \ property; audio signal processing
\end{keywords}

\begin{AMS}
90C26, 65K05, 49M29
\end{AMS}

\section{Introduction}\label{sec:Intro}

In this paper, we consider the following nonsmooth difference-of-convex (DC) program
\begin{eqnarray}\label{PForm} \min_{{\h x} \in S} F({\h x}):=g(A{\h x})+\varphi({\h x})-f(K{\h x}), \end{eqnarray}
where $ S \subseteq \R^n$ is a nonempty, convex and compact set,  $f:{\mathbb R}^p\rightarrow{\overline{\mathbb R}}:={\mathbb R}\cup\{ {\pm}\infty\}$ and $g: {\mathbb R}^s\rightarrow{\overline{\mathbb R}}$ are proper, convex and  lower semicontinuous functions,  $A:{\mathbb R}^{n}\rightarrow{\mathbb R}^s$ and  $K: {\mathbb R}^{n}\rightarrow {\mathbb R}^p$ are linear operators, and $\varphi: {\mathbb R}^n \rightarrow {\mathbb R}$ is a differentiable function with $\ell_{\nabla \varphi}$-Lipschitz continuous gradient.

The study of this problem has attracted considerable interest in recent years due to its wide range of applications in various fields, most notably image processing \cite{App:Im}, machine learning \cite{App:MachineLearning}, optimal transport \cite{App:Optimal}, cryptography \cite{App:Crypto} and sparse signal recovery \cite{App:Sparse}. A more detailed overview of applications can be found in \cite{30yearsOverview}.

DC programming is particularly challenging due to the nonsmooth and nonconvex nature of the objective function.  One of the best known iterative methods for solving DC problems is the DC algorithm (DCA) introduced in \cite{TaoAn}. DCA linearizes the concave part of the objective function using subgradients and solves the resulting convex subproblem in each iteration.

In general, DCA poses two main challenges. First, the resulting convex subproblem can be difficult to solve, especially in practical applications where the problem structure is complex or high-dimensional. In such cases, it may be necessary to use a specialized non-smooth nonlinear optimization solver to effectively deal with the subproblems that arise during DCA iterations. Secondly, it relies entirely on subgradients of the concave part, which can be disadvantageous due to the possible nonuniqueness of the subgradients and the highly discontinuous nature of the subdifferential.

Developments in DC programming have focused on enhancing both the efficiency and convergence properties of DCA. One advancement in this regard is the evaluation of the convex component of the objective function by its proximal point operator, which has been shown to improve the stability and the convergence rates of the algorithm \cite{Sun03}. Additionally, inexact proximal Newton methods have been applied to DC programming in \cite{nakayama2024inexact}.
Another notable contribution is the development of the refined inertial DC algorithm \cite{you2023refined}, which integrates inertia into the optimization process to accelerate convergence while maintaining stability.  Furthermore, stochastic variants of DCA have been explored to tackle large-scale problems where deterministic approaches become computationally prohibitive \cite{stochasticDCA}. Another recent direction is the use of Armijo's backtracking linesearch rule \cite{ armijo1966minimization} to improve convergence. Work in this direction can be found in \cite{artacho2017accelerating}, which proposed the so-called Boosted DCA (BDCA) method for minimizing unconstrained smooth DC functions, and in \cite{artacho2019boosted}, which extended the former work for some classes of nonsmooth DC functions, and in \cite{artacho2022boosted} for linearly constrained DC programs.

In \cite{joki2018double,gaudioso2018minimizing,oliveira2019proximal}, a family of DC bundle methods has been developed based on the idea of replacing the convex subproblems in DCA at each iteration by strictly convex quadratic programs. Another notable approach is Sequential Difference-of-Convex Programming (SDCP) \cite{Oliveira20}, which replaces the first component with a convex model tailored to the feasible set and objective structure, while directly handling the concave part to simplify the computation of critical points.

The work \cite{SB19} represents a significant advance in this field by proposing a double-proximal gradient algorithm that evaluates both the convex and concave parts of the objective function using their respective proximal point operators. These evaluations differ from traditional methods, as they do not need to linearize either function. Furthermore, the inclusion of a linear operator in the concave part has been considered and evaluated in a forward manner similar to primal-dual full splitting methods.

This paper aims to extend this work by including a linear operator in the convex part of the objective function and developing a novel full splitting scheme. It relies only on gradient evaluation of the smooth component, proximal evaluation of the convex and concave parts separately, and conjugate unfolding to evaluate the linear operators by forward evaluation.

This paper is structured as follows. Section \ref{sec:Preliminaries} introduces the necessary preliminary definitions and results, while Section \ref{sec:Assumptions} states the main assumptions and notions of stationarity. Section \ref{sec:DPFS} introduces the conceptual full splitting scheme and discusses adaptive step size choices. Subsection \ref{sec:Adaptive_DPFS} introduces the Adaptive DPFS$_{mc}$ algorithm and presents its asymptotic convergence results. Section \ref{sec:GlobalConv} discusses the global convergence of the proposed algorithm under the assumption of the Kurdyka-\L{}ojasiewicz property, and Section \ref{sec:ConvRates} proves convergence rates and theoretical results for functions with Kurdyka-\L{}ojasiewicz exponents. Finally, in Section \ref{sec:Application} we perform numerical experiments demonstrating the effectiveness of our method for audio signal denoising.

\section{Preliminaries} \label{sec:Preliminaries}

In this section, we will introduce some notions and preliminary results that will be utilized throughout the paper. Finite-dimensional spaces within the paper will be equipped with the Euclidean norm, denoted by $\|\cdot\|$, while $\langle \cdot, \cdot \rangle$ will represent the Euclidean scalar product. For ${\h x} \in {\mathbb R}^n$ and $r >0$, $B({\h x},r):=\{{\h y} \in {\mathbb R}^n : \|{\h y} - {\h x}\| < r\}$ denotes the {\it open ball} with center ${\h x}$ and radius $r$. Given a set ${\cal C} \subseteq {\mathbb R}^n$,  ${\text{ri}}({\cal C})$ and ${\text{int}}({\cal C})$ denote its {\it relative interior} and its {\it interior}, respectively. For ${\h x} \in {\mathbb R}^n$, we denote by ${\text{\rm dist}}({\h x},{\cal C}) := \inf \{\|{\h x}-{\h c}\| : {\h c} \in {\cal C}\}$ the {\it distance} from $x$ to ${\cal C}$. If ${\cal C}$ is nonempty, convex and closed, the unique element at which the infimum is attained is the {\it projection} of ${\h x}$ onto ${\cal C}$, which we denote by $\proj_{\cal C}({\h x})$. The function $\iota_{\cal C} : {\mathbb R}^n \rightarrow {\overline{\mathbb R}}$, defined by $\iota_{\cal C}({\h x}) = 0$, for ${\h x} \in {\cal C}$, and $\iota_{\cal C}({\h x}) = +\infty$, otherwise, denotes the {\it indicator function} of the set ${\cal C}$. For a function $f:{\mathbb R}^n\rightarrow{\overline{\mathbb R}}$, we denote its {\it effective domain}  by $\dom f:=\{{\h x} \in {\mathbb R}^n: f({\h x})  < +\infty \}$ and say that it is {\it proper} if $\dom f \neq \emptyset$ and $f(\h x) > - \infty$ for all $x\in \R^n$. For $\overline{\h x} \in \R^n$ with $f(\overline{\h x}) \in \R$, the {\it Fr\'{e}chet subdifferential} of $f$ at $\overline{\h x}$ is defined as
\begin{equation*}
		{\widehat \partial} f({\overline{\h x}}):=\left\{{\h v} \in {\mathbb R}^n :{\liminf \limits_{\h x\to{\overline{\h x}}, \ {\h x}\neq {\overline{\h x}}}} \frac{f(\h x)-f(\overline{\h x})-\langle {\h v},{\h x}-{\overline{\h x}}\rangle}{\|{\h x}-\overline{\h x}\|}\ge 0\right\}.
\end{equation*}
Otherwise, we set ${\widehat \partial} f({\overline{\h x}}):= \emptyset$. For $\overline{\h x} \in \R^n$ with $f(\overline{\h x}) \in \R$, the {\it limiting subdifferential} of $f$ at $\overline{\h x}$ is defined as
\begin{equation*}
		\partial f({\overline{\h x}}):=\left\{{\h v} \in \Rbb^n \! : \! \exists\; ({\h x}^k) \rightarrow {\overline{\h x}}, \ f({\h x}^k)\rightarrow f(\overline{\h x}), ({\h v}^k) \rightarrow{\h v} \ \text{as} \ k \rightarrow +\infty, \
 {\h v}^k\in{\widehat \partial} f({\h x}^k) \right\}.
		\end{equation*}
Otherwise, we set $\partial f({\overline{\h x}}):= \emptyset$.
If $f$ is a convex function and $\varepsilon \geq 0$, for $\overline{\h x} \in \R^n$ with $f(\overline{\h x}) \in \R$ we denote by
\begin{equation}
\partial_{\varepsilon} f (\overline{\h x}) := \{{\h v} \in \Rbb^n : f({\h x}) \geq f(\overline{\h x}) + {\h v}^\top ({\h x} - \overline{\h x}) - \varepsilon \ \forall {{\h x} \in \R^n} \}
\end{equation}
the {\it $\varepsilon$-subdifferential} of $f$ at $\overline{\h x}$. Otherwise, we set $\partial_{\varepsilon} f (\overline{\h x}) := \emptyset$. For $\overline{\h x} \in \R^n$ with $f(\overline{\h x}) \in \R$, it holds
${\h v} \in \partial_{\varepsilon} f(\overline{\h x})$ if and only if $ f^*({\h v}) + f(\overline{{\h x}}) - \langle {\h v}, \overline{\h x} \rangle \leq \varepsilon$, where
$f^* : \Rbb^n \rightarrow {\overline{\mathbb R}}$, $f^*({\h v}) = \sup_{{\h x} \in \Rbb^n} \{\langle{\h v},{\h x} \rangle -f({\h x}) \}$, denotes the {\it (Fenchel) conjugate function} of $f$. The {\it convex subdifferential} of $f$ at $\overline{\h x}$ is defined by $\partial f (\overline{\h x}):=\partial_{0} f (\overline{\h x})$ and it coincides in this case with the Fr\'echet and the limiting subdifferential. For $\overline{\h x} \in \R^n$ with $f(\overline{\h x}) \in \R$, it holds ${\h v} \in \partial f(\overline{\h x})$ holds if and only if $ f^*({\h v}) + f(\overline{{\h x}}) = \langle {\h v}, \overline{\h x} \rangle$. The {\it domain} of the convex subdifferential is defined as $\dom \partial f:=\{{\h x} \in \R^n : \partial f({\h x}) \neq \emptyset\}$. Given a nonempty and convex set ${\cal C} \subseteq {\mathbb R}^n$, the set $N_{\mathcal{C}}({\h x}) := \partial \iota_{\mathcal{C}}(\h x)$ denotes the {\it normal cone} of ${\cal C}$ at ${\h x} \in {\mathbb R}^n$.

The following lemma, which can be found in \cite{BLT23}, will play a central role in establishing convergence to an approximate KKT point.

\begin{lemma}\label{epsappsol}
 Let $g:{{\mathbb R}^n}\rightarrow{\overline{\mathbb R}}$ be a proper, convex and lower semicontinuous function and ${\h w}\in{\text{\rm int}}({\text{\rm dom }}g)$.
Let $\varepsilon>0$ and ${\cal O}$ be a compact set such that $B({\h w},\varepsilon)\subseteq {\cal O} \subseteq {\text{\rm int}}({\text{\rm dom }}g)$ and $g$ is Lipschitz continuous on $\cal O$ with Lipschitz constant $\kappa$. Let  ${\widetilde{\h z}}\in{\text{\rm ri}}({\text{\rm dom}} \, g^*)$ be such that
 ${\text{\rm dist}}(\partial g^*({\widetilde {\h z}}),{\h w})\le \varepsilon$. Then it holds
 \begin{eqnarray}\label{Lem5:con} {\widetilde {\h z}}\in \partial_{\widehat{\varepsilon}} g({\h w}),\end{eqnarray}
 where ${\widehat\varepsilon}=2\kappa\varepsilon$.
 Furthermore, if $g$ is differentiable and $\nabla g$ is Lipschitz continuous with constant $\ell_{\nabla g}$, then one can set
 ${\widehat\varepsilon}=\frac{\ell_{\nabla g}}{2}\varepsilon^2$.
 \end{lemma}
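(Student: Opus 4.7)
The plan is to translate the conclusion via the Fenchel conjugate characterization recalled just before the lemma: $\widetilde{\h z}\in\partial_{\widehat\varepsilon}g({\h w})$ is equivalent to $g^*(\widetilde{\h z})+g({\h w})-\langle\widetilde{\h z},{\h w}\rangle\le\widehat\varepsilon$. So the task reduces to bounding this Fenchel-Young gap. Since $\widetilde{\h z}\in\text{ri}(\dom g^*)$ guarantees that $\partial g^*(\widetilde{\h z})$ is nonempty, closed and convex, I take ${\h u}:=\proj_{\partial g^*(\widetilde{\h z})}({\h w})$, which by the distance hypothesis satisfies $\|{\h u}-{\h w}\|\le\varepsilon$; compactness of $\mathcal{O}$ then gives ${\h u}\in\overline{B({\h w},\varepsilon)}\subseteq\mathcal{O}$. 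The inverse subdifferential rule yields $\widetilde{\h z}\in\partial g({\h u})$, and the Fenchel-Young equality at the pair $({\h u},\widetilde{\h z})$ lets me rewrite the gap as
\begin{equation*}
g^*(\widetilde{\h z})+g({\h w})-\langle\widetilde{\h z},{\h w}\rangle \;=\; \bigl(g({\h w})-g({\h u})\bigr) + \langle\widetilde{\h z},{\h u}-{\h w}\rangle.
\end{equation*}

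I then bound each of the two resulting summands by $\kappa\varepsilon$. The first bound, $g({\h w})-g({\h u})\le\kappa\|{\h w}-{\h u}\|\le\kappa\varepsilon$, is immediate from Lipschitz continuity of $g$ on $\mathcal{O}$. For the second, I invoke the standard fact that a convex function which is $\kappa$-Lipschitz on an open set has every subgradient at an interior point bounded by $\kappa$ in norm; applying this on $B({\h w},\varepsilon)\subseteq\mathcal{O}$ gives $\|\widetilde{\h z}\|\le\kappa$, and Cauchy-Schwarz delivers $\langle\widetilde{\h z},{\h u}-{\h w}\rangle\le\kappa\varepsilon$. Summing yields $\widehat\varepsilon=2\kappa\varepsilon$, as required.

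For the differentiable refinement, $\partial g({\h u})=\{\nabla g({\h u})\}$ forces $\widetilde{\h z}=\nabla g({\h u})$. Substituting this into the identity above and invoking the descent lemma for $\ell_{\nabla g}$-Lipschitz gradients at base point ${\h u}$ with argument ${\h w}$, I get
\begin{equation*}
g^*(\widetilde{\h z})+g({\h w})-\langle\widetilde{\h z},{\h w}\rangle = g({\h w})-g({\h u})-\langle\nabla g({\h u}),{\h w}-{\h u}\rangle \le \tfrac{\ell_{\nabla g}}{2}\|{\h w}-{\h u}\|^2 \le \tfrac{\ell_{\nabla g}}{2}\varepsilon^2,
\end{equation*}
which is the sharper estimate. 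The one delicate point is the bound $\|\widetilde{\h z}\|\le\kappa$ when the projection ${\h u}$ lands on the boundary $\|{\h u}-{\h w}\|=\varepsilon$ rather than strictly inside $B({\h w},\varepsilon)$; I plan to treat this case by approximating ${\h u}$ along the segment $(1-\lambda){\h u}+\lambda{\h w}\in B({\h w},\varepsilon)$ for $\lambda\downarrow 0$ and passing to the limit using closedness of the convex subdifferential graph, since all subgradients along this inner sequence are uniformly bounded by $\kappa$.
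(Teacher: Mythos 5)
The paper does not actually prove this lemma --- it is imported verbatim from \cite{BLT23} --- so there is no in-paper argument to compare against; I can only assess your proposal on its own terms. Your main line is the natural (and essentially forced) one, and it is correct as far as it goes: reduce the claim to bounding the Fenchel--Young gap $g^*(\widetilde{\h z})+g({\h w})-\langle\widetilde{\h z},{\h w}\rangle$, pick ${\h u}\in\partial g^*(\widetilde{\h z})$ realizing the distance (nonemptiness and closedness of $\partial g^*(\widetilde{\h z})$ justify this), use the equivalence ${\h u}\in\partial g^*(\widetilde{\h z})\Leftrightarrow\widetilde{\h z}\in\partial g({\h u})$ and Fenchel--Young equality to rewrite the gap as $\bigl(g({\h w})-g({\h u})\bigr)+\langle\widetilde{\h z},{\h u}-{\h w}\rangle$, and bound the two terms by $\kappa\varepsilon$ each. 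The differentiable refinement via the descent inequality at base point ${\h u}$ is also correct.

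The gap is precisely the point you flag at the end, and your proposed repair does not close it. Passing to the limit along $v_\lambda\in\partial g((1-\lambda){\h u}+\lambda{\h w})$ with $\|v_\lambda\|\le\kappa$ only produces \emph{some} element of $\partial g({\h u})$ of norm at most $\kappa$; it says nothing about the particular element $\widetilde{\h z}$, and at a point on the boundary of the region where $g$ is known to be $\kappa$-Lipschitz the subdifferential genuinely can contain vectors of much larger norm. This is not a removable technicality: take $n=1$, $g(x)=\max(-x,\,x,\,Mx-M+1)$ with $M$ large, ${\h w}=0$, $\varepsilon=1$, ${\cal O}=[-1,1]$, so that $g=|\cdot|$ on ${\cal O}$ and $\kappa=1$; then $\widetilde{\h z}:=M-\delta\in\mathrm{ri}(\dom g^*)=(-1,M)$ has $\partial g^*(\widetilde{\h z})=\{1\}$, hence $\dist(\partial g^*(\widetilde{\h z}),{\h w})=\varepsilon$, yet $\widetilde{\h z}\notin\partial_{2\kappa\varepsilon}g(0)=\partial_{2}g(0)$ once $M>3+2\delta$ (test the defining inequality at $x=2$). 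So in the equality case $\|{\h u}-{\h w}\|=\varepsilon$ the estimate $\|\widetilde{\h z}\|\le\kappa$, and with it the stated conclusion, can fail. Your argument is complete, with no limiting patch needed, under either of the readings that are evidently intended: $\dist(\partial g^*(\widetilde{\h z}),{\h w})<\varepsilon$, or $g$ being $\kappa$-Lipschitz on a neighborhood of the closed ball $\overline{B({\h w},\varepsilon)}$ (for instance on a set ${\cal O}$ containing $B({\h w},\varepsilon')$ for some $\varepsilon'>\varepsilon$); you should invoke one of these strengthenings explicitly instead of the limiting argument.
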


Given a proper, convex and lower semicontinuous function  $f : {\mathbb R}^n \to \overline{\mathbb R}$, its {\it Moreau envelope} with parameter $\alpha >0$ is defined as (\cite[Definition 1.22]{RockWets})
\begin{eqnarray}\label{calG}
{\cal G}_{f,\alpha} : {\mathbb R}^n \to {\mathbb R}, \quad {\cal G}_{f,\alpha}({\h w})=\min_{\h x \in {\mathbb R}^n} \left \{f({\h x})+\frac{1}{2\alpha}\|{\h x}-{\h w}\|^2 \right \},\end{eqnarray}
 and its corresponding {\it proximal mapping}
 $$\prox_{f,\alpha} : {\mathbb R}^n \to {\mathbb R}^n, \quad \prox_{f,\alpha}({\h w}) = \argmin_{\h x \in {\mathbb R}^n} \Big\{ f({\h x}) + \frac{1}{2\alpha}\|{\h x}-{\h w}\|^2\Big\}.$$
 For every ${\h w} \in {\mathbb R}^n$ and every $\alpha >0$ it holds
 \begin{eqnarray*} \displaystyle{\prox_{f,\alpha}({\h w})+\alpha \prox_{f^*,1/\alpha}\left(\frac{{\h w}}{\alpha}\right)} = {\h w}. \end{eqnarray*}
 When $f({\h x})=\|{\h x}\|_1$, its proximal operator can be expressed by
\begin{eqnarray*}
\prox_{f,\alpha}({\h w}):=\text{\rm{Shrink}}({\h w},\alpha)= \max(|{\h w}|-\alpha,0)\odot{\text{sign}}({\h w}),\end{eqnarray*}
where $\odot$ denotes componentwise multiplication.

A proper function $f: \mathbb{R}^n \to \overline{\mathbb{R}}$ is said to be \textit{essentially strictly convex} if it is strictly convex on every nonempty convex subset of $\dom \partial f$. A proper and convex function $f: \mathbb{R}^n \to \overline{\mathbb{R}}$ is said to be \textit{essentially smooth} (see \cite{BC17}) if it satisfies the following three conditions:
\begin{itemize}
    \item[(a)] ${\rm int}(\dom f)  \neq \emptyset$,
    \item[(b)] $f$ is differentiable on ${\rm int}(\dom f)$,
    \item[(c)] For every sequence $(x^k)_{k \geq 0} \subseteq {\rm int}(\dom f)$ converging to a boundary point of $\dom f$, it holds
    \[
    \lim_{k \to +\infty} \|\nabla f(x_k)\| = +\infty.
    \]
\end{itemize}
For a proper, convex, and lower semicontinuous function $f: \mathbb{R}^n \to \overline{\mathbb{R}}$, it is known (see \cite[Corollary 18.12]{BC17}) that it is essentially strictly convex if and only if its conjugate $f^*$ is essentially smooth.

Given a linear operator $A: {\mathbb R}^n \rightarrow {\mathbb R}^m$, we denote by $A^*: {\mathbb R}^m \rightarrow {\mathbb R}^n$ its {\it adjoint operator}. We also denote by $\sigma_A$ the {\it maximal eigenvalue} of the matrix $A^*A$. It holds $\sigma_A= \|A\|^2$.

 For the differentiable function $\varphi:{\mathbb R}^n \rightarrow {\mathbb R}$ with $\ell_{\nabla \varphi}$-Lipschitz continuous gradient it holds (see, for instance, \cite{BST14})
\begin{equation}\label{eq:descent}
\varphi({\h u})\le \varphi({\h v}) + \langle {\h u}-{\h v},\nabla \varphi({\h v}) \rangle+\frac{\ell_{\nabla \varphi}}{2}\|{\h u}-{\h v}\|^2 \quad \forall\; {\h u},{\h v}\in{\mathbb R}^n.
\end{equation}

The following lemma will play a central role in the convergence analysis.  For its proof we refer to \cite[Lemma 5.31]{BC17}.

\begin{lemma}\label{ak}
Let \!\! $(a_k)_{k \geq 0}$ be a sequence of real numbers bounded from below, $(b_k)_{k \geq 0}$ a sequence of nonnegative real numbers, and $(\varepsilon_k)_{k \geq 0}$ a summable sequence of nonnegative real numbers such that
\[
 \quad a_{k+1} \leq  a_k - b_k + \varepsilon_k \quad \forall k \geq 0.
\]
Then $(a_k)_{k \geq 0}$ is convergent and $\sum_{k = 0}^{+\infty} b_k < +\infty$.
\end{lemma}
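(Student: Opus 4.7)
The plan is to reduce the inequality to a genuinely monotone situation by absorbing the perturbation term $\varepsilon_k$ into the quantity being tracked. Since $(\varepsilon_k)_{k \geq 0}$ is summable and nonnegative, the tail sum
\[
S_k := \sum_{j=k}^{+\infty} \varepsilon_j
\]
is finite for every $k \geq 0$ and satisfies $S_k \to 0$ as $k \to +\infty$, along with the identity $S_k = \varepsilon_k + S_{k+1}$.

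The key observation I would then use is to set $\tilde a_k := a_k + S_k$. Substituting the hypothesis into this definition gives
\[
\tilde a_{k+1} = a_{k+1} + S_{k+1} \leq a_k - b_k + \varepsilon_k + S_{k+1} = a_k - b_k + S_k = \tilde a_k - b_k \quad \forall k \geq 0.
\]
Because $b_k \geq 0$, this shows that $(\tilde a_k)_{k \geq 0}$ is nonincreasing. Since $S_k \geq 0$ and $(a_k)$ is bounded from below by some constant $m$, we also have $\tilde a_k \geq a_k \geq m$, so $(\tilde a_k)$ is bounded below. A nonincreasing sequence that is bounded below converges; let $\tilde a_\infty$ denote its limit. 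Because $S_k \to 0$, the identity $a_k = \tilde a_k - S_k$ then forces $a_k \to \tilde a_\infty$, which establishes the first claim.

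For the summability of $(b_k)$, I would telescope the inequality $b_k \leq \tilde a_k - \tilde a_{k+1}$: for any $N \geq 0$,
\[
\sum_{k=0}^{N} b_k \leq \tilde a_0 - \tilde a_{N+1} \leq \tilde a_0 - \tilde a_\infty < +\infty,
\]
and passing to the limit $N \to +\infty$ gives $\sum_{k=0}^{+\infty} b_k < +\infty$. There is essentially no hard step here; the only subtle point is verifying that $S_k$ is well-defined and vanishes in the limit, which is immediate from the summability assumption on $(\varepsilon_k)$. The whole argument is a standard quasi-Fej\'er / perturbed monotonicity trick, and the reference to \cite[Lemma 5.31]{BC17} confirms that this is the intended route.
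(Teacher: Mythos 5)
Your proof is correct, and the paper itself gives no proof but only refers to \cite[Lemma 5.31]{BC17}, whose argument is precisely this tail-sum / quasi-Fej\'er monotonicity trick. Nothing is missing: the auxiliary sequence $\tilde a_k = a_k + \sum_{j\ge k}\varepsilon_j$ is well-defined, nonincreasing and bounded below, and the telescoping step for $\sum_k b_k$ is valid.
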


\section{Basic assumptions and stationary points of structured DC programs} \label{sec:Assumptions}

In this section, we set out the basic assumptions and review the concept of KKT and critical points for the structured DC program  (\ref{PForm}).

\begin{ass} \label{ass1} Throughout this paper, we assume that
\begin{itemize}
    \item[(a)]  
        ${A(S)\subseteq {\rm ri}({\dom g})}$;
    \item[{(b)}] $ \underline{F}:=\inf_{\h x\in S}\{ g(A{\h x})+\varphi({\h x})-f(K{\h x})\}>-\infty$.
\end{itemize}
\end{ass}

From Assumption \ref{ass1} (b) and the Fenchel-Moreau Theorem, we see that
\begin{align*} -\infty < \underline{F} = & \inf_{ \h x \in S}\{ g(A{\h x})+\varphi({\h x})-f(K{\h x})\}\\
= &  \inf_{{\h x} \in S,{\h y} \in \mathbb{R}^p}\sup_{\h z \in \mathbb{R}^s} \left \{\langle {\h z}, A{\h x}\rangle-g^*({\h z}) +\varphi({\h x})-\langle K{\h x}, {\h y} \rangle+f^*({\h y}) \right\}.\end{align*}

\begin{definition}\label{def:critical}  We call the triple $(\overline{\h x}, \overline{\h y},\overline{\h z}) \in\mathbb{R}^n\times \mathbb{R}^p \times \mathbb{R}^s$ a KKT point of  (\ref{PForm}) if
\begin{eqnarray}\label{critcal}
A{\overline{\h x}}\in \partial g^*({\overline{\h z}}), \quad
 K {\overline{\h x}}\in\partial f^*({\overline{\h y}}) \quad \mbox{and} \quad
K^* {\overline{\h y}} \in A^* {\overline{\h z}} + \nabla\varphi({\overline{\h x}}) +N_S(\overline{\h x}).
\end{eqnarray}
\end{definition}

We notice that ${\overline{\h x} \in \mathbb{R}^n}$ is a critical point of the optimization problem (\ref{PForm}), i.e.,
  \begin{eqnarray}\label{critical} \Big( A^* {\partial} g(A{\overline{\h x}}) + \nabla \varphi({\overline{\h x}}) + N_S(\overline{\h x}) \Big) \cap K^* \partial f(K{\overline{\h x}}) \neq \emptyset
  \end{eqnarray}
    if and only if there exists a pair $(\overline{\h y},\overline{\h z}) \in\mathbb{R}^p \times \mathbb{R}^s$ such that $(\overline{\h x}, \overline{\h y},\overline{\h z})$ is a KKT point of (\ref{PForm}).

Inspired by Definition \ref{def:critical}, we introduce the notion of an approximate KKT point of the structured DC program (\ref{PForm}).

\begin{definition}\label{appsol} Given $\varepsilon\ge 0$, we call a triple $(\overline{\h x}, \overline{\h y},\overline{\h z}) \in\mathbb{R}^n\times \mathbb{R}^p \times \mathbb{R}^s$ an $\varepsilon$-approximate KKT point of  (\ref{PForm}) if
\begin{eqnarray}\label{approx critcal}
A{\overline{\h x}}\in \partial_\varepsilon g^*({\overline{\h z}}), \quad
 K {\overline{\h x}}\in\partial f^*({\overline{\h y}}) \ \mbox{and} \
K^* {\overline{\h y}} \in A^* {\overline{\h z}} + \nabla\varphi({\overline{\h x}}) +N_S(\overline{\h x}).
\end{eqnarray}
\end{definition}

We notice that that ${\overline{\h x}} \in \mathbb{R}^n$ is an ${{\varepsilon}}$-approximate critical point of the optimization problem (\ref{PForm}), i.e.,
\begin{eqnarray*}
{\Big ( A^* {\partial}_{{\varepsilon}} g(A{\overline{\h x}}) + \nabla \varphi({\overline{\h x}}) + N_S(\overline{\h x})\Big) \cap K^* \partial f(K{\overline{\h x}} ) \neq \emptyset}\end{eqnarray*}
  if and only if there exists a pair $(\overline{\h y},\overline{\h z}) \in\mathbb{R}^p \times \mathbb{R}^s$ such that $(\overline{\h x}, \overline{\h y},\overline{\h z})$ is an ${{\varepsilon}}$-approximate KKT point of  (\ref{PForm}).

For $\varepsilon =0$, $\varepsilon$-approximate KKT points are nothing more than KKT points, and the same is true for ${{\varepsilon}}$-approximate critical points and critical points.

\section{Double proximal full splitting algorithm} \label{sec:DPFS}

\subsection{Conceptual algorithmic framework}

In this subsection, our aim is to formulate a first-order algorithm characterized by a ``single-loop" structure, grounded in the overarching concept of full splitting as established in previous  work \cite{SB19, BA20, BLT23}. This approach handles proximal evaluations for each nonsmooth component independently, avoiding proximal evaluations for nonsmooth compositions involving linear operators. Following these principles from \cite{BLT23}, we use the gradient for the smooth component $\varphi$, along with the evaluation of the convex functions and the linear operators involved in the compositions $g\circ A$ and $f\circ K$ separately. Consequently, we propose a so-called double proximal full splitting algorithm ({DPFS}), which is defined for all $k\ge 0$ as
\begin{subequations}
            \label{SplitI}
	\begin{numcases}{\hbox{\quad}}
  \h x^{k+1}\! = { \text{Proj}_{S} \left[ \!{\h x}^k-\frac{1}{\rho_k}\left( A^*{\h z}^k- K^*{\h y}^k+\nabla \varphi({\h x}^k)\right) \right]},\label{xsub} \\
	\label{ysub}{\h{y}}^{k+1}=\argmin_{{\h y} \in \mathbb{R}^p}\left[ f^*({\h y})-\langle K{\h x}^{k+1}, {\h y}\rangle+\frac{\beta_k}{2}\|\h y-{\h y}^k\|^2\right],\\[0.0cm]
\label{zsub}{\h{z}}^{k+1}=\argmin_{\h z \in \mathbb{R}^s}\left[ g^*({\h z})-\langle A{\h x}^{k+1}, {\h z}\rangle+\frac{\alpha_k}{2}\|\h z-\delta_k{\h z}^k\|^2\right].
	\end{numcases}
\end{subequations}
The parameter sequences $(\rho_k)_{k\ge 0},(\beta_k)_{k\ge 0},(\alpha_k)_{k\ge 0},(\delta_k)_{k\ge 0}$ are all positive and will be specified later.

\subsection{DPFS}

In this subsection, we will specify the parameter sequences used in the construction of the sequence $({\h x_{k}}, {\h y}_{k}, {\h z}_{k})_{k\geq 0}$ in (\ref{SplitI}), and establish the first steps of the convergence analysis. Second, we provide a general framework for that adaptive step size sequences that promote the descent and convergence of a discrete energy sequence. Thirdly, we show that the subsequential convergence of the sequence generated by DPFS to an exact KKT point cannot be guaranteed if the step sizes are vanishing.

The discrete energy sequence used in the convergence analysis is defined for all $k \geq 2$ by
\begin{align}\label{Lfun2}
\psi_{k} := & \ \Psi({\h x}^{k}, {\h y}^{k}, {\h z}^{k}) + \alpha \left(\frac{1}{2} +\frac{2\alpha}{(\alpha_{k}-\alpha)}\right)\|{\h z}^{k}-{\h z}^{k-1}\|^2 \notag \\
& \ - \left(\frac{1}{2}(\alpha_{k-2} - \alpha) - 2\alpha\frac{\alpha_{k-1} - \alpha_{k-2}}{\alpha_{k-1} - \alpha} \right) \norm{\h z^{k}}^2,
\end{align}
where $\alpha_{k}\geq \alpha>0$, and
\begin{eqnarray}\label{Phif}
\Psi({\h x},{\h y}, {\h z}) := \langle{\h z}, A{\h x} \rangle - g^*({\h z}) +\varphi({\h x}) + { \iota_S({\h x})} - \langle K{\h x},{\h y}\rangle +f^*({\h y}).
\end{eqnarray}

\begin{theorem}\label{PriTheo} Suppose that the parameter sequences $(\alpha_k)_{k\ge 0}$, $(\delta_k)_{k\ge 0}$, $(\beta_k)_{k\ge 0}$, and $(\rho_k)_{k\ge0}$ satisfy
$$\alpha_k \geq \alpha_{k+1} > \alpha >0, \quad \delta_k = \frac{\alpha}{\alpha_k}, \quad \beta_k > 0 \quad \mbox{and} \quad \rho_k > \frac{\ell_{\nabla \varphi}}{2} \quad \forall k \geq 0.$$
Let {$({\h x}^0, {\h y}^0,{\h z}^0)\in S \times \R^p \times \R^s$} and $({\h x}^k,{\h y}^k,{\h z}^k)_{k\ge 0}$ be the sequence generated by (\ref{SplitI}). We define
\begin{align} \label{eq:def_omega_k}
\omega_k  := \left[\frac{1}{2}(\alpha_{k-2} - \alpha_{k-1}) + 2\alpha\left(\frac{\alpha_{k-2} - \alpha_{k-1}}{\alpha_{k-1} - \alpha} - \frac{\alpha_{k-1} - \alpha_k}{\alpha_k - \alpha}\right)\right]\|{\h z}^{k}\|^2 \quad \forall k \geq 2.
\end{align}
Then, for all $k \ge 2$, it holds
\begin{equation}\label{descent}
\begin{aligned}
&\psi_{k+1} \le \psi_{k} - c_1^k\|{\h x}^k - {\h x}^{k+1}\|^2 - c_2^k\|{\h y}^k - {\h y}^{k+1}\|^2 - c_3^k\|{\h z}^{k+1} - {\h z}^{k}\|^2 + \omega_k,
\end{aligned}
\end{equation}
where $c_1^k$, $c_2^k$, $c_3^k$ are defined by
\begin{align*}
c_1^k & := \frac{1}{2}\left(2\rho_k - \ell_{\nabla \varphi} - 2\sigma_A\left(\frac{1}{2(\alpha_{k-1} - \alpha)} + \frac{2\alpha}{(\alpha_k - \alpha)^2}\right)\right),\\
c_2^k & := \beta_k, \\
c_3^k & := \alpha\left(1 - 2\alpha\frac{\alpha_k - \alpha_{k+1}}{(\alpha_{k+1} - \alpha)(\alpha_k - \alpha)}\right).
\end{align*}
\end{theorem}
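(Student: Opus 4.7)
The strategy is a three-block descent analysis, with the novel element being a carefully designed Lyapunov augmentation of $\Psi$ that absorbs the ``ascent'' occurring in the $\h z$-block.

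For the $\h x$-update, I would combine the descent lemma~\eqref{eq:descent} applied to $\varphi$ with the projection variational inequality tested against $\h x^k\in S$: the latter yields $\langle A^*\h z^k-K^*\h y^k+\nabla\varphi(\h x^k),\,\h x^{k+1}-\h x^k\rangle \le -\rho_k\|\h x^{k+1}-\h x^k\|^2$, which gives $\Psi(\h x^{k+1},\h y^k,\h z^k)\le\Psi(\h x^k,\h y^k,\h z^k)-(\rho_k-\ell_{\nabla\varphi}/2)\|\h x^{k+1}-\h x^k\|^2$. For the $\h y$-update, strong convexity of its $\beta_k$-regularized objective evaluated at $\h y^k$ immediately gives $\Psi(\h x^{k+1},\h y^{k+1},\h z^k)\le\Psi(\h x^{k+1},\h y^k,\h z^k)-\beta_k\|\h y^{k+1}-\h y^k\|^2$.

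The delicate step is the $\h z$-update. Since minimizing $g^*(\h z)-\langle A\h x^{k+1},\h z\rangle+\tfrac{\alpha_k}{2}\|\h z-\delta_k \h z^k\|^2$ is an \emph{ascent} step for $\Psi$ in $\h z$, a direct strong-convexity argument can only lower-bound the $\Psi$-increase. The correct move is to upper-bound this increase by invoking the convex subgradient inequality for $g^*$ at $\h z^k$, with the subgradient $A\h x^k-\alpha_{k-1}(\h z^k-\delta_{k-1}\h z^{k-1})$ recovered from the optimality condition of the $(k{-}1)$-st $\h z$-subproblem. This yields
\begin{align*}
\Psi(\h x^{k+1},\h y^{k+1},\h z^{k+1})-\Psi(\h x^{k+1},\h y^{k+1},\h z^k)
&\le \langle A(\h x^{k+1}-\h x^k),\,\h z^{k+1}-\h z^k\rangle \\
&\quad + \alpha_{k-1}\langle \h z^k-\delta_{k-1}\h z^{k-1},\,\h z^{k+1}-\h z^k\rangle.
\end{align*}
I would then invoke the algebraic identity $\alpha_{k-1}(\h z^k-\delta_{k-1}\h z^{k-1})=\alpha(\h z^k-\h z^{k-1})+(\alpha_{k-1}-\alpha)\h z^k$, valid precisely because of the prescribed rule $\delta_k=\alpha/\alpha_k$, to split the second cross term into a finite-difference piece and a ``linear in $\h z^k$'' piece. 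The polarization identity $2\langle \h z^k,\h z^{k+1}-\h z^k\rangle=\|\h z^{k+1}\|^2-\|\h z^k\|^2-\|\h z^{k+1}-\h z^k\|^2$ converts the latter piece exactly into quadratic terms whose coefficients match, in shifted form, the $\|\h z^{k+1}\|^2$ and $\|\h z^k\|^2$ corrections sitting inside $\psi_{k+1}$ and $\psi_k$. The remaining inner products $\alpha\langle \h z^k-\h z^{k-1},\h z^{k+1}-\h z^k\rangle$ and $\langle A(\h x^{k+1}-\h x^k),\h z^{k+1}-\h z^k\rangle$ are controlled by Young's inequality with weights calibrated so that the $\|\h x^{k+1}-\h x^k\|^2$ contributions assemble into $c_1^k$, the $\|\h z^{k+1}-\h z^k\|^2$ contributions into $c_3^k$, and the $\|\h z^k-\h z^{k-1}\|^2$ contributions are absorbed by the corresponding shifted term $\alpha(\tfrac12+\tfrac{2\alpha}{\alpha_k-\alpha})\|\h z^k-\h z^{k-1}\|^2$ of the Lyapunov function.

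The main obstacle is the final bookkeeping: after adding the three step inequalities and identifying $\psi_{k+1}-\psi_k$, the Young weights must be chosen so that $c_1^k,c_2^k,c_3^k$ arise in precisely the stated form, while the mismatch caused by the variation of the sequence $(\alpha_k)_{k\ge 0}$ between indices $k-2,k-1,k,k+1$ is collected cleanly into $\omega_k$. The twin $\sigma_A$-contributions inside $c_1^k$, namely $\tfrac{\sigma_A}{2(\alpha_{k-1}-\alpha)}$ and $\tfrac{2\alpha\sigma_A}{(\alpha_k-\alpha)^2}$, are an indication that $\langle A(\h x^{k+1}-\h x^k),\h z^{k+1}-\h z^k\rangle$ must be estimated via two separate Young inequalities --- one whose $\|\h z^{k+1}-\h z^k\|^2$ share cancels the polarization residual produced by the identity step, and another whose share is absorbed by the shifted Lyapunov correction $\tfrac{2\alpha^2}{\alpha_{k+1}-\alpha}\|\h z^{k+1}-\h z^k\|^2$ coming from $\psi_{k+1}$.
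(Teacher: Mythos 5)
Your $\h x$- and $\h y$-blocks and the first half of your $\h z$-analysis match the paper: the subgradient inequality for $g^*$ at $\h z^k$ with the subgradient $A\h x^k-\alpha_{k-1}(\h z^k-\delta_{k-1}\h z^{k-1})$ from the previous optimality condition, the splitting $\alpha_{k-1}(\h z^k-\delta_{k-1}\h z^{k-1})=(\alpha_{k-1}-\alpha)\h z^k+\alpha(\h z^k-\h z^{k-1})$, polarization, and Young's inequality are exactly the paper's steps and lead to its intermediate estimate. However, there is a genuine gap in how you propose to close the $\h z$-block. That first inequality produces only \emph{positive} multiples of $\|\h z^{k+1}-\h z^k\|^2$ on the right-hand side (namely $\tfrac{\alpha}{2}\|\h z^{k+1}-\h z^k\|^2$ from Young, plus whatever a second Young split of $\langle A(\h x^{k+1}-\h x^k),\h z^{k+1}-\h z^k\rangle$ would add). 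To arrive at \eqref{descent} you must simultaneously (a) place the positive Lyapunov correction $\alpha\bigl(\tfrac12+\tfrac{2\alpha}{\alpha_{k+1}-\alpha}\bigr)\|\h z^{k+1}-\h z^k\|^2$ on the \emph{left} (inside $\psi_{k+1}$) and (b) retain a strictly negative surplus $-c_3^k\|\h z^{k+1}-\h z^k\|^2$ on the right. A positive term on the left cannot ``absorb'' a positive Young share on the right; you need a negative contribution in $\|\h z^{k+1}-\h z^k\|^2$ of size at least $\alpha\bigl(1+\tfrac{2\alpha}{\alpha_{k+1}-\alpha}\bigr)+c_3^k$, and no combination of Young weights applied to the single cross term can manufacture one.

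The missing ingredient is a \emph{second} inequality obtained from the monotonicity of $\partial g^*$ applied to the two consecutive optimality conditions $A\h x^{k+1}-\alpha_k\h z^{k+1}+\alpha\h z^k\in\partial g^*(\h z^{k+1})$ and $A\h x^{k}-\alpha_{k-1}\h z^{k}+\alpha\h z^{k-1}\in\partial g^*(\h z^{k})$. Pairing these against $\h z^k-\h z^{k+1}$ yields, after estimation and multiplication by $\tfrac{4\alpha}{\alpha_k-\alpha}$, the recursion
\begin{align*}
\frac{2\alpha^2}{\alpha_{k+1}-\alpha}\|\h z^{k+1}-\h z^k\|^2
\le{}& \frac{2\alpha^2}{\alpha_{k}-\alpha}\|\h z^{k}-\h z^{k-1}\|^2
+\frac{2\alpha\sigma_A}{(\alpha_k-\alpha)^2}\|\h x^{k+1}-\h x^k\|^2\\
&-\left(2\alpha-\frac{2\alpha^2(\alpha_k-\alpha_{k+1})}{(\alpha_{k+1}-\alpha)(\alpha_k-\alpha)}\right)\|\h z^{k+1}-\h z^k\|^2
+2\alpha\frac{\alpha_k-\alpha_{k-1}}{\alpha_k-\alpha}\bigl(\|\h z^k\|^2-\|\h z^{k+1}\|^2\bigr),
\end{align*}
which is what supplies the telescoping $\tfrac{2\alpha^2}{\alpha_{k+1}-\alpha}$-terms of the Lyapunov function, the required negative $\|\h z^{k+1}-\h z^k\|^2$ term giving $c_3^k$, the second $\sigma_A$-contribution $\tfrac{2\alpha\sigma_A}{(\alpha_k-\alpha)^2}$ in $c_1^k$ (it does \emph{not} come from re-splitting the cross term in the first inequality), and the $2\alpha(\cdots)$ parts of both $\omega_k$ and the $\|\h z^k\|^2$-correction in $\psi_k$. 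Without this monotonicity step your bookkeeping cannot produce the stated constants, so the proof as outlined does not close.
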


\begin{proof} Let $k \geq 2$ fixed. Since, by (\ref{xsub}),
\begin{eqnarray*}
{\h x}^{k+1} = \argmin_{{\h x} \in \R^n} \left( \iota_S({\h x}) + \langle {\h z}^k, A{\h x} \rangle + \langle {\h x} - {\h x}^k, \nabla \varphi({\h x}^k) \rangle - \langle K{\h x}, {\h y}^k \rangle + \frac{\rho_k}{2}\|{\h x} - {\h x}^k\|^2 \right),
\end{eqnarray*}
using the strong convexity of the objective function in the above optimization problems, it yields
\begin{align*}
& \ \langle {\h z}^k, A{\h x}^{k+1} \rangle + \langle {\h x}^{k+1} - {\h x}^k, \nabla \varphi({\h x}^k) \rangle - \langle K{\h x}^{k+1}, {\h y}^{k} \rangle + \rho_k \|{\h x}^{k+1} - {\h x}^k\|^2 \\
\le & \ \langle {\h z}^k, A{\h x}^{k} \rangle - \langle K{\h x}^{k}, {\h y}^{k} \rangle.
\end{align*}
From \eqref{eq:descent} it follows that
\[
\varphi({\h x}^{k+1}) \le \varphi({\h x}^k) + \langle {\h x}^{k+1} - {\h x}^k, \nabla \varphi({\h x}^k) \rangle + \frac{\ell_{\nabla \varphi}}{2}\|{\h x}^{k+1} - {\h x}^k\|^2.
\]
Adding the above two statements, yields
\begin{align*} & \ \varphi({\h x}^{k+1}) + \langle{\h z}^k, A{\h x}^{k+1} \rangle  - \langle K{\h x}^{k+1},{\h y}^k\rangle \nn\\
\le & \
\varphi({\h x}^{k})+
\langle{\h z}^k, A{\h x}^{k} \rangle  - \langle K{\h x}^{k},{\h y}^k\rangle -\frac{{ 2\rho_k}-\ell_{\nabla \varphi}}{2}\|{\h x}^k-{\h x}^{k+1}\|^2, \end{align*}
which, by invoking the definition of $\Psi$ in (\ref{Phif}), is nothing but
\begin{eqnarray} \label{xsubdesr} \Psi({\h x}^{k+1},{\h y}^{k}, {\h z}^{k})\le \Psi({\h x}^{k},{\h y}^{k}, {\h z}^{k})-\frac{{2\rho_k}-\ell_{\nabla \varphi}}{2}\|{\h x}^k-{\h x}^{k+1}\|^2. \end{eqnarray}

On the other hand, from (\ref{ysub}) we get that
$$f^*({\h y}^{k+1})- \langle K{\h x}^{k+1},{\h y}^{k+1} \rangle+\frac{\beta_k}{2}\|{\h y}^{k+1}-{\h y}^k\|^2\le f^*({\h y}^{k})- \langle K{\h x}^{k+1},{\h y}^{k} \rangle,$$
which, by invoking again the definition of $\Psi$, is nothing but
\begin{eqnarray} \label{ysubdesr} \Psi({\h x}^{k+1},{\h y}^{k+1}, {\h z}^{k})\le \Psi({\h x}^{k+1},{\h y}^{k}, {\h z}^{k})-\frac{\beta_k}{2}\|{\h y}^k-{\h y}^{k+1}\|^2. \end{eqnarray}

Further, from (\ref{zsub}) we have that
$$ A{\h x}^{k}-\alpha_{k-1}({\h z}^k-\delta_{k-1}{\h z}^{k-1})\in \partial g^*({\h z}^k),$$
which leads to
\begin{align}\label{gfun}
-g^*({\h z}^{k+1}) + \langle {\h z}^{k+1}, A{\h x}^{k+1}\rangle \le & \ -g^*({\h z}^k)+ \langle {\h z}^{k}, A{\h x}^{k+1}\rangle\nn\\
& + \langle{\h z}^{k+1}-{\h z}^k,A{\h x}^{k+1} -  \!(A{\h x}^{k}-\alpha_{k-1}({\h z}^k-\delta_{k-1}{\h z}^{k-1}))\rangle.
\end{align}
We can regroup
\begin{align*}
   & \ \langle{\h z}^{k+1}-{\h z}^k,A{\h x}^{k+1} - (A{\h x}^{k}-\alpha_{k-1}({\h z}^k-\delta_{k-1}{\h z}^{k-1}))\rangle \\
    = & \ \skal{{\h z}^{k+1}-{\h z}^k,(A{\h x}^{k+1} - A{\h x}^{k}) + (\alpha_{k-1}-\alpha){\h z}^k + \alpha ({\h z}^k-{\h z}^{k-1})},
\end{align*}
and estimate the individual parts as follows
\begin{align*}
    \skal{{\h z}^{k+1}-{\h z}^k,A{\h x}^{k+1} - A{\h x}^{k}} & \leq \frac{\sigma_A}{2(\alpha_{k-1}-\alpha)} \norm{{\h x}^k-{\h x}^{k+1}}^2 + \frac{1}{2}(\alpha_{k-1}-\alpha)\|{\h z}^k-{\h z}^{k+1}\|^2, \\
    (\alpha_{k-1}-\alpha)\skal{{\h z}^{k+1}-{\h z}^k,{\h z}^k} & = \frac{1}{2}(\alpha_{k-1}-\alpha)(\|{\h z}^{k+1}\|^2-\|{\h z}^k\|^2 -\|{\h z}^{k}-{\h z}^{k+1}\|^2), \\
   \alpha\skal{{\h z}^{k+1}-{\h z}^k,  {\h z}^k-{\h z}^{k-1}} & \leq \frac{\alpha}{2}\left( \|{\h z}^k-{\h z}^{k+1} \|^2+\|{\h z}^k-{\h z}^{k-1} \|^2\right) .
\end{align*}
Plugging these inequalities into \eqref{gfun}, and using again the definition of $\Psi$, it yields
\begin{align}\label{zsubdes}
& \ \Psi({\h x}^{k+1},{\h y}^{k+1}, {\h z}^{k+1})  -\frac{1}{2}(\alpha_k-\alpha)\|{\h z}^{k+1}\|^2 \nn \\
\leq  & \ \Psi({\h x}^{k+1},{\h y}^{k+1}, {\h z}^{k}) -\frac{1}{2}(\alpha_{k-1}-\alpha)\|{\h z}^{k}\|^2\nn\\
& +\frac{\alpha}{2}(\|{\h z}^k-{\h z}^{k+1}\|^2+\|{\h z}^{k-1}-{\h z}^{k}\|^2) +\frac{\sigma_A}{2(\alpha_{k-1}-\alpha)}\|{\h x}^k-{\h x}^{k+1}\|^2 \nn\\
&+\frac{1}{2}(\alpha_{k-1}-\alpha_k)\left(\|{\h z}^{k+1}\|^2-\|{\h z}^k\|^2\right) +\frac{1}{2}(\alpha_{k-1}-\alpha_k)\|{\h z}^k\|^2. \end{align}
On the other hand, we have
 \begin{align*}
& A{\h x}^{k+1}-\alpha_k{\h z}^{k+1}+\alpha{\h z}^k\in\partial g^*({\h z}^{k+1}),\\
& A{\h x}^{k}-\alpha_{k-1}{\h z}^{k}+\alpha{\h z}^{k-1}\in\partial g^*({\h z}^{k}),
 \end{align*}
and the monotonicity of $\partial g^*$ gives
\begin{align}
    0 \leq & \skal{{\h z}^k-{\h z}^{k+1},A{\h x}^{k} -A{\h x}^{k+1} + \alpha_k  {\h z}^{k+1} - \alpha_{k-1}{\h z}^k + \alpha({\h z}^{k-1} - {\h z}^k)  } \notag  \\
    = &  \skal{{\h z}^k-{\h z}^{k+1}, A{\h x}^{k} - \! A{\h x}^{k+1} + \!(\alpha_k - \alpha_{k-1})  {\h z}^{k+1} \! - \! \alpha_{k-1}({\h z}^k-{\h z}^{k+1}) + \alpha({\h z}^{k-1} - {\h z}^k)  } \notag . \\ &\label{eq:max_mon_g*}
\end{align}
We estimate some of the above scalar products by using the Cauchy-Schwarz inequality
\begin{align*}
    \langle{{\h z}^k-{\h z}^{k+1},A{\h x}^{k} -A{\h x}^{k+1}}\rangle &\leq \frac{\sigma_A}{2(\alpha_{k}-\alpha)} \norm{{\h x}^k-{\h x}^{k+1}}^2 + \frac{1}{2}(\alpha_{k}-\alpha)\|{\h z}^k-{\h z}^{k+1}\|^2, \\
    \alpha \skal{{\h z}^k-{\h z}^{k+1},{{\h z}^{k-1}-{\h z}^{k}}} & \leq \frac{\alpha}{2} \left( \norm{{\h z}^k-{\h z}^{k+1}}^2 + \norm{{\h z}^k-{\h z}^{k-1}}^2 \right),
\end{align*}
and rewrite the others as
\begin{align*}
    (\alpha_{k}- \alpha_{k-1})\skal{{\h z}^k-{\h z}^{k+1},{\h z}^{k+1}} &= \frac{1}{2}(\alpha_{k}- \alpha_{k-1})  \left(\|{\h z}^k\|^2-\|{\h z}^{k+1}\|^2- \norm{{\h z}^{k}-{\h z}^{k+1}}^2\right),  \\
    -\alpha_{k-1}\skal{{\h z}^k-{\h z}^{k+1},{\h z}^k-{\h z}^{k+1}} &= - \alpha_{k-1} \norm{{\h z}^k-{\h z}^{k+1}}^2.
\end{align*}
Plugging the above into \eqref{eq:max_mon_g*} gives
\begin{align*}
    0 \leq &-\frac{1}{2}(\alpha_k-\alpha)\|{\h z}^k-{\h z}^{k+1}\|^2+\frac{\sigma_A}{2(\alpha_k-\alpha)}\|{\h x}^k-{\h x}^{k+1}\|^2\\&+\frac{1}{2}(\alpha_k-\alpha_{k-1})\left[\|{\h z}^k\|^2-\|{\h z}^{k+1}\|^2\right] -\frac{\alpha}{2} \left(\|{\h z}^k-{\h z}^{k+1}\|^2-\|{\h z}^k-{\h z}^{k-1}\|^2\right),
\end{align*}
and then after multiplication by $\frac{4\alpha}{\alpha_k-\alpha}$
\begin{align*} \label{DCajc}
& \frac{2\alpha^2}{\alpha_{k+1}-\alpha}\|{\h z}^k-{\h z}^{k+1}\|^2 \\
\le & \ \frac{2\alpha^2}{\alpha_{k}-\alpha}\|{\h z}^k-{\h z}^{k-1}\|^2+\frac{2\alpha\sigma_A}{(\alpha_k-\alpha)^2}\|{\h x}^k-{\h x}^{k+1}\|^2\\
& -\left(2\alpha-2\alpha^2\frac{\alpha_k-\alpha_{k+1}}{(\alpha_{k+1}-\alpha)(\alpha_k-\alpha)}\right) \|{\h z}^k-{\h z}^{k+1}\|^2+2\alpha \frac{\alpha_k-\alpha_{k-1}}{\alpha_k-\alpha}(\|{\h z}^k\|^2 -\|{\h z}^{k+1}\|^2 ).
\end{align*}
Adding the above inequality to (\ref{zsubdes}) yields
\begin{align*}
& \Psi({\h x}^{k+1},{\h y}^{k+1},{\h z}^{k+1})-\frac{1}{2}(\alpha_k-\alpha)\|{\h z}^{k+1}\|^2+\left(\frac{2\alpha^2}{\alpha_{k+1}-\alpha}+\frac{\alpha}{2}\right)\|{\h z}^k-{\h z}^{k+1}\|^2\nn\\
\le & \ \Psi({\h x}^{k+1},{\h y}^{k+1},{\h z}^{k})-\frac{1}{2}(\alpha_{k-1}-\alpha)\|{\h z}^{k}\|^2+\left(\frac{2\alpha^2}{\alpha_{k}-\alpha}+\frac{\alpha}{2}\right)\|{\h z}^{k-1}-{\h z}^{k}\|^2\nn\\
& +\sigma_A\left(\frac{1}{2(\alpha_{k-1}-\alpha)}+\frac{2\alpha}{(\alpha_k-\alpha)^2}\right)\|{\h x}^k-{\h x}^{k+1}\|^2\nn\\
& -\alpha\left(1-2\alpha\frac{\alpha_k-\alpha_{k+1}}{(\alpha_{k+1}-\alpha)(\alpha_k-\alpha)}\right)\|{\h z}^k-{\h z}^{k+1}\|^2\nn\\
& +\underbrace{\left[2\alpha\frac{\alpha_k-\alpha_{k-1}}{\alpha_k-\alpha}-\frac{1}{2}(\alpha_{k-1}-\alpha_k)\right](\|{\h z}^k\|^2-\|{\h z}^{k+1}\|^2)+\frac{1}{2}(\alpha_{k-1}-\alpha_k)\|{\h z}^k\|^2}_\triangledown.\nn\\
\end{align*}
We regroup the terms in $\triangledown$ as follows
\begin{align*}\triangledown = & \ \ \ \left(\frac{1}{2}(\alpha_{k-1}-\alpha_k)-2\alpha \frac{\alpha_k-\alpha_{k-1}}{\alpha_k-\alpha}\right) \|{\h z}^{k+1}\|^2\\
& -\left(\frac{1}{2}(\alpha_{k-2}-\alpha_{k-1})-2\alpha \frac{\alpha_{k-1}-\alpha_{k-2}}{\alpha_{k-1}-\alpha}\right) \|{\h z}^{k}\|^2 \\
& +\left(\frac{1}{2}(\alpha_{k-2}-\alpha_{k-1}) +2\alpha \left (\frac{\alpha_{k-2}-\alpha_{k-1}}{\alpha_{k-1}-\alpha}-\frac{\alpha_{k-1}-\alpha_k}{\alpha_k-\alpha} \right )\right)\|{\h z}^{k}\|^2.\end{align*}
We recall the definition of the discrete energy function given in (\ref{Lfun2})
\begin{align*}
    \psi_{k+1} := & \ \Psi({\h x_{k+1}}, {\h y}_{k+1}, {\h z}_{k+1}) + \alpha \left(\frac{1}{2} +\frac{2\alpha}{(\alpha_{k+1}-\alpha)}\right)\|{\h z}^{k+1}-{\h z}^k\|^2 \\
    & - \left(\frac{1}{2}(\alpha_{k-1} - \alpha) - 2\alpha\frac{\alpha_{k} - \alpha_{k-1}}{\alpha_{k} - \alpha} \right) \norm{\h z^{k+1}}^2
\end{align*}
and, by combining the previous inequality with (\ref{xsubdesr}) and (\ref{ysubdesr}), conclude
 \begin{eqnarray} \psi_{k+1}&&\le \psi_{k}
-\frac{1}{2}\left({2\rho_k}-\ell_{\nabla \varphi}-2\sigma_A\left(\frac{1}{2(\alpha_{k-1}-\alpha)}+\frac{2\alpha}{(\alpha_k-\alpha)^2}\right)\right)\|{\h x}^k-{\h x}^{k+1}\|^2\nn\\
&&\quad -\frac{\beta_k}{2}\|{\h y}^k-{\h y}^{k+1}\|^2-\alpha\left(1-2\alpha\frac{\alpha_k-\alpha_{k+1}}{(\alpha_{k+1}-\alpha)(\alpha_k-\alpha)}\right)\|{\h z}^k-{\h z}^{k+1}\|^2\nn\\
&&\quad +\left(\frac{1}{2}(\alpha_{k-2}-\alpha_{k-1})+2\alpha\left(\frac{\alpha_{k-2}-\alpha_{k-1}}{\alpha_{k-1}-\alpha}-\frac{\alpha_{k-1}-\alpha_k}{\alpha_k-\alpha}\right)\right)\|{\h z}^{k}\|^2. \notag  \end{eqnarray}
This is nothing but (\ref{descent}).
\end{proof}

\subsubsection{Construction of dynamical step sizes to foster convergence}

In the following, we outline a possible regime for the step size sequences arising in {DPFS}, which induces desirable convergence properties for the discrete energy sequence and consequently for the sequence of generated iterates.

\begin{theorem}\label{subsequentialtoStat} Let Assumption \ref{ass1} be true, and $\gamma>0$, $\frac{\gamma}{2} > \alpha > 0$ and $\mu >0$. Suppose that the parameter sequences $(\alpha_k)_{k\ge 0}$, $(\beta_k)_{k\ge 0}$, $(\delta_k)_{k\ge 0}$ and $({\rho_k})_{k\ge 0}$ in {DPFS} satisfy the following properties:
\begin{align}\label{scalar}
&\lim_{k\to +\infty} \alpha_{k} = \alpha \ \mbox{and} \ \alpha + \gamma \geq \alpha_0 \geq \alpha_k \geq \alpha_{k+1} > \alpha \quad \forall k \geq 0,
\end{align}
with
\begin{align}\label{scalar2}
    &\frac{\alpha_k - \alpha_{k+1}}{(\alpha_{k+1} - \alpha)(\alpha_k - \alpha)} \leq \frac{1}{\gamma} \ \mbox{and} \ (\alpha_{k}-\alpha)(\alpha_{k+2}-\alpha) \geq (\alpha_{k+1} - \alpha)^2 \quad \forall k \geq 0,
\end{align}
\begin{align} \label{betacond} +\infty > {\overline\beta} \geq \beta_k \geq {\underline\beta} > 0, \quad \forall k \geq 0,\end{align}
\begin{align} \label{deltacond} \delta_k := \frac{\alpha}{\alpha_k} \quad \forall k \geq 0,\end{align}
and
\begin{align} \label{rhocond} \rho_k:={\frac{\ell_{\nabla \varphi}}{2}+\frac{3\sigma_A\gamma}{2(\alpha_k-\alpha)^2}+\mu} \quad \forall k \geq 0.\end{align}
Let $({\h x}^k,{\h y}^k,{\h z}^k)_{k\ge 0}$ be the sequence generated by (\ref{SplitI}). If $({\h z}^k)_{k \geq 0}$ is bounded, then the following statements are true:
\begin{itemize}
\item[(i)] The sequence of $(\psi_{k})_{k\ge 2}$ is bounded from below.
\item[(ii)] The sequence of $(\psi_{k})_{k\ge 2}$ is convergent,
i.e.,
$$ \lim\limits_{k\to+\infty} \psi_{k} \in \R.$$
\item[(iii)] The sequences of discrete velocities vanish, i.e.,
$$ \mathop{\lim}\limits_{k \to+\infty} \|{\h x}^k-{\h x}^{k+1}\|=0,\quad \mathop{\lim}\limits_{k \to+\infty} \|{\h y}^k-{\h y}^{k+1}\|=0,\quad \mathop{\lim}\limits_{k \to+\infty} \|{\h z}^k-{\h z}^{k+1}\|=0 .$$
\end{itemize}
\end{theorem}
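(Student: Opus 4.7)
The plan is to combine the descent inequality (\ref{descent}) from Theorem \ref{PriTheo} with Lemma \ref{ak}, after establishing three ingredients: (a) the coefficients $c_1^k, c_2^k, c_3^k$ are bounded below by strictly positive constants; (b) the remainder $\omega_k$ is nonnegative and summable; (c) the sequence $(\psi_k)_{k \ge 2}$ is bounded from below. Granted these, Lemma \ref{ak} immediately produces the convergence of $(\psi_k)_{k\ge 2}$ and the summability of the three squared-increment terms in (\ref{descent}), from which (iii) follows by (a).

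For (a), using $\alpha_{k-1}\geq \alpha_k$ and $\alpha_k - \alpha \le \gamma$, one estimates
\[
\sigma_A\left(\frac{1}{2(\alpha_{k-1}-\alpha)} + \frac{2\alpha}{(\alpha_k-\alpha)^2}\right) \le \frac{\sigma_A(\gamma + 4\alpha)}{2(\alpha_k - \alpha)^2} \le \frac{3\sigma_A\gamma}{2(\alpha_k - \alpha)^2},
\]
the last inequality using $\alpha < \gamma/2$; combined with (\ref{rhocond}) this gives $c_1^k \ge \mu$. The bound $c_3^k \ge \alpha(1 - 2\alpha/\gamma) > 0$ follows directly from the first inequality in (\ref{scalar2}) and $\alpha < \gamma/2$, while $c_2^k \ge \underline{\beta}$ is built into (\ref{betacond}). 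For (b), writing $d_k := \alpha_k - \alpha$, the log-convexity condition $d_k d_{k+2} \ge d_{k+1}^2$ in (\ref{scalar2}) is equivalent to the ratios $d_{k-1}/d_k$ being nonincreasing in $k$; a direct computation rewrites the second summand in the bracket of $\omega_k$ as $2\alpha(d_{k-2}/d_{k-1} - d_{k-1}/d_k)\|\h z^k\|^2 \ge 0$, so $\omega_k \ge 0$. Telescoping shows that both $\sum_k (\alpha_{k-2} - \alpha_{k-1})$ and $\sum_k (d_{k-2}/d_{k-1} - d_{k-1}/d_k)$ are finite, and combined with the boundedness of $(\h z^k)$ this yields $\sum_k \omega_k < +\infty$.

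Item (c) is the main obstacle. I would write $\psi_k = \Psi(\h x^k, \h y^k, \h z^k) + q_k$, where $q_k$ collects the quadratic contributions in $\|\h z^k - \h z^{k-1}\|^2$ and $\|\h z^k\|^2$. The terms $\varphi(\h x^k) + \langle \h z^k, A\h x^k\rangle$ are bounded since $S$ is compact and $(\h z^k)$ is bounded. By Fenchel-Young, $f^*(\h y^k) - \langle K\h x^k, \h y^k\rangle \ge -f(K\h x^k)$; since by Assumption \ref{ass1}(b) $\underline{F} > -\infty$ and both $g \circ A$ and $\varphi$ are continuous on $S$ (the former by $A(S) \subseteq \mathrm{ri}(\dom g)$), one has $\sup_{\h x \in S} f(K\h x) < +\infty$, yielding a lower bound on this summand. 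The delicate piece is an upper bound on $g^*(\h z^k)$. Fixing any $\h z^\star \in \dom g^*$ and using it as a competitor in the optimality of (\ref{zsub}) gives
\[
g^*(\h z^{k+1}) \le g^*(\h z^\star) + \langle A\h x^{k+1}, \h z^{k+1} - \h z^\star\rangle + \frac{\alpha_k}{2}\bigl(\|\h z^\star - \delta_k \h z^k\|^2 - \|\h z^{k+1} - \delta_k \h z^k\|^2\bigr),
\]
whose right-hand side is uniformly bounded. Together with the affine minorant of the proper lsc convex $g^*$, this yields a two-sided bound on $g^*(\h z^k)$. The coefficients of $\|\h z^k\|^2$ and $\|\h z^k - \h z^{k-1}\|^2$ in $q_k$ are bounded using (\ref{scalar2}) and $\alpha_k - \alpha \le \gamma$, so $q_k$ is bounded, and (c) follows.

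Finally, the descent (\ref{descent}) fits the template of Lemma \ref{ak} with $a_k := \psi_k$ (bounded below by (c)), $b_k := c_1^k\|\h x^k - \h x^{k+1}\|^2 + c_2^k\|\h y^k - \h y^{k+1}\|^2 + c_3^k\|\h z^{k+1} - \h z^k\|^2 \ge 0$, and $\varepsilon_k := \omega_k \ge 0$ (summable by (b)). The lemma then gives $\lim_k \psi_k \in \mathbb{R}$, which is (ii), together with $\sum_k b_k < +\infty$; the uniform positive lower bounds on $c_1^k, c_2^k, c_3^k$ from (a) then force the three norms to converge to zero, proving (iii).
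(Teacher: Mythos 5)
Your proposal is correct and follows essentially the same route as the paper's proof: the descent inequality \eqref{descent} is fed into Lemma \ref{ak} after verifying $c_1^k \ge \mu$, $c_2^k \ge \underline{\beta}$, $c_3^k \ge \alpha(1-2\alpha/\gamma)>0$, establishing $0 \le \omega_k$ and $\sum_k \omega_k < +\infty$ by the same telescoping of $\alpha_{k-2}-\alpha_{k-1}$ and of the ratios $(\alpha_{k-2}-\alpha)/(\alpha_{k-1}-\alpha)$, and bounding $\psi_k$ from below via the competitor trick in \eqref{zsub} (the paper uses ${\h z}^1$ where you use a generic ${\h z}^\star \in \dom g^*$) together with Fenchel--Young for the $f$-part and $\underline{F}>-\infty$. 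One small inaccuracy in part (c): the coefficient $\alpha\bigl(\tfrac12+\tfrac{2\alpha}{\alpha_k-\alpha}\bigr)$ of $\|{\h z}^k-{\h z}^{k-1}\|^2$ is \emph{not} bounded above (it diverges as $\alpha_k\to\alpha$), so "$q_k$ is bounded" is not literally true; however, that term is nonnegative and only a lower bound on $q_k$ is needed, which follows from the $\le \tfrac{3}{2}\gamma$ estimate on the coefficient of $-\|{\h z}^k\|^2$ and the boundedness of $({\h z}^k)_{k\ge 0}$, exactly as in the paper.
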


\begin{proof}
(i) Let $\ell \geq 0$ be such that $\|{\h z}^k\|\le \ell$ for all $k\geq 0$.  For all $k \geq 2$ it holds
\begin{align*}
    \frac{1}{2}(\alpha_{k-2}-\alpha)+2\alpha\frac{\alpha_{k-2}-\alpha_{k-1}}{\alpha_{k-1}-\alpha} = & \ (\alpha_{k-2}-\alpha) \left(\frac{1}{2}+2\alpha\frac{\alpha_{k-2}-\alpha_{k-1}}{(\alpha_{k-1}-\alpha)(\alpha_{k-2}-\alpha)} \right) \\
    \leq & \ \gamma \left( \frac{1}{2} + \frac{2\alpha}{\gamma}\right) \leq \frac{3}{2}\gamma,
\end{align*}
consequently,
$$ -\left(\frac{1}{2}(\alpha_{k-2}-\alpha)+2\alpha\frac{\alpha_{k-2}-\alpha_{k-1}}{\alpha_{k-1}-\alpha}\right)\norm{\h z^{k} }^2 \ge - \frac{3}{2}\gamma\ell^2.$$

By \eqref{zsub}, we have for all $k \geq 1$ that
\begin{align*}
g^*({\h z}^{k+1}) -\langle A{\h x}^{k+1},{\h z}^{k+1} \rangle &\le  g^*( {\h z}^1)- \langle A{\h x}^{k+1},{\h z}^{1} \rangle+\frac{\alpha_k}{2}\|{\h z}^1 - \delta_k {\h z}^k\|^2\nn\\
&\leq g^*( {\h z}^1)+ \norm{A{\h x}^{k+1}} \norm{{\h z}^{1}} +\alpha_0 \norm{{\h z}^1}^2 + \alpha_0 \delta_k^2 \norm{\h z^{k}}^2 \notag \\
&\leq g^*( {\h z}^1)+ \ell \norm{A{\h x}^{k+1}}  +2\alpha_0 \ell^2.
\end{align*}
We define ${\cal C}:=g^*( {\h z}^1)+2\alpha_0\ell^2+\sup_{{\h x}\in S}\{ g(A\h x) + \ell \norm{A{\h x}} \}$, which is well-defined since $S$ is a compact set and $A(S) \subseteq \mathrm{ri}(\dom g)$. Therefore we have that  for
all $k\geq 1$
\begin{align} \label{eq:F_is_bounded2}
\Psi({\h x}^{k+1},{\h y}^{k+1}, {\h z}^{k+1})&\ge-g^*( {\h z}^1)-2\alpha_0\ell^2 - \ell \norm{A {\h x}^{k+1}} -g(A{\h x}^{k+1}) \notag \\& \quad + g(A{\h x}^{k+1})+\varphi({\h x}^{k+1})-\langle K{\h x}^{k+1},{\h y}^{k+1} \rangle+ f^*({\h y}^{k+1})\notag \\
&\ge F({\h x}^{k+1})-{\cal C}\ge \underline{F}-{\cal C}>-\infty.
\end{align}
Therefore $(\psi_{k+1})_{k \geq 1}$ is bounded from below and assertion  (i) is fulfilled.

(ii) For all $k \geq 2$ it holds
\begin{align}\label{alphalowb}
\frac{2\alpha}{(\alpha_k-\alpha)^2}+ \frac{1}{2(\alpha_{k-1}-\alpha)} \le & \ \frac{2\alpha}{(\alpha_k-\alpha)^2} +\frac{1}{2(\alpha_k-\alpha)} \nn\\
\le & \ \frac{\gamma}{(\alpha_k-\alpha)^2} +\frac{\gamma}{2(\alpha_0-\alpha)(\alpha_k-\alpha)} < \frac{3\gamma}{2(\alpha_k-\alpha)^2},\end{align}
from where we deduce that
$$c_1^k = \frac{1}{2}\left(2\rho_k - \ell_{\nabla \varphi} - 2\sigma_A\left(\frac{1}{2(\alpha_{k-1} - \alpha)} + \frac{2\alpha}{(\alpha_k - \alpha)^2}\right)\right) > \mu>0.$$
Furthermore, for all $k \geq 2$, $c_2^k = \beta_k > 0$ and
$$c_3^k = \alpha\left(1 - 2\alpha\frac{\alpha_k - \alpha_{k+1}}{(\alpha_{k+1} - \alpha)(\alpha_k - \alpha)}\right) \ge \alpha\left(1-\frac{2\alpha}{\gamma}\right)> 0.$$

By the given assumptions on the sequence $(\alpha_k)_{k \geq 0}$, we have for all $k \geq 2$
\begin{align*}
    \left(\frac{\alpha_{k-2} - \alpha_{k-1}}{\alpha_{k-1} - \alpha}\right)
\;-\;
\left(\frac{\alpha_{k-1} - \alpha_{k}}{\alpha_{k} - \alpha}\right)
=& \;\frac{\alpha_{k-2} - \alpha}{\alpha_{k-1} - \alpha}
\;-\;
\frac{\alpha_{k-1} - \alpha}{\alpha_{k} - \alpha} \geq 0,
\end{align*}
consequently,
\begin{align*}
   0\leq  \omega_k \leq \left[\frac{1}{2}(\alpha_{k-2}-\alpha_{k-1}) + 2\alpha \left(\frac{\alpha_{k-2} - \alpha_{k-1}}{\alpha_{k-1} - \alpha}- \frac{\alpha_{k-1} - \alpha_{k}}{\alpha_{k} - \alpha}\right) \right] \ell^2.
\end{align*}
Summing these inequalities over $k=2\dots N$, we obtain
\begin{align*}
    0 \leq \sum_{k=2}^N \omega_k & < \left[\frac{1}{2}\alpha_{0} + 2\alpha \left(\frac{\alpha_{0} - \alpha_{1}}{\alpha_{1} - \alpha}\right) \right]\ell^2.
\end{align*}
Letting $N \to +\infty$, we derive that $\sum_{k=2}^{+\infty} \omega_k < + \infty $ and $\mathop{\lim}\limits_{k \to +\infty}\omega_k =0$.
Now, invoking equation \eqref{descent} and noting that the sequence $(\psi_k)_{k\geq 2}$ is bounded from below, by Lemma \ref{ak}, it follows that $\lim\limits_{k\to+\infty} \psi_{k} \in \R$ and
 \begin{eqnarray*} \sum_{k=2}^{+\infty} \Big(c_1^k\|{\h x}^k-{\h x}^{k+1}\|^2+c_2^k\|{\h y}^k-{\h y}^{k+1}\|^2
 +c_3^k\|{\h z}^{k+1}-{\h z}^{k}\|^2 \Big) <+\infty.\end{eqnarray*}

(iii) Since $$\varsigma :=\inf_{k \geq 0}\min(c_1^k, c_2^k, c_3^k)\ge\min(\mu,\alpha(1-2\alpha/\gamma),\inf_{k \geq 0} \beta_k) >0,$$
it yields
\begin{eqnarray*}\sum_{k=0}^{+\infty} \|{\h x}^k-{\h x}^{k+1}\|^2<+\infty,\;
                  \sum_{k=0}^{+\infty} \|{\h y}^k-{\h y}^{k+1}\|^2<+\infty,\;
                 \sum_{k=0}^{+\infty} \|{\h z}^k-{\h z}^{k+1}\|^2<+\infty. \end{eqnarray*}
                 Assertion (iii) follows directly.
\end{proof}
\begin{remark}[Explicit step size regime]
\label{remk4.3}
To simplify the arguments, let us choose $\alpha_0 := \alpha + \gamma $. Focusing on the assumptions stated in \eqref{scalar2} for the sequence $(\alpha_k)_{k \geq 0}$, we observe that for all $k \geq 0$
\begin{align*}
  0 \leq   \frac{1}{\alpha_{k+1}-\alpha} - \frac{1}{\alpha_{k}- \alpha }=\frac{\alpha_k - \alpha_{k+1}}{(\alpha_{k+1} - \alpha)(\alpha_k - \alpha)} \leq \frac{1}{\gamma}.
\end{align*}
Applying a telescoping sum over the indices $k=0,\dots,K$, we obtain
\begin{align*}
    \frac{1}{\alpha_{K+1}-\alpha} \leq \frac{K+1}{\gamma} + \frac{1}{\alpha_0 - \alpha} = \frac{K+2}{\gamma},
\end{align*}
for $K \geq 0$ and see that $\alpha_{K+1}\geq \alpha + \frac{\gamma}{K+2}$ has to hold. Moreover, the condition $(\alpha_{k}-\alpha)(\alpha_{k+2}-\alpha) \geq (\alpha_{k+1}-\alpha)^2$ ensures that the sequence $(\alpha_{k}-\alpha)_{k\geq0}$ exhibits log-convexity. This property holds, for instance, for sequences of the form $(\frac{c}{(k+1)^p})_{k\geq0}$ with $p>0$. Thus it is easy to see that the conditions in \eqref{scalar} and \eqref{scalar2} are satisfied for sequences of the form \begin{align}
       \alpha_k = \alpha + \frac{\gamma}{(k+1)^{r}} \label{propose:stepsize}
    \end{align}
with $r\in(0,1]$ for all $k\geq 0$.

\end{remark}

Although the discrete velocities vanish, as seen  in Theorem \ref{subsequentialtoStat}  (iii), this does yet not necessarily imply that the sequence generated by DPFS approaches the set of KKT points of \eqref{PForm}. In the following we give an counterexample which illustrates that {\it may not be possible to achieve even subsequential convergence to an KKT point due to the decreasing step size sequence $(\frac{1}{\rho_k})_{k\geq 1 }$}.

\subsubsection{Examining subsequential convergence to KKT points}\label{subseqnot}

We see in \eqref{rhocond} that the step size $\frac{1}{\rho_k}$ in the update \eqref{xsub} of DFPS tends to zero as iterations increase.  This  can be expected to affect the convergence properties of the algorithm. In the following, we present an example that serves to demonstrate that, indeed, if the parameters of $\rho_k$, $\beta_k$, $\alpha_k$, and $\delta_k$
are chosen according to the requirements of Theorem \ref{subsequentialtoStat}, the subsequential convergence of DPFS to an exact KKT point may not be achieved.

\begin{example}\label{ex1}
Consider the optimization problem (\ref{PForm}) with $n=p=s=2$, ${S}:=\{{\h x} \in \R^2:\|{\h x}\|_2\le {200} \}$, the functions $g({\h x})=\|{\h x}\|_1$, $\varphi({\h x})=-\frac{1}{2}\|{\h x}\|_2^2$, and $f({\h x})={\h c}^\top {\h x}$, where ${\h c}=(2,2)^\top$, and $A=K=I$, the identity operator on $\R^2$. Then $g^*=\iota_{{\cal B}_{\infty}}$, where ${\cal B}_{\infty}$ denotes the unit ball in the sup-norm in $\R^2$, and $f^*=\iota_{\{\h c\}}$.

In accordance to Theorem \ref{subsequentialtoStat}, we choose $\gamma = 3$, $\alpha = 1$, $\mu=\frac{1}{2}$,  $\alpha_{k} = \alpha + \frac{\gamma}{k+1}$,  and $${\rho_{k} = \frac{1}{2} + \frac{3}{2\gamma}(k+1)^2} + \mu = 1 + \frac{1}{2}(k+1)^2 \ \forall k \geq 0.$$ The iterative scheme (\ref{SplitI}) is for all $k \geq 0$ as follows
\begin{eqnarray*}
\left\{\begin{array}{l}
{\h x}^{k+1}={\text{\rm Proj}}_{S}\left[(1+\frac{1}{\rho_k}){\h x}^k-\frac{1}{\rho_k}{\h z}^k+\frac{1}{\rho_k}{\h c}\right],\\
{\h y}^{k+1}={\h c},\\
{\h z}^{k+1}= \proj_{{\cal B}_{\infty}}\left(\frac{{\h z}^k+ {\h x}^{k+1}}{\alpha_k}\right).
\end{array}\right.\end{eqnarray*}
For the starting point we assume that ${\h x}^0=({ 0,0})^\top$ and $\|{\h z}^0\|_{\infty}\le 1$.
We are going to show that $(\h x^k)_{k \geq 0}$ remains in the interior of $S$ and that none of the accumulation points of $(\h x^k, \h y^k, \h z^k)_{k \geq 0}$ is a KKT point of \eqref{ex1}.

We denote ${\h u}^{k} := (1+\frac{1}{\rho_k}){\h x}^k-\frac{1}{\rho_k}{\h z}^k+\frac{1}{\rho_k}{\h c} $ for all $k \geq 0$ and prove by induction that $\norm{{\h u}^k}_2 < 150$ for all $k \geq 0$.
By our choice, $\norm{{\h u}^0}_2 < 150$ holds. Assume that for $K\in \mathbb{N}$,  $ \norm{{\h u}^k}_2 < 150$ holds for all $0 \leq k\leq K$. This means that ${\h x}^{k+1}={\h u}^k$ for all $0 \leq k\leq K$.
Since $\|\h z^k\|_{\infty}\le 1$, we have that $1\le({\h c}-{\h z}^k)_i\le 3$ for $i\in \{1,2\}$ and all $0 \leq k \leq K$. Then, for all $0\leq  k\leq K$ and $i\in\{1,2\}$, we have
\begin{align} \label{eq:uk}
    \left(1+\frac{1}{\rho_k}\right)({\h x}^k )_i+\frac{1}{\rho_k}\le ({\h x}^{k+1})_i= ({\h u}^{k})_i\le \left(1+\frac{1}{\rho_k}\right)({\h x}^k)_i+\frac{3}{\rho_k},
\end{align}
therefore
\begin{align*}
    ({\h x}^{k+1})_i +3 \le \left(1+\frac{1}{\rho_k}\right)(({\h x}^k)_i+3) \le e^{\frac{1}{\rho_k} } (({\h x}^k)_i+3).
\end{align*}
Thus, for $i\in \{1,2\}$ it holds
\begin{align*}
    ({\h u}^{K+1})_i\leq \left(1+\frac{1}{\rho_{K+1}}\right)({\h x}^{K+1})_i+\frac{3}{\rho_{K+1}} &\leq
    e^{\sum_{k=0}^{K+1} \frac{1}{\rho_k}} (({\h x}^0)_i+3) - 3 \\
    &= 3e^{\sum_{k=0}^{K+1} \frac{1}{\rho_k}} - 3.
\end{align*}
Since $\sum_{k=0}^{K+1} \frac{1}{\rho_k} =  \sum_{k=0}^{K+1} \frac{2}{2 + (k+1)^2} < 1+\frac{\sqrt{2}\pi}{2}$, where we used an integral upper bound, we have that $3e^{\sum_{k=0}^{K+1} \frac{1}{\rho_k}} - 3 < 3e^{1 + \frac{\sqrt{2}\pi}{2}} -3 < 75 $. Thus $\norm{{\h u}^{K+1}}_2\leq 75 \sqrt{2}<150$, which completes the induction.

This shows that $ (\h u^k)_{k\ge 0} \subseteq \mathrm{int } (S)$ and therefore ${\h x}^{k+1} = (1+\frac{1}{\rho_k}){\h x}^k-\frac{1}{\rho_k}{\h z}^k+\frac{1}{\rho_k}{\h c}$ for all $k\geq 0$.  Since the sequence $(\h x^k, \h y^k, \h z^k)_{k\ge 0}$ is bounded, it has accumulation points. Let $(\overline{\h x},\overline{\h y},\overline{\h z})$ be such an accumulation point. Then $ \norm{\overline{\h x}}_2 \leq 150 $, $\overline{\h y}={\h c}$ and $\|\overline{\h z}\|_{\infty}\le 1$.

By Theorem \ref{subsequentialtoStat}, we have that $\overline{\h z} = \mathrm{Proj}_{\mathcal{B}_\infty}(\overline{\h z} + \overline{\h x})$, therefore $\overline{\h x}\in \partial \iota_{\mathcal{B}_\infty} (\overline{\h z})$. Furthermore, ${\overline{\h y}} = {\h c}$, so $\partial f^* (\overline{\h y})= \partial f^* ({\h c}) = \R^2$. By \eqref{eq:uk} we have that $({\h x}^k)_i > ({\h x}^0)_i=0$ for all $k\geq 1$, thus, for $i \in \{1,2\}$
\begin{align*}
    \Big(\overline{\h z} + \nabla \varphi( \overline{\h x}) + N_S(\overline{\h x})\Big)_i = (\overline{\h z} - \overline{\h x})_i \leq 1.
\end{align*}
We see that the third condition is \eqref{critcal} is not fulfilled. This shows that $(\overline{\h x}, \overline{\h y}, \overline{\h z})$ is not a KKT Point.
\end{example}
Example \ref{ex1} illustrates that, while the descent of the potential function is preserved, subsequential convergence of the sequence generated by DPFS to an exact KKT point is not guaranteed. In the next section we propose a criterion that allows the step sizes to remain constant after a finite number of iterations.  However, this comes at the cost of achieving only a \(\varepsilon\)-approximate KKT point, as defined in Definition \ref{appsol}.

\subsection{Adaptive DPFS}\label{sec:Adaptive_DPFS}

	\begin{algorithm}
	\caption{Adaptive DPFS}
	\label{AdaptiveDPFS}
	\begin{algorithmic}[1]
		\Statex{\textbf {given:} $\ell_{\nabla \varphi}>0$, $\sigma_A>0$.}
		\Statex{ \textbf{choose:} $\beta_0>0$,  $\rho_0>0$,   $0<\alpha<\frac{\gamma}{2}$, $\alpha_0 = \alpha + \gamma$,
			$\delta_0 = \frac{\alpha}{\alpha_0}$, $\mu >0$ and $\varepsilon>0$, and a starting point $({\h x}^0, {\h y}^0, {\h z}^0)$ .}
		\For{$k = 0, 1, 2, ...$}
			\State{${\h x}^{k+1} := \proj_S\left({\h x}^k + \frac{1}{\rho_k}\left( -\nabla \varphi({\h x}^k) + K^* {\h y}^k - A^* {\h z}^k\right)\right)$,}
			\State{${\h y}^{k+1} := \prox_{f^*,1/\beta_k}({\h y}^k + \frac{1}{\beta_k}K{\h x}^{k+1})$,}
			\State{${\h z}^{k+1} := \prox_{g^*,1/\alpha_k}(\delta_k{\h z}^k + \frac{1}{\alpha_k}A{\h x}^{k+1})$.}
			\If{$(\alpha_k - \alpha)\|{\h z}^{k+1}\| > \varepsilon$}
				\State{Update {$\alpha_{k+1}$ according to \eqref{scalar} and \eqref{scalar2},}}
				\State{$\rho_{k+1} := {\frac{\ell_{\nabla  \varphi}}{2}} + \frac{3\gamma\sigma_A}{ 2(\alpha_{k+1} - \alpha)^2} + \mu$,}
				\State{$\delta_{k+1} := \frac{\alpha}{\alpha_{k+1}}$.}
           \Else
           \State{$\alpha_{k+1}:=\alpha_k$; $\rho_{k+1}:=\rho_k$; $\delta_{k+1}:=\delta_k$.}
		   \EndIf
		\State{{Update \(\beta_{k+1}\) dynamically to satisfy (\ref{betacond}).}}
            \EndFor
	\end{algorithmic}
\end{algorithm}
To overcome the lack of subsequence convergence of DFPS, we propose {Algorithm \ref{AdaptiveDPFS}}, an adaptive version of it, called Adaptive DPFS. This algorithm successfully addresses the decrease of the step sizes by incorporating a step size control strategy (see lines {5-11}). This strategy has two advantages: $(1)$ under suitable assumptions, it can take constant step sizes for all associated subproblems after a finite number of iterations, while the step sizes in the ${\h y}$-subproblem can also have a positive uniform lower bound, and $(2)$ the accuracy of the approximate criticality of the accumulation points of the structured DC program (\ref{PForm}) can be controlled a priori.

The following assumption ensures that the algorithm is well-defined.
 \begin{ass}  \label{ass:A(S)}
 Assume that $K(S) \subseteq {\rm ri}(\dom f)$ and there exists a scalar $r>0$ such that $\mathcal{K}_r := \{{\h z} \in \R^s: \dist({\h z}, A(S)) \leq r\} \subseteq \mathrm{int}(\dom g)$. We denote by $\ell_{g,\mathcal{K}_r}>0$ the Lipschitz constant of $g$ on $\mathcal{K}_r$.
\end{ass}

\begin{remark}\label{rem:equiv_Kr_int} In Assumption \ref{ass1} we assume that $A(S) \subseteq {\rm ri}(\dom g)$. If we further assume that ${\rm int}(\dom g)\neq \emptyset$, then ${\rm ri}(\dom g) = {\rm int}(\dom g)$. Since $A(S)$ is a compact set and ${\rm int}(\dom g)$ is an open set, there exists $r>0$ such that $\mathcal{K}_r\subseteq {\rm int}(\dom g)$. Thus ${\rm int}(\dom g)\neq \emptyset$ is equivalent to the existence of $r>0$ such that $\mathcal{K}_r := \{{\h z} \in \R^s: \dist({\h z}, A(S)) \leq r\} \subseteq \mathrm{int}(\dom g)$
\end{remark}

\begin{theorem}\label{descent_Adaptive} (Adaptive DPFS is well-defined) Suppose that  Assumption \ref{ass1} holds and Assumption \ref{ass:A(S)} is fulfilled with a pair of $r>0$ and $\ell_{g,\mathcal{K}_r}>0$. For $0<\gamma\le\frac{{2r}}{ \ell_{g,\mathcal{K}_r}}$ and $\norm{{\h z}^0} \leq  \ell_{g,\mathcal{K}_r}$, let $({\h x}^k, {\h y}^k, {\h z}^k )_{k\geq 0}$ be the sequence generated by Algorithm \ref{AdaptiveDPFS}, where the scalar sequences $(\alpha_{k})_{k \geq 0}$, $(\beta_{k})_{k \geq 0}$, $(\delta_{k})_{k \geq 0}$ and $(\rho_{k})_{k \geq 0}$ are chosen such that \eqref{scalar}-\eqref{rhocond} are satisfied. Then, the following statements are true:
\begin{itemize}
\item[{\rm (i)}] The sequence $({{\h z}^{k}})_{k \geq 0}$ is bounded.
 \item[{\rm (ii)}]
The inner loop of lines 6-8 in Algorithm \ref{AdaptiveDPFS} will be executed at most a finite number of times, so the algorithm is well-defined.
 \item[{\rm (iii)}] There exists an index $K_0$ such that for all $k\ge K_0$ it holds $\alpha_k = {\widetilde\alpha}>\alpha>0$,
$\rho_k = \widetilde \rho:={\frac{\ell_{\nabla \varphi}}{2}+\frac{3\sigma_A\gamma}{2(\widetilde\alpha-\alpha)^2}+\mu}$, $\delta_k = \frac{\alpha}{{\widetilde\alpha}}$, and
$(\alpha_k-\alpha)\| {\h z}^{k+1}\| \le \varepsilon$.
\end{itemize}
 \end{theorem}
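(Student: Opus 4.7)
The plan is to establish the three statements in order, with (i) providing the main ingredient needed for (ii) and (iii). For (i), I would argue by induction on $k$ that $\|{\h z}^k\|\le \ell:=\ell_{g,\mathcal{K}_r}$; the base case is the standing hypothesis $\|{\h z}^0\|\le\ell_{g,\mathcal{K}_r}$. For the inductive step, the optimality condition of the ${\h z}$-subproblem (line 5 of Algorithm \ref{AdaptiveDPFS}), combined with $\delta_k = \alpha/\alpha_k$ and the Moreau identity, yields
\begin{equation*}
\hat{\h w}^{k+1}:=\alpha {\h z}^k+A{\h x}^{k+1}-\alpha_k {\h z}^{k+1}=\prox_{g,\alpha_k}(\alpha {\h z}^k+A{\h x}^{k+1}),\qquad {\h z}^{k+1}\in\partial g(\hat{\h w}^{k+1}).
\end{equation*}
Since $g$ is $\ell$-Lipschitz on $\mathcal{K}_r$, any subgradient at a point of $\mathcal{K}_r$ has norm at most $\ell$, so it suffices to show $\hat{\h w}^{k+1}\in\mathcal{K}_r$. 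The identity $\|\hat{\h w}^{k+1}-A{\h x}^{k+1}\|=\|\alpha {\h z}^k-\alpha_k {\h z}^{k+1}\|$ together with the induction hypothesis, the bounds $\alpha<\gamma/2$ and $\alpha_k\le\alpha+\gamma$, and the sharp condition $\gamma\ell\le 2r$ reduces this to a continuity/contradiction argument: if $\hat{\h w}^{k+1}$ were outside $\mathcal{K}_r$, then the segment from $A{\h x}^{k+1}\in A(S)\subseteq\mathcal{K}_r$ to $\hat{\h w}^{k+1}$ would cross $\partial\mathcal{K}_r$, and the proximal descent inequality $g(\hat{\h w}^{k+1})+\tfrac{1}{2\alpha_k}\|\hat{\h w}^{k+1}-\h q\|^2\le g(A{\h x}^{k+1})+\tfrac{1}{2\alpha_k}\|A{\h x}^{k+1}-\h q\|^2$ (with $\h q=\alpha {\h z}^k+A{\h x}^{k+1}$) combined with Lipschitz control on the portion of the segment inside $\mathcal{K}_r$ would be incompatible with $\gamma\ell\le 2r$.

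For (ii), I would argue by contradiction. Suppose the if-branch on lines 6-8 is triggered at an infinite subsequence of indices $(k_j)_{j\ge 0}$. By the update rules \eqref{scalar}-\eqref{scalar2}, the sequence $(\alpha_k)_{k\ge 0}$ is then strictly decreasing along $(k_j)$ and constant otherwise, while the requirement $\lim_{k\to+\infty}\alpha_k=\alpha$ in \eqref{scalar} forces $\alpha_{k_j}-\alpha\to 0$. Part (i) provides the uniform bound $\|{\h z}^{k_j+1}\|\le\ell$, so $(\alpha_{k_j}-\alpha)\|{\h z}^{k_j+1}\|\to 0$, and for $j$ large enough the test $(\alpha_{k_j}-\alpha)\|{\h z}^{k_j+1}\|>\varepsilon$ fails, contradicting the assumption. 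Hence the if-branch is entered only finitely many times and the algorithm is well-defined. Part (iii) follows immediately: let $K_0$ denote the last iteration at which lines 6-8 are executed (setting $K_0=0$ if that never happens). For all $k\ge K_0$ the else-branch is executed, so $\alpha_k,\rho_k,\delta_k$ remain constant at $\widetilde\alpha:=\alpha_{K_0+1}>\alpha$, $\widetilde\rho=\tfrac{\ell_{\nabla\varphi}}{2}+\tfrac{3\sigma_A\gamma}{2(\widetilde\alpha-\alpha)^2}+\mu$ and $\alpha/\widetilde\alpha$, respectively, and $(\alpha_k-\alpha)\|{\h z}^{k+1}\|\le\varepsilon$ for every such $k$ by construction.

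The main obstacle is (i), specifically closing the induction $\hat{\h w}^{k+1}\in\mathcal{K}_r$. A direct estimate based on firm non-expansiveness of $\prox_{g^*/\alpha_k}$ or on the monotonicity of $\partial g$ against a subgradient $\bar{\h z}\in\partial g(A{\h x}^{k+1})$ with $\|\bar{\h z}\|\le\ell$ only yields a recursion of the form $\|{\h z}^{k+1}\|\le\delta_k\|{\h z}^k\|+C$ with $\delta_k=\alpha/\alpha_k\uparrow 1$ as $\alpha_k\downarrow\alpha$, which is insufficient to propagate the tight bound $\ell$. The sharp induction must exploit the full strength of $\gamma\ell\le 2r$ via the geometry of the proximal minimizer, ruling out the excursion of $\hat{\h w}^{k+1}$ outside $\mathcal{K}_r$ rather than merely bounding $\|{\h z}^{k+1}\|$ in $\|{\h z}^k\|$.
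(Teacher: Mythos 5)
Parts (ii) and (iii) of your argument are correct and coincide with the paper's: the uniform bound on $(\h z^k)_{k\ge 0}$ together with $\alpha_{k_j}-\alpha\to 0$ along the subsequence where the if-branch fires forces the test $(\alpha_{k_j}-\alpha)\norm{\h z^{k_j+1}}>\varepsilon$ to fail eventually, and (iii) is then immediate. The genuine gap is in part (i), which you yourself flag as unresolved, and the route you propose cannot be closed. You reduce the induction step to showing $\hat{\h w}^{k+1}:=\prox_{g,\alpha_k}(A\h x^{k+1}+\alpha\h z^k)\in\mathcal{K}_r$, so that $\h z^{k+1}\in\partial g(\hat{\h w}^{k+1})$ inherits the bound $\ell_{g,\mathcal{K}_r}$. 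But that containment is not implied by the hypotheses: since $\hat{\h w}^{k+1}-A\h x^{k+1}=\alpha\h z^k-\alpha_k\h z^{k+1}$ and $\alpha_k$ can be as large as $\alpha_0=\alpha+\gamma$, even under the desired bounds $\norm{\h z^k},\norm{\h z^{k+1}}\le\ell_{g,\mathcal{K}_r}$ one only gets $\dist(\hat{\h w}^{k+1},A(S))\le(2\alpha+\gamma)\,\ell_{g,\mathcal{K}_r}<2\gamma\ell_{g,\mathcal{K}_r}\le 4r$. A one-dimensional instance with $A(S)=\{0\}$, $g(w)=-\ell w$ (so $\ell_{g,\mathcal{K}_r}=\ell$), $\gamma=2r/\ell$, $\alpha$ close to $\gamma/2$ and $\h z^k=\ell$ gives $\hat{\h w}^{k+1}=(\alpha+\alpha_k)\ell$ close to $4r$, hence far outside $\mathcal{K}_r$, while the conclusion $\norm{\h z^{k+1}}=\ell$ still holds. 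So the statement you aim to prove by a continuity/contradiction argument is false in general, and the induction cannot be completed along these lines.

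The paper's proof of (i) evaluates the subgradient at the \emph{input} of the prox rather than at its output, which removes the difficulty. Since $\norm{\alpha\h z^k}\le\alpha\ell_{g,\mathcal{K}_r}<r$ (because $\alpha<\gamma/2\le r/\ell_{g,\mathcal{K}_r}$), the point $\h q:=A\h x^{k+1}+\alpha\h z^k$ lies in $\mathcal{K}_r$, so one may pick $\widetilde{\h z}^{k+1}\in\partial g(\h q)$ with $\norm{\widetilde{\h z}^{k+1}}\le\ell_{g,\mathcal{K}_r}$. Comparing the dual objective $g^*(\h z)-\skal{\h q,\h z}+\tfrac{\alpha_k}{2}\norm{\h z}^2$ at its minimizer $\h z^{k+1}$ and at $\widetilde{\h z}^{k+1}$, the Fenchel--Young equality $g^*(\widetilde{\h z}^{k+1})-\skal{\h q,\widetilde{\h z}^{k+1}}=-g(\h q)$ combined with the Fenchel--Young inequality $g^*(\h z^{k+1})-\skal{\h q,\h z^{k+1}}\ge -g(\h q)$ cancels everything except the quadratic terms and yields $\norm{\h z^{k+1}}\le\norm{\widetilde{\h z}^{k+1}}\le\ell_{g,\mathcal{K}_r}$ directly. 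Your diagnosis that the naive recursion $\norm{\h z^{k+1}}\le\delta_k\norm{\h z^k}+C$ is too weak is accurate, but the missing idea is this primal--dual comparison at $\h q$, not a geometric argument about the location of $\hat{\h w}^{k+1}$.
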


 \begin{proof}
(i) We will prove this statement by induction. We have $\norm{{\h z}^0} \leq {\ell_{g,\mathcal{K}_r}}$ by the initialization of the algorithm and assume that for a $k\geq 0$, $\norm{{\h z}^k} \leq {\ell_{g,\mathcal{K}_r}}$ holds.  Since $\norm{\alpha {\h z}^{k}} \leq \alpha \ell_{g,\mathcal{K}_r} < r$, we have that ${A{\h x}^{k+1}+\alpha {\h z}^{k}}\in \mathcal{K}_r \subseteq \inte(\dom g)$. We choose ${\widetilde{\h z}}^{k+1}\in \partial g(A{\h x}^{k+1}+\alpha {\h z}^{k})$. By using again Assumption \ref{ass:A(S)}, we have that $\norm{\widetilde{\h z}^{k+1}}\leq  \ell_{g,\mathcal{K}_r}$.  By the optimality of the $\h{z}$-update in line $4$, we see that
\begin{align}
    & g^*({\h z}^{k+1}) - \skal{A{\h x}^{k+1}+\alpha {\h z}^k,{\h z}^{k+1}}+ \frac{\alpha_k}{2}\norm{{\h z}^{k+1}}^2 \notag \\
     \leq & \ g^*(\widetilde{\h z}^{k+1}) - \skal{A{\h x}^{k+1}+\alpha {\h z}^k,\widetilde{\h z}^{k+1}} + \frac{\alpha_k}{2}\norm{\widetilde{\h z}^{k+1}}^2 = - g(A{\h x}^{k+1} +\alpha {\h z}^k) + \frac{\alpha_k}{2}\norm{\widetilde{\h z}^{k+1}}^2 \notag \\
     \leq &  \ g^*({\h z}^{k+1}) - \skal{A{\h x}^{k+1}+\alpha {\h z}^k,{\h z}^{k+1}} + \frac{\alpha_k}{2}\norm{\widetilde{\h z}^{k+1}}^2. \notag
\end{align}
This implies that $\norm{{\h z}^{k+1}} \leq \norm{\widetilde{\h z}^{k+1}}\leq \ell_{g,\mathcal{K}_r}$ and the induction is completed.

 (ii) Suppose lines 6-8 are executed infinitely often, and let \((k_j)_{j \geq 0}\) be the ordered sequence of indices for which this happens. This implies that \((\alpha_{k_j} - \alpha) \|{\h z}^{k_j + 1}\| > \varepsilon\), and therefore \(\alpha_{k_j} - \alpha > \frac{\varepsilon}{{\ell_{g,\mathcal{K}_r}}} > 0\) for all $j \geq 0$. This is a contradiction to \(\lim_{k_j \to +\infty} (\alpha_{k_j} - \alpha) = 0\).

 (iii) According to (ii), there exists $K_0 \geq 0$ such that $(\alpha_k-\alpha)\norm{{\h z}^{k+1}}<\varepsilon$ for all $k\geq K_0$. The statements follow by defining $\widetilde{\alpha}:=\alpha_{K_0}$.
  \end{proof}
For $\widetilde{\alpha}$ the constant given by Theorem \ref{descent_Adaptive} (iii), we consider the following energy function
\begin{align}\label{varfun}
{\varpi}({\h x}, {\h y}, {\h z}, {\h u}) := & \ \Psi({\h x}, {\h y}, {\h z}) + \frac{\alpha}{2} \left( 1 + \frac{4\alpha}{\widetilde{\alpha} - \alpha} \right) \|{\h z} - {\h u}\|^2 - \frac{1}{2} (\widetilde{\alpha} - \alpha) \|{\h z}\|^2 \nonumber\\
= & \ \langle{\h z}, A{\h x} \rangle - g^*({\h z}) +\varphi({\h x}) + { \iota_S({\h x})} - \langle K{\h x},{\h y}\rangle +f^*({\h y})\\
& + \frac{\alpha}{2} \left( 1 + \frac{4\alpha}{\widetilde{\alpha} - \alpha} \right) \|{\h z} - {\h u}\|^2 - \frac{1}{2} (\widetilde{\alpha} - \alpha) \|{\h z}\|^2.\nonumber
\end{align}
Let $({\h x}^k, {\h y}^k, {\h z}^k )_{k\geq 0}$ be the sequence generated by Algorithm \ref{AdaptiveDPFS}.  For brevity we denote for $k\ge K_0+2$, where $K_0 \geq 0$ is the index also given by Theorem \ref{descent_Adaptive} (iii),
\begin{eqnarray*}
{{\varpi}}_{k}:={\varpi}({\h x}^{k},{\h y}^{k},{\h z}^{k},{\h z}^{k-1}).
\end{eqnarray*}

The following theorem establishes the descent property of Algorithm \ref{AdaptiveDPFS}.

 \begin{theorem}\label{Algo1:conv}
Suppose that  Assumption \ref{ass1} holds and  Assumption \ref{ass:A(S)} is fulfilled with a pair of $r>0$ and $\ell_{g,\mathcal{K}_r}>0.$ For $0<\gamma\le\frac{{2r}}{ \ell_{g,\mathcal{K}_r}}$ and $\norm{{\h z}^0} \leq  \ell_{g,\mathcal{K}_r}$, let $({\h x}^k, {\h y}^k, {\h z}^k )_{k\geq 0}$ be the sequence generated by Algorithm \ref{AdaptiveDPFS}, where the scalar sequences $(\alpha_{k})_{k \geq 0}$, $(\beta_{k})_{k \geq 0}$, $(\delta_{k})_{k \geq 0}$ and $(\rho_{k})_{k \geq 0}$ are chosen such that \eqref{scalar}-\eqref{rhocond} are satisfied. Let $K_0 \geq 0$ and ${\widetilde \alpha}>\alpha>0$ be as given by Theorem \ref{descent_Adaptive} (iii). Then, the following statements are true:
\begin{itemize}
\item[(i)] For all $k\ge K_0+2$ it holds
 \begin{eqnarray}\label{descent2}
 &&{{\varpi}}_{k+1}\le {{\varpi}}_{k}-c_1\|{\h x}^k-{\h x}^{k+1}\|^2
 -c_2\|{\h y}^k-{\h y}^{k+1}\|^2
-c_3\|{\h z}^{k+1}-{\h z}^{k}\|^2,\nn\\
\end{eqnarray}
where $c_1,\;c_2,\;c_3>0$.
\item[(ii)] There exists $\underline{\varpi} \in \R$ such that
$  \lim\limits_{k \to +\infty} {{\varpi}}_{k}=\underline{\varpi}.$
\item[(iii)] The sequences of discrete velocities vanish, i.e.,
\begin{eqnarray*} \mathop{\lim}\limits_{k \to +\infty} \|{\h x}^k-{\h x}^{k+1}\|=0, \quad \mathop{\lim}\limits_{k \to +\infty} \|{\h y}^k-{\h y}^{k+1}\|=0, \quad \mathop{\lim}\limits_{k \to +\infty} \|{\h z}^k-{\h z}^{k+1}\|=0.\end{eqnarray*}
\item[(iv)] If ${{{\rm int}(\dom f)}} \neq \emptyset$, then the sequence $( {\h x}^k,{\h y}^k, {\h z}^k)_{k\geq 0}$ is bounded.
\end{itemize}
 \end{theorem}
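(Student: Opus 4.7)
The plan is to reduce (i)--(iii) to the conceptual bound of Theorem \ref{PriTheo} applied on the tail where the adaptive step sizes have stabilized, while (iv) will require an additional argument exploiting the structure of the $\h y$--update and the local Lipschitz property of $f$.

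For (i), I would first observe that by Theorem \ref{descent_Adaptive} (iii), for all $k\ge K_0+2$ we have $\alpha_{k-2}=\alpha_{k-1}=\alpha_k=\alpha_{k+1}=\widetilde\alpha$, so that the error term $\omega_k$ in \eqref{eq:def_omega_k} vanishes identically, and the discrete energy $\psi_k$ from \eqref{Lfun2} collapses algebraically to exactly the expression defining ${\varpi}_k$ (since $\alpha(\tfrac12+\tfrac{2\alpha}{\widetilde\alpha-\alpha})=\tfrac{\alpha}{2}(1+\tfrac{4\alpha}{\widetilde\alpha-\alpha})$ and the coefficient $\tfrac12(\alpha_{k-2}-\alpha)-2\alpha\tfrac{\alpha_{k-1}-\alpha_{k-2}}{\alpha_{k-1}-\alpha}$ reduces to $\tfrac12(\widetilde\alpha-\alpha)$). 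Plugging this into the descent inequality \eqref{descent} of Theorem \ref{PriTheo} gives \eqref{descent2} with the constants $c_1=\mu$, $c_2=\underline\beta$, and $c_3=\alpha$, all strictly positive in view of \eqref{rhocond}, \eqref{betacond}, and the choice $\alpha<\gamma/2$ (the cross-step ratio $(\alpha_k-\alpha_{k+1})/((\alpha_{k+1}-\alpha)(\alpha_k-\alpha))$ is zero on the constant tail).

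For (ii) and (iii), I would mimic the argument of Theorem \ref{subsequentialtoStat}: boundedness of $({\varpi}_k)_{k\ge K_0+2}$ from below is obtained by using Theorem \ref{descent_Adaptive} (i) to control $\|\h z^k\|\le \ell_{g,\mathcal K_r}$, then bounding $\Psi(\h x^k,\h y^k,\h z^k)$ below by $\underline F$ minus a constant depending on $\sup_{\h x\in S}\{g(A\h x)+\ell_{g,\mathcal K_r}\|A\h x\|\}$, $g^*(\h z^{K_0+1})$, and $\ell_{g,\mathcal K_r}$, exactly as in \eqref{eq:F_is_bounded2}. Once \eqref{descent2} is combined with this lower bound, Lemma \ref{ak} (with $b_k=c_1\|\h x^k-\h x^{k+1}\|^2+c_2\|\h y^k-\h y^{k+1}\|^2+c_3\|\h z^k-\h z^{k+1}\|^2$ and $\varepsilon_k\equiv0$) yields both the convergence $\lim_k{\varpi}_k=\underline{\varpi}\in\R$ and the summability of the three squared velocities, from which (iii) follows.

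The main obstacle is (iv). The $\h x$-iterates lie in the compact set $S$, and the $\h z$-iterates are bounded by Theorem \ref{descent_Adaptive} (i); the nontrivial point is the boundedness of $(\h y^k)$. The strategy is to use the optimality condition for the $\h y$-update to extract
\[
\h y^{k+1}\in\partial f\bigl(K\h x^{k+1}+\beta_k(\h y^k-\h y^{k+1})\bigr)\quad\forall k\ge 0.
\]
Since $K(S)$ is compact and, under $\mathrm{int}(\dom f)\neq\emptyset$, Assumption \ref{ass:A(S)} together with Remark \ref{rem:equiv_Kr_int} (applied to $f$) gives $K(S)\subseteq\mathrm{int}(\dom f)$, there exists $r'>0$ with $\mathcal{N}_{r'}:=\{\h w\in\R^p:\dist(\h w,K(S))\le r'\}\subseteq\mathrm{int}(\dom f)$, and $f$ is Lipschitz on $\mathcal{N}_{r'}$ with some constant $\ell_{f,\mathcal{N}_{r'}}$. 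Because $(\beta_k)_{k\ge 0}$ is bounded above by $\overline\beta$ by \eqref{betacond} and $\|\h y^k-\h y^{k+1}\|\to 0$ by (iii), for $k$ large enough the argument $K\h x^{k+1}+\beta_k(\h y^k-\h y^{k+1})$ lies in $\mathcal{N}_{r'}$, and consequently $\|\h y^{k+1}\|\le \ell_{f,\mathcal{N}_{r'}}$ for all such $k$. The finitely many remaining iterates contribute only a finite perturbation, so $(\h y^k)_{k\ge 0}$ is bounded, completing the proof of (iv).
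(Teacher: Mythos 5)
Your proposal is correct and follows essentially the same route as the paper: (i) via Theorem \ref{PriTheo} restricted to the constant-step tail where $\omega_k=0$ and $\psi_k$ coincides with $\varpi_k$, (ii)--(iii) via the lower bound from Theorem \ref{subsequentialtoStat} (i) and Lemma \ref{ak}, and (iv) via the inclusion $\h y^{k+1}\in\partial f\bigl(K\h x^{k+1}+\beta_k(\h y^k-\h y^{k+1})\bigr)$ combined with local boundedness of the subdifferential on $\mathrm{int}(\dom f)$. The only cosmetic difference is your constant $c_3=\alpha$, which is the exact value of $c_3^k$ on the stabilized tail, whereas the paper uses the uniform lower bound $c_3=\alpha(1-2\alpha/\gamma)$; both are strictly positive and either works.
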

 \begin{proof}(i) By combining Theorem \ref{PriTheo} with Theorem \ref{descent_Adaptive}, and
 setting $c_1=\mu$, $c_2=\underline{\beta}$ and $c_3={\alpha(1-\frac{2\alpha}{\gamma})}$, the conclusion follows directly from \eqref{descent}, since $\omega_k = 0$ for $k\geq K_0+2$.

 (ii)-(iii) The sequence of $({{\varpi}}_{k})_{k\ge K_0+2}$ is lower bounded by a similar argument as in the proof of Theorem \ref{subsequentialtoStat} (i). The existence of the limit and the vanishing of the discrete velocities follow by Lemma \ref{ak}.

(iv) Since $S$ is a nonempty and compact set,  we deduce that $({\h x}^k)_{k\geq 0}$ is bounded. From line 3 of Adaptive DFPS,  we see that for all $k \geq 0$
    \begin{align*}
        {{\h y}}^{k+1} &\in \partial f(K {{\h x}}^{k+1} + \beta_k ({{\h y}}^{k}-{{\h y}}^{k+1}) ).
    \end{align*}
Since ${\rm int}(\dom f)$ is nonempty and $K(S) \subseteq {\rm ri}(\dom f) = \inte(\dom f)$, there exists $\delta>0$ such that $\{{\h y} \in \R^p: \dist({\h y},K(S)) \leq \delta \} \subseteq {\rm int}(\dom f)$.  Since $\|{\h y}^k-{\h y}^{k+1}\| \to 0$ as $k \to +\infty$ and $(\beta_k)_{k \geq 0}$ is a bounded sequence, and the convex subdifferential is locally bounded in the interior of the domain, it follows that $({\h y}^k)_{k\geq 0}$ is bounded. The boundedness of $({{\h z}^{k}})_{k \geq 0}$ follows by Theorem \ref{descent_Adaptive} (iii).
 \end{proof}

\begin{theorem}\label{descent_Adaptivesol} (Subsequential convergence of Adaptive DPFS) 
Suppose that  Assumption \ref{ass1} holds and  Assumption \ref{ass:A(S)} is fulfilled with a pair of $r>0$ and $\ell_{g,\mathcal{K}_r}>0.$ For $0<\gamma\le\frac{{2r}}{ \ell_{g,\mathcal{K}_r}}$ and $\norm{{\h z}^0} \leq  \ell_{g,\mathcal{K}_r}$, let $({\h x}^k, {\h y}^k, {\h z}^k )_{k\geq 0}$ be the sequence generated by Algorithm \ref{AdaptiveDPFS}, where the scalar sequences $(\alpha_{k})_{k \geq 0}$, $(\beta_{k})_{k \geq 0}$, $(\delta_{k})_{k \geq 0}$ and $(\rho_{k})_{k \geq 0}$ are chosen such that \eqref{scalar}-\eqref{rhocond} are satisfied. Let $0<\varepsilon\leq r$ and $\ell_{g,\mathcal{K}_{\varepsilon}}>0$ be the Lipschitz constant of $g$ on the compact set $\mathcal{K}_{\varepsilon} =  \{{\h z} \in \R^s : \dist({\h z},A(S))\leq \varepsilon \}\subseteq{\text{\rm int}}(\dom g)$. Then, every accumulation point of the sequence $({\h x}^k,{\h y}^k,{\h z}^k)_{k\ge 0}$ is a $(2\ell_{g,\mathcal{K}_{\varepsilon}}\varepsilon)$-approximate KKT point of (\ref{PForm}).
\end{theorem}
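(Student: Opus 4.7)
The plan is to extract each of the three conditions of Definition~\ref{appsol} by passing to the limit along a convergent subsequence of iterates, relying on the vanishing discrete velocities from Theorem~\ref{Algo1:conv}(iii) and on the stabilization of the step-size parameters supplied by Theorem~\ref{descent_Adaptive}(iii). Fix an accumulation point $(\overline{\h x},\overline{\h y},\overline{\h z})$ with $(\h x^{k_j+1},\h y^{k_j+1},\h z^{k_j+1})\to(\overline{\h x},\overline{\h y},\overline{\h z})$ along a subsequence of indices $k_j\ge K_0$; the vanishing of the velocities forces $(\h x^{k_j},\h y^{k_j},\h z^{k_j})$ to converge to the same limit.

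The second and third KKT conditions follow by the standard closed-graph argument. The optimality condition of the proximal step in line~4 reads $K\h x^{k+1}-\beta_k(\h y^{k+1}-\h y^k)\in\partial f^*(\h y^{k+1})$, and since $(\beta_k)$ is bounded by \eqref{betacond}, $\|\h y^{k+1}-\h y^k\|\to 0$, and $\partial f^*$ has closed graph, the limit yields $K\overline{\h x}\in\partial f^*(\overline{\h y})$. Similarly, the projection step in line~3 is equivalent to
\begin{equation*}
K^*\h y^k - A^*\h z^k - \nabla\varphi(\h x^k) - \rho_k(\h x^{k+1}-\h x^k) \in N_S(\h x^{k+1});
\end{equation*}
since $\rho_k$ equals the constant $\widetilde\rho$ from $K_0$ onwards, $\|\h x^{k+1}-\h x^k\|\to 0$, $\nabla\varphi$ is continuous, and $N_S$ has closed graph, passing to the limit delivers $K^*\overline{\h y}\in A^*\overline{\h z}+\nabla\varphi(\overline{\h x})+N_S(\overline{\h x})$.

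The first condition $A\overline{\h x}\in\partial_{\widehat\varepsilon}g^*(\overline{\h z})$ is the delicate one, and it is where the adaptive test in lines~5--11 is crucial. The optimality condition of the $\h z$-update in line~5, combined with $\alpha_k\delta_k=\alpha$, reads
\begin{equation*}
A\h x^{k+1} - \alpha(\h z^{k+1}-\h z^k) - (\alpha_k-\alpha)\h z^{k+1} \in \partial g^*(\h z^{k+1}).
\end{equation*}
For $k\ge K_0$ we have $\alpha_k=\widetilde\alpha$, so taking the limit along the subsequence (using closedness of the graph of $\partial g^*$) produces $A\overline{\h x}-(\widetilde\alpha-\alpha)\overline{\h z}\in\partial g^*(\overline{\h z})$, whence
\begin{equation*}
\mathrm{dist}\bigl(A\overline{\h x},\partial g^*(\overline{\h z})\bigr) \le (\widetilde\alpha-\alpha)\|\overline{\h z}\| \le \varepsilon,
\end{equation*}
the last inequality being the stopping test $(\alpha_k-\alpha)\|\h z^{k+1}\|\le\varepsilon$ that holds for every $k\ge K_0$.

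I close by invoking Lemma~\ref{epsappsol} with $\h w := A\overline{\h x}$, $\mathcal{O} := \mathcal{K}_\varepsilon$, and Lipschitz constant $\kappa := \ell_{g,\mathcal{K}_\varepsilon}$: the inclusion $B(A\overline{\h x},\varepsilon)\subseteq\mathcal{K}_\varepsilon\subseteq\inte(\dom g)$ follows from $A\overline{\h x}\in A(S)$ together with the definition of $\mathcal{K}_\varepsilon$ and $\varepsilon\le r$ (Assumption~\ref{ass:A(S)}). The lemma then delivers $\overline{\h z}\in\partial_{\widehat\varepsilon}g(A\overline{\h x})$ with $\widehat\varepsilon=2\ell_{g,\mathcal{K}_\varepsilon}\varepsilon$, which by the conjugate-duality characterization of $\varepsilon$-subdifferentials recalled in Section~\ref{sec:Preliminaries} (\emph{viz.}~$\overline{\h z}\in\partial_{\widehat\varepsilon}g(A\overline{\h x})\Leftrightarrow A\overline{\h x}\in\partial_{\widehat\varepsilon}g^*(\overline{\h z})$) is exactly the missing first KKT relation. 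The main technical obstacle I anticipate is verifying the hypothesis $\overline{\h z}\in\mathrm{ri}(\dom g^*)$ required by Lemma~\ref{epsappsol}; this should follow from the fact that $\partial g^*(\overline{\h z})$ has just been shown to be nonempty, but it may require a short convex-analytic refinement to rule out boundary points of $\dom g^*$.
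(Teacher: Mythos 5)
Your proof is correct and follows essentially the same route as the paper: pass to the limit in the three optimality conditions along a convergent subsequence (using the stabilized parameters from Theorem~\ref{descent_Adaptive}(iii) and the vanishing discrete velocities), extract $\dist\bigl(\partial g^*(\overline{\h z}), A\overline{\h x}\bigr)\le\varepsilon$ from the adaptive test, and conclude via Lemma~\ref{epsappsol} together with the $\varepsilon$-subdifferential duality. The hypothesis $\overline{\h z}\in\mathrm{ri}(\dom g^*)$ that you flag at the end is likewise left unverified in the paper's own application of Lemma~\ref{epsappsol} (it only establishes $\partial g^*(\overline{\h z})\neq\emptyset$), so this is a shared loose end rather than a gap specific to your argument.
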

\begin{proof} According to (iii) of Theorem \ref{descent_Adaptive}, there exists a $K_0 \geq 0$ such that  $\alpha_k\equiv {\widetilde\alpha}>\alpha>0$,
$\rho_k \equiv \rho$ and $\delta_k\equiv \alpha/{\widetilde\alpha}$ for all $k\ge K_0$.
Invoking the lines 2-4 in Adaptive DFPS, for all $k\ge K_0+2$ we have that
\begin{eqnarray}\label{algoKKT}\left\{\begin{array}{l}{\bf 0}\in  N_S({\h x}^{k+1})+ A^* {\h z}^k -K^* {\h y}^k +\nabla \varphi({\h x}^k)+\rho({\h x}^{k+1}-{\h x}^k),\\[0.1cm]
{\bf 0}\in \partial f^*({\h y}^{k+1}) -K{\h x}^{k+1} +\beta_k({\h y}^{k+1}-{\h y}^k),\\[0.1cm]
A{\h x}^{k+1} -{\widetilde\alpha}{\h z}^{k+1}+\alpha {\h z}^k\in\partial g^*({\h z}^{k+1}).\end{array}\right.
\end{eqnarray}
Let $(\overline{\h x},\overline{\h y},\overline{\h z})$ be an accumulation point of  $({\h x}^k,{\h y}^k,{\h z}^k)_{k\ge 0}$, and  $({\h x}^{k_j},{\h y}^{k_j},{\h z}^{k_j})_{j\ge 0}$ be a subsequence converging to $(\overline{\h x},\overline{\h y},\overline{\h z})$ as $j \to +\infty$ with
${\beta}_{k_j}\to {\beta}$ as $j \to +\infty$ (due to the boundedness of $(\beta_{k})_{k \ge 0}$). Using Theorem \ref{Algo1:conv} (iii), we have
\begin{eqnarray}\label{KKT} \left\{\begin{array}{l} {\bf 0}\in N_S(\overline{\h x}) +A^* \overline{\h z} -K^* \overline{\h y} +\nabla \varphi(\overline{\h x}),\\[0.1cm]
K{\overline{\h x}}\in \partial f^*(\overline{\h y}),\\[0.1cm]
A\overline{\h x} -({\widetilde\alpha}-\alpha)\overline{\h z}\in\partial g^*(\overline{\h z}).\end{array}\right.
\end{eqnarray}
By Theorem \ref{descent_Adaptive} (ii), we have that
$(\alpha_k-\alpha)\|{\h z}^{k+1}\| < \varepsilon$ for all $k\ge K_0$. This yields
$({\widetilde \alpha}-\alpha)\|{\overline{\h z}}\|\le \varepsilon.$
It follows from the third inclusion in (\ref{KKT}) and the above inequality that
$${\text{dist}}(\partial g^*({\overline{\h z}}), A{\overline{\h x}})\le \varepsilon.$$
From here, by Lemma \ref{epsappsol}, we have that
$\overline{\h z}\in \partial_{\widehat\varepsilon} g(A\overline{\h x})$ with
${\widehat\varepsilon}=2{  \ell_{g,\mathcal{K}_{\varepsilon}}} \varepsilon$ and thus by \eqref{KKT} we conclude the result.
 \end{proof}
Let $\Omega({\h x}^0, {\h y}^0, {\h z}^0)$ denote the set of accumulation points of $({\h x}^k, {\h y}^k, {\h z}^k)_{k \geq 0}$.
\begin{lemma}\label{constantO}
 Under the same assumptions as in Theorem \ref{Algo1:conv}, let $({\h x}^k, {\h y}^k, {\h z}^k)_{k \geq 0}$ be the sequence generated by Algorithm \ref{AdaptiveDPFS}. Additionally, assume ${\text{\rm int}}(\dom f) \neq \emptyset$. Then, the following statements are true:
\begin{itemize}
\item[(i)] $\Omega({\h x}^0, {\h y}^0, {\h z}^0)$ is a nonempty, compact and connected set;
 \item[(ii)] For every $({\overline{\h x}},{\overline{\h y}},{\overline{\h z}})\in\Omega({\h x}^0, {\h y}^0, {\h z}^0)$, it holds ${{\varpi}}({\overline{\h x}},{\overline{\h y}},{\overline{\h z}},{\overline{\h z}}) = \underline{\varpi}$.
 \item[(iii)] It holds $\dist \big(({\h x}^k, {\h y}^k, {\h z}^k), \Omega({\h x}^0, {\h y}^0, {\h z}^0)\big) \to 0$ as $k\to +\infty$.
\end{itemize}
\end{lemma}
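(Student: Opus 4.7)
The plan is to establish the three assertions in sequence, with (ii) being the main technical point because the energy function $\varpi$ contains the lower semicontinuous conjugates $g^*$ and $f^*$ rather than continuous functions.

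For (i), nonemptiness is immediate from Theorem \ref{Algo1:conv} (iv), which guarantees the boundedness of $({\h x}^k,{\h y}^k,{\h z}^k)_{k\geq 0}$ under the extra assumption $\inte(\dom f) \neq \emptyset$. Compactness follows by writing
\[
\Omega({\h x}^0,{\h y}^0,{\h z}^0) \;=\; \bigcap_{q\geq 0}\overline{\{({\h x}^k,{\h y}^k,{\h z}^k):k\geq q\}},
\]
an intersection of closed subsets of a compact set. Connectedness follows from a classical Ostrowski-type argument: the boundedness together with $\|({\h x}^{k+1},{\h y}^{k+1},{\h z}^{k+1})-({\h x}^k,{\h y}^k,{\h z}^k)\|\to 0$, which we have from Theorem \ref{Algo1:conv} (iii), implies that $\Omega$ is connected.

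For (ii), fix $(\overline{\h x},\overline{\h y},\overline{\h z})\in\Omega$ and a subsequence $({\h x}^{k_j},{\h y}^{k_j},{\h z}^{k_j})\to(\overline{\h x},\overline{\h y},\overline{\h z})$. By Theorem \ref{Algo1:conv} (iii) the shifted subsequence $({\h x}^{k_j+1},{\h y}^{k_j+1},{\h z}^{k_j+1})$ converges to the same limit, and by Theorem \ref{descent_Adaptive} (iii) we have $\alpha_{k_j}=\widetilde\alpha$ and $\delta_{k_j}=\alpha/\widetilde\alpha$ for $j$ large enough. The obstacle is that $g^*$ and $f^*$ are only lower semicontinuous, so continuity along the subsequence is not automatic. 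To overcome this, I would use the optimality characterization of the ${\h y}$-subproblem: testing against $\overline{\h y}$ yields
\[
f^*({\h y}^{k_j+1}) - \langle K{\h x}^{k_j+1},{\h y}^{k_j+1}\rangle + \tfrac{\beta_{k_j}}{2}\|{\h y}^{k_j+1}-{\h y}^{k_j}\|^2 \leq f^*(\overline{\h y}) - \langle K{\h x}^{k_j+1},\overline{\h y}\rangle + \tfrac{\beta_{k_j}}{2}\|\overline{\h y}-{\h y}^{k_j}\|^2.
\]
Passing to $\limsup$, since $(\beta_{k_j})$ is bounded and $\|{\h y}^{k_j+1}-{\h y}^{k_j}\|\to 0$, one obtains $\limsup_j f^*({\h y}^{k_j+1}) \leq f^*(\overline{\h y})$, and combined with the lsc of $f^*$ this gives $\lim_j f^*({\h y}^{k_j+1}) = f^*(\overline{\h y})$. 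The analogous argument applied to the ${\h z}$-subproblem yields $\lim_j g^*({\h z}^{k_j+1}) = g^*(\overline{\h z})$. All remaining terms in $\varpi$ — the bilinear pairings, the smooth $\varphi$, the squared norms, and $\iota_S({\h x}^{k_j+1})=0$ since ${\h x}^{k_j+1}\in S$ — are continuous along the subsequence. Hence $\varpi({\h x}^{k_j+1},{\h y}^{k_j+1},{\h z}^{k_j+1},{\h z}^{k_j})\to \varpi(\overline{\h x},\overline{\h y},\overline{\h z},\overline{\h z})$, and by Theorem \ref{Algo1:conv} (ii) the left-hand side tends to $\underline{\varpi}$, establishing the claim.

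For (iii), I would argue by contradiction: if the distance does not vanish, there would exist $\eta>0$ and a subsequence with $\dist(({\h x}^{k_j},{\h y}^{k_j},{\h z}^{k_j}),\Omega)\geq\eta$ for all $j$. By boundedness (Theorem \ref{Algo1:conv} (iv)), a further subsequence converges to a point which by definition lies in $\Omega$, contradicting the distance bound. The main obstacle in the proof is the argument in (ii) for the continuity of $g^*$ and $f^*$ along the converging subsequences; once this is handled via the subproblem optimality as above, the rest is essentially routine.
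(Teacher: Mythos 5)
Your proposal is correct and follows essentially the same route as the paper: the key step in (ii) --- testing the ${\h y}$- and ${\h z}$-subproblem optimality against the accumulation point to compensate for the mere lower semicontinuity of $f^*$ and $g^*$ --- is exactly the paper's argument, merely reorganized so that you conclude $\lim_j f^*({\h y}^{k_j+1})=f^*(\overline{\h y})$ and $\lim_j g^*({\h z}^{k_j+1})=g^*(\overline{\h z})$ separately instead of deriving the two one-sided inequalities for $\varpi$ directly. Parts (i) and (iii) coincide with the standard arguments the paper invokes (by citation for (i) and as a one-line remark for (iii)).
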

\begin{proof} According to Theorem \ref{Algo1:conv} {(iv)}, the sequence $( {\h x}^k,{\h y}^k, {\h z}^k)_{k\geq 0}$ is bounded.

(i) The proof can be carried out in a similar manner to that of \cite[Lemma 5 (iii)]{BST14}.

(ii) Let $({\overline{\h x}},{\overline{\h y}},{\overline{\h z}})\in\Omega({\h x}^0, {\h y}^0, {\h z}^0)$ and consider the subsequence $( {\h x}^{k_j},{\h y}^{k_j}, {\h z}^{k_j})_{j \geq 0}$ with the property that
$\lim_{j\to +\infty} ( {\h x}^{k_j},{\h y}^{k_j}, {\h z}^{k_j}) = ({\overline{\h x}},{\overline{\h y}},{\overline{\h z}}).$
From Theorem \ref{Algo1:conv} we have that
$\lim_{j\to+\infty}{{{{{\varpi}}}}}_{k_j}= \lim_{j\to+\infty}{{{{\varpi}}}}({\h x}^{k_j},{\h y}^{k_j},{\h z}^{k_j},{\h z}^{{k_j}-1})=\underline{{{{\varpi}}}}$.

Using the update in line $3$ of the algorithm,  yields for all $k_j \geq K_0+2$
\begin{align*}
  \varpi_{k_j+1} +\frac{\beta_{k_j}}{2}\|{\h y}^{k_j+1}-{\h y}^{k_j}\|^2\leq \,\varpi({\h x}^{k_j+1},\overline{\h y},{\h z}^{k_j+1},{\h z}^{{k_j}})+\frac{\beta_{k_j}}{2}\|{\overline{\h y}}-{\h y}^{k_j}\|^2.
\end{align*}
Since ${\h x}^{k_j}\in S$ for all $k_j \geq K_0+2$ and the function $g^*$ is lower semicontinuous, Theorem \ref{Algo1:conv} (iii) and condition (\ref{betacond}) imply that
\begin{align*}
    \underline{\varpi} \leq \limsup_{j \to +\infty }\left( \varpi({\h x}^{k_j+1},\overline{\h y},{\h z}^{k_j+1},{\h z}^{{k_j}})+\frac{\beta_{k_j}}{2}\|{\overline{\h y}}-{\h y}^{k_j}\|^2 \right)\leq \varpi(\overline{\h x}, \overline{\h y}, \overline{\h z}, \overline{\h z}).
\end{align*}

Using the update in line $4$ of the algorithm,  yields for all $k_j \geq K_0+2$
\begin{align*}\Psi({\h x}^{k_j+1},{\h y}^{k_j+1},{\h z}^{k_j+1})- \frac{\widetilde\alpha}{2}\left\|{\h z}^{k_j+1}-\frac{\alpha}{\widetilde \alpha} {\h z}^{k_j}\right\|^2
 \ge  \Psi({\h x}^{k_j+1},{\h y}^{k_j+1},\overline{\h z})-  \frac{\widetilde\alpha}{2}\left\|{\overline{\h z}}-\frac{\alpha}{\widetilde \alpha} {\h z}^{k_j}\right\|^2
 \end{align*}
or, equivalently,
\begin{align*}
    & \ \Psi({\h x}^{k_j+1},{\h y}^{k_j+1},{\h z}^{k_j+1}) -\frac{\alpha}{2}\norm{{\h z}^{k_j+1}- {\h z}^{k_j}}^2-\frac{\widetilde{\alpha}-\alpha}{2}\norm{{\h z}^{k_j+1}}^2\\
 \ge & \ \Psi({\h x}^{k_j+1},{\h y}^{k_j+1},\overline{\h z})-\frac{\alpha}{2}\norm{\overline{\h z}- {\h z}^{k_j}}^2-\frac{\widetilde{\alpha}-\alpha}{2}\norm{\overline{\h z}}^2.
\end{align*}
Since  ${\h x}^{k_j}\in S$ for all $k_j \geq K_0+2$ and $f^*$ is lower semicontinuous, Theorem \ref{Algo1:conv} (iii) leads to
\begin{align*}
    \underline{\varpi} \geq \liminf_{j\to +\infty} \left( \Psi({\h x}^{k_j+1},{\h y}^{k_j+1},\overline{\h z})-\frac{\alpha}{2}\norm{\overline{\h z}- {\h z}^{k_j}}^2-\frac{\widetilde{\alpha}-\alpha}{2}\norm{\overline{\h z}}^2\right) \geq \varpi(\overline{\h x}, \overline{\h y}, \overline{\h z}, \overline{\h z}).
\end{align*}
From here, (ii) follows immediately. 

(iii) This is a direct consequence of the fact that every bounded sequence approaches its set of accumulation points.
\end{proof}

\section{Global convergence under the Kurdyka-\L ojasiewicz property}\label{sec:GlobalConv}

In this section, we discuss the convergence of the sequence $({\h x}^k, {\h y}^k, {\h z}^k)_{k \geq 0}$ generated by Algorithm \ref{AdaptiveDPFS} in the framework of the Kurdyka-\L ojasiewicz (KL) property.

The framework in which we are going to do this is set out in the following assumption.

\begin{ass}\label{ass2} Assume that \( g \) \textit{essentially strictly convex} and  ${\text{\rm int}}(\dom f) \neq \emptyset$. \end{ass}

We state the definition of the KL property.

\begin{definition} \label{def:KL}
A proper function $h: \mathbb R^n\rightarrow {\overline{\mathbb R}}$ is said to satisfy the Kurdyka-\L ojasiewicz (KL) property at a point ${\widehat{\h x}}\in {\text{\rm dom }}\partial h$ if there exist a constant $\eta\in(0,+\infty]$, an open neighborhood
	$U$ of ${\widehat{\h x}},$ and a concave and continuous function $\phi:\;[0,\eta)\rightarrow[0,+\infty)$ with the properties
	\begin{itemize}
        \item[(i)] $\phi(0)=0$;
		\item[(ii)] $\phi$ is continuously differentiable on $(0,\eta)$ and $\phi'{(\cdot)}>0$ on $(0,\eta)$;
		\item[(iii)] for every $\h x\in U$ with $h({\widehat{\h x}})< h(\h x)< h(\widehat{\h x}) +\eta$, it holds
		\begin{eqnarray*}\phi'(h(\h x)-h(\widehat{\h x})){\text{\rm{dist}}}(\h 0,\partial h({\h x}))\ge 1. \end{eqnarray*}
	\end{itemize}
The class of so-called desingularization functions $\phi:[0,\eta)\rightarrow[0,+\infty)$ fulfilling (i)-(iii) is denoted by $\Phi_\eta$. 
\end{definition}

\begin{remark} \label{rem:local_lsc}
The classical Definition~\ref{def:KL} of the Kurdyka--Łojasiewicz (KL) property assumes the function $h$ is \emph{proper and lower semicontinuous} on the whole
space. In the subsequent analysis we would like to assume that the merit function $\varpi$ satisfies the KL property, however, since it contains the concave term
$-g^{\ast}$, it can fail to be everywhere lower semicontinuous. On the other hand, by Assumption~\ref{ass2}, $-g^{\ast}$ is differentiable on
$\mathrm{int}(\dom g^*)=\dom\partial g^{\ast}$, therefore,
\begin{align*} 
& \ \varpi \ \text{is lower semicontinuous on } \R^n \times \R^p \times \mathrm{int}(\dom g^*) \times \R^s \\ \mbox{and} & \ \dom\partial\varpi\subseteq \R^n \times \R^p \times \mathrm{int}(\dom g^*) \times \R^s.
\end{align*}
Therefore, when assuming in the following that $\varpi$ satisfies the KL property at a point $\widehat {\h w}:=(\widehat{\h x},\widehat{\h y},\widehat{\h z},\widehat{\h u}) \in \dom\partial\varpi$, in the following we will restrict the open neighborhood $U$ of $\widehat{\h w} $ such that $U \subseteq \R^n \times \R^p \times \mathrm{int}(\dom g^*) \times \R^s$, on which $\varpi$ is lower semicontinuous.
\end{remark}

The class of functions that satisfy the KL includes semialgebraic, real subanalytic, uniformly convex functions and convex functions satisfying a growth condition (\cite{ABRS,ABS,BST14, BCN19}).

The following lemma can be found in \cite{BST14} and provides a uniform version of the KL property on a compact set.

\begin{lemma}\label{uniform} Let $\Omega$ be a compact set and $h:\mathbb R^n \rightarrow {\overline{\mathbb R}}$ a proper function. Assume that $h$ is constant on $\Omega$, lower semicontinuous on an open set containing $\Omega$, and it satisfies the KL property at each point $\Omega$. Then, there exist $\varrho>0$ and $\eta >0$,
$\phi\in\Phi_{\eta}$ such that for every $\widehat{\h x}\in \Omega$ and
every element ${\h x}$ in the intersection
\begin{eqnarray*}\{\h x \in \mathbb R^n :{\text{\rm dist}}({\h x},\Omega)<\varrho\}\cap\{{\h x} \in \mathbb R^n: h(\widehat{\h x})<h({\h x})<h(\widehat{\h x})+\eta\} \end{eqnarray*}
it holds
\begin{eqnarray*} \phi'(h({\h x})-h(\widehat{\h x})){\text{\rm dist}}({\h 0},\partial {h}(\h x))\ge 1.\end{eqnarray*}
\end{lemma}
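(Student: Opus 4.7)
The plan is to combine the pointwise KL property with the compactness of $\Omega$ via a finite-cover argument, exploiting the fact that $h$ takes the same value on all of $\Omega$ so that all local KL inequalities share a common reference level.

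First, since $h$ is constant on $\Omega$, denote this common value by $c := h(\widehat{\h x})$ for any $\widehat{\h x}\in\Omega$. For each $\widehat{\h x}\in\Omega$, since $h$ satisfies the KL property at $\widehat{\h x}$, there exist $\eta_{\widehat{\h x}}>0$, an open ball $B(\widehat{\h x},\varepsilon_{\widehat{\h x}})$ (which we can take to lie inside the open set on which $h$ is lower semicontinuous), and $\phi_{\widehat{\h x}}\in\Phi_{\eta_{\widehat{\h x}}}$ such that $\phi_{\widehat{\h x}}'(h({\h x})-c)\,\dist(\h 0,\partial h({\h x}))\ge 1$ for every ${\h x}\in B(\widehat{\h x},\varepsilon_{\widehat{\h x}})$ with $c<h({\h x})<c+\eta_{\widehat{\h x}}$.

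Next, I extract a finite subcover. Since $\Omega$ is compact and $\Omega\subseteq\bigcup_{\widehat{\h x}\in\Omega}B(\widehat{\h x},\varepsilon_{\widehat{\h x}}/2)$, there are finitely many points $\widehat{\h x}_1,\dots,\widehat{\h x}_N\in\Omega$ such that the balls $B(\widehat{\h x}_i,\varepsilon_i/2)$ (with $\varepsilon_i:=\varepsilon_{\widehat{\h x}_i}$) already cover $\Omega$. Define
\begin{equation*}
\varrho := \tfrac{1}{2}\min_{i=1,\dots,N}\varepsilon_i, \qquad \eta := \min_{i=1,\dots,N}\eta_{\widehat{\h x}_i}>0, \qquad \phi := \sum_{i=1}^{N}\phi_{\widehat{\h x}_i}.
\end{equation*}
Then $\phi:[0,\eta)\to[0,+\infty)$ is concave, continuous, satisfies $\phi(0)=0$, is continuously differentiable on $(0,\eta)$ with $\phi'(s)=\sum_{i=1}^N\phi'_{\widehat{\h x}_i}(s)>0$ there, so $\phi\in\Phi_{\eta}$.

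Now I verify the claimed uniform inequality. Take any $\widehat{\h x}\in\Omega$ and any ${\h x}$ with $\dist({\h x},\Omega)<\varrho$ and $c<h({\h x})<c+\eta$. Choose $\widetilde{\h x}\in\Omega$ with $\|{\h x}-\widetilde{\h x}\|<\varrho$, and pick $i$ with $\widetilde{\h x}\in B(\widehat{\h x}_i,\varepsilon_i/2)$. Then $\|{\h x}-\widehat{\h x}_i\|<\varepsilon_i/2+\varrho\le\varepsilon_i$, so ${\h x}\in B(\widehat{\h x}_i,\varepsilon_i)$; moreover $c<h({\h x})<c+\eta\le c+\eta_{\widehat{\h x}_i}$. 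The local KL inequality at $\widehat{\h x}_i$ therefore yields $\phi'_{\widehat{\h x}_i}(h({\h x})-c)\,\dist(\h 0,\partial h({\h x}))\ge 1$, and since $\phi'(s)\ge\phi'_{\widehat{\h x}_i}(s)$ for $s\in(0,\eta)$, we conclude
\begin{equation*}
\phi'(h({\h x})-c)\,\dist(\h 0,\partial h({\h x}))\ge 1,
\end{equation*}
which is the asserted uniform KL inequality (note that $h(\widehat{\h x})=c$).

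The only nontrivial point is the construction of a single desingularizer $\phi\in\Phi_{\eta}$ that simultaneously handles all local estimates; the summation trick works precisely because $\Phi_{\eta}$ is closed under finite sums and because $\phi'$ dominates each $\phi'_{\widehat{\h x}_i}$ pointwise on $(0,\eta)$. The Lebesgue-number-style choice of $\varrho$ then transfers the local balls into a uniform tube around the compact set $\Omega$.
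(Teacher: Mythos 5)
Your proof is correct and is essentially the standard argument: the paper itself gives no proof but cites \cite{BST14}, where this uniformization is established by exactly your finite-cover construction with the summed desingularizing function $\phi=\sum_i \phi_{\widehat{\h x}_i}$ and the half-radius/triangle-inequality choice of $\varrho$. No gaps; the verification that $\phi\in\Phi_\eta$ and that $\phi'$ dominates each $\phi'_{\widehat{\h x}_i}$ is handled correctly.
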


As we already established in Theorem \ref{Algo1:conv}, if the assumptions \ref{ass1} and \ref{ass:A(S)} hold and ${\text{\rm int}}(\dom f) \neq \emptyset$,  then the sequence $({\h x}^k,{\h y}^k,{\h z}^k)_{k\ge 0}$ is bounded.

 \begin{theorem}\label{residual}
 \!\! Under the same assumptions as in Theorem \ref{Algo1:conv}, let \!$({\h x}^k, {\h y}^k, {\h z}^k)_{k \geq 0}$ be the sequence generated by Algorithm \ref{AdaptiveDPFS}. In addition, let Assumption \ref{ass2} hold.
 Then there exists  a constant $\zeta>0$ such that for all $k\ge K_0+2$
\begin{align*} & \ {\text{\rm dist}} \Big (\h 0,\partial{\varpi}  ({\h x}^{k+1},{\h y}^{k+1},{\h z}^{k+1},{\h z}^k) \Big )\\
\le & \ \zeta(\|{\h x}^k-{\h x}^{k+1}\|+\|{\h z}^k -{\h z}^{k+1}\|+\|{\h y}^k-{\h y}^{k+1}\|).\end{align*}
\end{theorem}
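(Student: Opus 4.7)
Plan of the proof. The argument is the standard subgradient-versus-successive-differences estimate for KL convergence analyses, tailored to the structure of $\varpi$. The key observation is that $\varpi$ is separable in $({\h x},{\h y},{\h z},{\h u})$ as far as the nonsmooth parts $\iota_S,f^*,g^*$ are concerned, and the couplings $\langle{\h z},A{\h x}\rangle$, $\langle K{\h x},{\h y}\rangle$ and $\|{\h z}-{\h u}\|^2$ are smooth. Hence, by standard subdifferential calculus the limiting subdifferential decomposes as
\begin{align*}
\partial\varpi({\h x},{\h y},{\h z},{\h u}) \subseteq \ & \Big(A^*{\h z}+\nabla\varphi({\h x})-K^*{\h y}+N_S({\h x})\Big)\\
& \times \Big(\partial f^*({\h y})-K{\h x}\Big)\\
& \times \Big(A{\h x}-\partial g^*({\h z})+\alpha\bigl(1+\tfrac{4\alpha}{\widetilde\alpha-\alpha}\bigr)({\h z}-{\h u})-(\widetilde\alpha-\alpha){\h z}\Big)\\
& \times \Big(-\alpha\bigl(1+\tfrac{4\alpha}{\widetilde\alpha-\alpha}\bigr)({\h z}-{\h u})\Big).
\end{align*}
Thus it suffices to exhibit, in each of the four slots, an element whose norm is bounded by a constant multiple of $\|{\h x}^{k+1}-{\h x}^k\|+\|{\h y}^{k+1}-{\h y}^k\|+\|{\h z}^{k+1}-{\h z}^k\|$.

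To do so, I will feed the three optimality relations in \eqref{algoKKT} into the four slots evaluated at $({\h x}^{k+1},{\h y}^{k+1},{\h z}^{k+1},{\h z}^k)$. Concretely: (a) From the ${\h x}$-update, there exists $\xi^{k+1}\in N_S({\h x}^{k+1})$ with $\xi^{k+1}=-A^*{\h z}^k+K^*{\h y}^k-\nabla\varphi({\h x}^k)-\rho({\h x}^{k+1}-{\h x}^k)$. Plugging this into the first slot and adding/subtracting gives
\[
A^*({\h z}^{k+1}-{\h z}^k)-K^*({\h y}^{k+1}-{\h y}^k)+\bigl(\nabla\varphi({\h x}^{k+1})-\nabla\varphi({\h x}^k)\bigr)-\rho({\h x}^{k+1}-{\h x}^k),
\]
whose norm is bounded by $\|A\|\|{\h z}^{k+1}-{\h z}^k\|+\|K\|\|{\h y}^{k+1}-{\h y}^k\|+(\ell_{\nabla\varphi}+\rho)\|{\h x}^{k+1}-{\h x}^k\|$. (b) From the ${\h y}$-update, $K{\h x}^{k+1}-\beta_k({\h y}^{k+1}-{\h y}^k)\in\partial f^*({\h y}^{k+1})$, which inserted into the second slot yields the single term $-\beta_k({\h y}^{k+1}-{\h y}^k)$, of norm $\le\overline\beta\|{\h y}^{k+1}-{\h y}^k\|$. (c) From the ${\h z}$-update, $A{\h x}^{k+1}-\widetilde\alpha{\h z}^{k+1}+\alpha{\h z}^k\in\partial g^*({\h z}^{k+1})$; substituting into the third slot the terms $A{\h x}^{k+1}$ cancel and one is left with $\alpha\bigl(2+\tfrac{4\alpha}{\widetilde\alpha-\alpha}\bigr)({\h z}^{k+1}-{\h z}^k)$. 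The fourth slot at ${\h u}={\h z}^k$ is exactly $-\alpha\bigl(1+\tfrac{4\alpha}{\widetilde\alpha-\alpha}\bigr)({\h z}^{k+1}-{\h z}^k)$. A triangle inequality over the four blocks then yields the claim with
\[
\zeta:=\max\Bigl\{\ell_{\nabla\varphi}+\widetilde\rho,\ \overline\beta,\ \|A\|,\ \|K\|,\ \alpha\bigl(3+\tfrac{8\alpha}{\widetilde\alpha-\alpha}\bigr)\Bigr\},
\]
up to an absolute constant absorbed into $\zeta$.

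The only non-routine point is the decomposition of $\partial\varpi$ into partial subdifferentials. Because the nonsmooth components $\iota_S({\h x})$, $f^*({\h y})$ and $-g^*({\h z})$ act on disjoint coordinates and the remaining terms are $C^1$, the limiting subdifferential sum rule applies without any qualification condition, and one gets the product-set inclusion above (the minus sign in front of $g^*$ is handled by smoothness of $g^*$ on $\mathrm{int}(\dom g^*)$, cf.\ Remark \ref{rem:local_lsc}; all iterates ${\h z}^{k+1}$ lie there since $A{\h x}^{k+1}-\widetilde\alpha{\h z}^{k+1}+\alpha{\h z}^k\in\partial g^*({\h z}^{k+1})$ forces ${\h z}^{k+1}\in\dom\partial g^*\subseteq\mathrm{int}(\dom g^*)$ by Assumption \ref{ass2}). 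Once this is in place, assembling the four bounds via the triangle inequality is immediate.
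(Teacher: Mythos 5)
Your proposal is correct and follows essentially the same route as the paper's proof: the same sum-rule decomposition of $\partial\varpi$ into partial subdifferentials, the same elements extracted from the three optimality conditions of the updates, the same resulting bounds (including the constants $\alpha\bigl(2+\tfrac{4\alpha}{\widetilde\alpha-\alpha}\bigr)$ and $\alpha\bigl(1+\tfrac{4\alpha}{\widetilde\alpha-\alpha}\bigr)$), and the same use of essential smoothness of $g^*$ to place ${\h z}^{k+1}$ in $\mathrm{int}(\dom g^*)$. One small nitpick: for your argument you need the Cartesian product of the four slots to be \emph{contained in} $\partial\varpi$ (which indeed holds, with equality, by separability of the nonsmooth parts), so the inclusion you display should point in the opposite direction.
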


\begin{proof} %
We are going to construct an element in ${\partial} {\varpi}({\h x}^{k+1},{\h y}^{k+1},{\h z}^{k+1},{ {\h u}^{k}})$.
Notice that by the sum rule \cite[Example 8.8]{RockWets} we have:
\begin{align*}
 \left\{
 \begin{array}{l}
 {\partial}_{\h x} {\varpi}({\h x},{\h y},{\h z},{ {\h u}}) = N_S({\h x}) + A^*{\h z}+\nabla \varphi({\h x})-K^*{{\h y}}\\[0.1cm]
 {\partial}_{\h y} {\varpi}({\h x},{\h y},{\h z},{ {\h u}}) = -K{{\h x}}+\partial f^*({\h y})\\[0.1cm]
 {\partial}_{\h z} {\varpi}({\h x},{\h y},{\h z},{ {\h u}}) = A{\h x}+\partial(-g^*)({\h z})+\alpha \left(1+\frac{4\alpha}{{\widetilde \alpha}-\alpha}\right)({\h z}-{{\h u}})-({\widetilde \alpha}-\alpha){\h z}\\[0.1cm]
 {\partial}_{\h u} {\varpi}({\h x},{\h y},{\h z},{ {\h u}})=  \alpha\left(1+\frac{4\alpha}{{\widetilde \alpha}-\alpha}\right)({{\h u}}-{\h z}).
 \end{array}
 \right.
 \end{align*}

Let $k \geq K_0+2$.  Using the update in line $2$ of the algorithm, there exists a vector $\upsilon^{k+1}\in N_{S}(\h x^{k+1})$
 such that
 \begin{eqnarray*}
 0= \rho\upsilon^{k+1}+\nabla \varphi({\h x}^k)-K^* {\h y}^k+A^*{\h z}^k+\rho({\h x}^{k+1}-{\h x}^k).\end{eqnarray*}
We have ${\pmb{\xi}_{\h x}^{k+1}}:=\rho\upsilon^{k+1}+ A^*{\h z}^{k+1}+\nabla \varphi({\h x}^{k+1})-K^*{{\h y}^{k+1}} \in \partial_{\h x}{\varpi}({\h x}^{k+1},{\h y}^{k+1},{\h z}^{k+1},{\h z}^{k})$, and thus
\begin{align*}
   \norm{{\pmb{\xi}_{\h x}^{k+1}}}&=\norm{A^*({\h z}^{k+1}-{\h z}^k)-\rho({\h x}^{k}-{\h x}^{k+1})+\nabla \varphi({\h x}^{k+1})-\nabla \varphi({\h x}^k)+K^*({{\h y}^{k}}-{\h y}^{k+1})}  \\
   &\leq\sqrt{\sigma_A}\|{\h z}^k-{\h z}^{k+1}\| + (\rho+\ell_{\nabla \varphi})\|{\h x}^{k}-{\h x}^{k+1}\|+\|K\|\|{\h y}^k-{\h y}^{k+1}\|.
\end{align*}
 Using the update in line 3 of the algorithm, there exists $v^{k+1}\in \partial f^*({\h y}^{k+1})$ such that
\begin{eqnarray*}
0 =  v^{k+1}-K{\h x}^{k+1}+\beta_k({\h y}^{k+1}-{\h y}^k).\end{eqnarray*}
For ${\pmb{\xi}_{\h y}^{k+1}}:=v^{k+1}-K{\h x}^{k+1} \in \partial_{\h y}{\varpi}({\h x}^{k+1},{\h y}^{k+1},{\h z}^{k+1},{\h z}^{k})$, we have
\begin{align*}
    \norm{{\pmb{\xi}_{\h y}^{k+1}}}=\beta_k\norm{{\h y}^{k+1}-{\h y}^k} \leq \overline{\beta}\|{\h y}^k-{\h y}^{k+1}\|.
\end{align*}
From the update in line 4 of the algorithm, we have that
\begin{align*}
    0\in \partial  g^*({\h z}^{k+1})-A{\h x}^{k+1}+\widetilde{ \alpha} {\h z}^{k+1} -{\alpha} {\h z}^k.
\end{align*}
Thus ${\h z}^{k+1} \in \dom \partial g^*$. Since $g^*$ is essentially smooth, we have $\dom \partial g^*=\mathrm{int}(\dom g^*)$ \cite{EssentialSmooth}. Therefore it follows that $g^*$ is differentiable at ${\h z}^{k+1}$. Consequently, for ${\pmb{\xi}_{\h z}^{k+1}}:= A{\h x}^{k+1}-\nabla g^*({\h z}^{k+1})+\alpha (1+\frac{4\alpha}{{\widetilde \alpha}-\alpha})({\h z}^{k+1}-{{\h z}^{k}})-({\widetilde \alpha}-\alpha){\h z}^{k+1} \in \partial_{\h z}{\varpi}({\h x}^{k+1},{\h y}^{k+1},{\h z}^{k+1},{\h z}^{k}) $ it holds
 $$\|\pmb{\xi}_{\h z}^{k+1}\|\le \alpha \left(2+\frac{4\alpha}{{\widetilde \alpha}-\alpha}\right)\|{\h z}^k-{\h z}^{k+1}\|.$$

Finally, for ${\pmb{\xi}_{{\h {u}}}^{k+1}}:=\alpha({1}+\frac{4\alpha}{{\widetilde \alpha}-\alpha})({\h z}^k-{\h z}^{k+1})\in \partial_{{\h  {u}}}{\varpi}({\h x}^{k+1},{\h y}^{k+1},{\h z}^{k+1},{\h z}^{k})$, we also have
 $\|\pmb{\xi}_{{\h u}}^{k+1}\|\le \alpha \left({1}+\frac{4\alpha}{{\widetilde \alpha}-\alpha}\right)\|{\h z}^k-{\h z}^{k+1}\|.$

 Since
 \begin{align*}
     {\text{\rm dist}}\Big (\h 0,\partial{\varpi} ({\h x}^{k+1},{\h y}^{k+1},{\h z}^{k+1},{\h z}^k)\Big) \leq \norm{{\pmb{\xi}_{\h x}^{k+1}}} + \norm{{\pmb{\xi}_{\h y}^{k+1}}} + \norm{{\pmb{\xi}_{\h z}^{k+1}}} + \norm{{\pmb{\xi}_{{\h u}}^{k+1}}},
 \end{align*}
the conclusion follows.
\end{proof}
The convergence of the whole sequence is provided by the following theorem, whose proof follows the spirit of \cite[Theorem 4]{LiPong15} and \cite[Theorem 3.4]{BN20}.

 \begin{theorem} \label{thm_conv}
 Under the same assumptions as Theorem \ref{Algo1:conv}, let $({\h x}^k, {\h y}^k, {\h z}^k)_{k \geq 0}$ be generated by Algorithm \ref{AdaptiveDPFS}. In addition, let Assumption \ref{ass2} hold. Let $0<\varepsilon\leq r$ and $\ell_{g,\mathcal{K}_{\varepsilon}}>0$ be the Lipschitz constant of $g$ on the compact set $\mathcal{K}_{\varepsilon} =  {\{{\h z} \in \R^s : \dist({\h z},A(S))\leq \varepsilon \}\subseteq{\text{\rm int}}(\dom g)}$. Suppose that \({{\varpi}}\), as defined in (\ref{varfun}), satisfies the KL property at every point $({\h x}, {\h y}, {\h z}, {\h u}) \in \dom \partial \varpi$. Then,
$({\h x}^k, {\h y}^k, {\h z}^k)_{k \geq 0}$ converges to a $(2\ell_{g,\mathcal{K}_{\varepsilon}}\varepsilon)$-approximate KKT point of (\ref{PForm}).
\end{theorem}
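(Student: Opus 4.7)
The plan is to invoke the classical Attouch--Bolte--Svaiter machinery adapted to the four-variable merit function $\varpi$, combining the three ingredients already established, namely the sufficient decrease (\ref{descent2}) from Theorem \ref{Algo1:conv}(i), the subgradient bound of Theorem \ref{residual}, and the uniform KL property on the set of accumulation points, into a summability estimate on the successive differences. Once I prove that $\sum_{k} e_k < +\infty$, where $e_k := \|\h x^k-\h x^{k-1}\| + \|\h y^k-\h y^{k-1}\| + \|\h z^k-\h z^{k-1}\|$, the Cauchy property of $(\h x^k,\h y^k,\h z^k)_{k\ge 0}$ will follow, and Theorem \ref{descent_Adaptivesol} then identifies the (unique) limit as a $(2\ell_{g,\mathcal{K}_\varepsilon}\varepsilon)$-approximate KKT point of (\ref{PForm}).

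First I would set up the geometry. By Theorem \ref{Algo1:conv}(iv) the iterates are bounded, and Lemma \ref{constantO} gives that the accumulation set $\Omega$ is nonempty, compact and connected, with $\varpi$ constantly equal to $\underline{\varpi}$ on the ``diagonal lift'' $\widetilde\Omega := \{(\overline{\h x},\overline{\h y},\overline{\h z},\overline{\h z}) : (\overline{\h x},\overline{\h y},\overline{\h z})\in \Omega\}$; since $\|\h z^k-\h z^{k-1}\|\to 0$, $\widetilde\Omega$ is exactly the set of accumulation points of the augmented iterates $\h w^k := (\h x^k,\h y^k,\h z^k,\h z^{k-1})$. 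Assumption \ref{ass2} renders $g^*$ essentially smooth with $\dom \partial g^* = \mathrm{int}(\dom g^*)$, so the third inclusion derived in the proof of Theorem \ref{descent_Adaptivesol} forces every accumulation $\overline{\h z}$ to lie in $\mathrm{int}(\dom g^*)$; this places $\widetilde\Omega$ inside the region where $\varpi$ is lower semicontinuous and inside $\dom \partial \varpi$ (cf.\ Remark \ref{rem:local_lsc}). Applying Lemma \ref{uniform} to $\widetilde\Omega$ then produces $\varrho,\eta>0$ and $\phi \in \Phi_\eta$ delivering a uniform KL inequality along the tail of $(\h w^k)$, with ``tail'' justified by $\dist(\h w^k,\widetilde\Omega)\to 0$ and $\varpi_k\downarrow \underline{\varpi}$.

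Next I would dispatch the trivial case $\varpi_{k_0}=\underline{\varpi}$, in which (\ref{descent2}) and monotonicity immediately freeze the iterates. Assuming $\varpi_k > \underline{\varpi}$ for all $k$ large, I chain three estimates at index $k$: the concavity inequality $\phi(\varpi_k-\underline{\varpi})-\phi(\varpi_{k+1}-\underline{\varpi}) \ge \phi'(\varpi_k-\underline{\varpi})(\varpi_k-\varpi_{k+1})$, the KL bound $\phi'(\varpi_k-\underline{\varpi})\,\dist(0,\partial\varpi(\h w^k))\ge 1$, and Theorem \ref{residual} at index $k-1$ which yields $\dist(0,\partial\varpi(\h w^k)) \le \zeta e_k$. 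Combined with (\ref{descent2}) and the elementary inequality $\|\h x^{k+1}-\h x^k\|^2+\|\h y^{k+1}-\h y^k\|^2+\|\h z^{k+1}-\h z^k\|^2\ge \tfrac{1}{3}e_{k+1}^2$, these give
\[
\tfrac{\min(c_1,c_2,c_3)}{3}\, e_{k+1}^2 \;\le\; \zeta\, e_k\,\bigl[\phi(\varpi_k-\underline{\varpi})-\phi(\varpi_{k+1}-\underline{\varpi})\bigr].
\]
A weighted AM--GM step (of the form $\sqrt{ab}\le \tfrac{1}{4}a + b$) converts this into $e_{k+1} \le \tfrac{1}{2} e_k + C\bigl[\phi(\varpi_k-\underline{\varpi})-\phi(\varpi_{k+1}-\underline{\varpi})\bigr]$; since the right-hand side telescopes and $\phi\ge 0$, summation delivers $\sum_k e_k < +\infty$, giving the desired Cauchy property and, via Theorem \ref{descent_Adaptivesol}, the conclusion.

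The main obstacle I anticipate is the bookkeeping around the four-variable merit function $\varpi$: one has to verify that the augmented iterates $\h w^k$ eventually enter both the neighborhood $\{\dist(\cdot,\widetilde\Omega)<\varrho\}$ and the sublevel band $\{\underline{\varpi}<\varpi<\underline{\varpi}+\eta\}$, and that $\partial\varpi(\h w^k)$ is well-defined and controlled by $\zeta e_k$ via the partial subdifferentials computed in Theorem \ref{residual}. The essential smoothness of $g^*$ is the crucial structural ingredient: without it, the $\h z$-component of $\partial\varpi$ would be multi-valued and one could neither guarantee $\h z^k\in\mathrm{int}(\dom g^*)$ along the tail nor use the single-valued gradient $\nabla g^*(\h z^k)$ that appears in the residual bound. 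Once these points are handled, the remainder of the argument is the standard KL summability calculation.
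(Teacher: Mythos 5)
Your proposal is correct and follows essentially the same route as the paper's proof: the same two-case split, the uniform KL property from Lemma \ref{uniform} applied to the (diagonally lifted) accumulation set, the chaining of the concavity inequality, the descent estimate (\ref{descent2}) and the subgradient bound of Theorem \ref{residual}, followed by the AM--GM step yielding the telescoping summability of the successive differences and hence the Cauchy property, with the limit identified via Theorem \ref{descent_Adaptivesol}. The only differences are cosmetic (your recursion $e_{k+1}\le \tfrac{1}{2}e_k + C[\phi(\varpi_k-\underline{\varpi})-\phi(\varpi_{k+1}-\underline{\varpi})]$ is the paper's inequality (\ref{keydesc}) divided by two), and your explicit attention to $\overline{\h z}\in\mathrm{int}(\dom g^*)$ matches the role of Remark \ref{rem:local_lsc} in the paper.
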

\begin{proof}
As seen in Theorem \ref{Algo1:conv}, the sequence $( {{\varpi}}_{k})_{k \geq K_0+2}$ is nonincreasing and its limit exists and is equal to
$\underline{{\varpi}}$.  We divide the proof into two cases.

\textit{Case I}: We assume that there exists $\widehat K\ge K_0+2$  such that ${{\varpi}}_{\widehat K}= \underline{{\varpi}}$. Then, by (\ref{descent2}),
$({\h x}^{k+1}, {\h y}^{k+1}, {\h z}^{k+1}) = ({\h x}^k, {\h y}^k, {\h z}^k)$ for all $k\ge {\widehat K}$. Thus, the conclusion follows directly from Theorem \ref{descent_Adaptivesol}.

\textit{Case II}: It holds ${{\varpi}}_{k+1}>\underline{{\varpi}}$ for all $k \geq K_0+2$. From Theorem \ref{constantO} we know that $\Omega({\h x}^0, {\h y}^0,{\h z}^0)$ is nonempty and compact, and that $\varpi(\overline{\h x}, \overline{\h y}, \overline{\h z}, \overline{\h z}) = \underline{\varpi}  $ for all $(\overline{\h x}, \overline{\h y}, \overline{\h z}) \in \Omega({\h x}^0, {\h y}^0,{\h z}^0)$. Considering Remark \ref{rem:local_lsc}, by Lemma \ref{uniform} there exist $\varrho>0$, $\eta>0$ and $\phi \in \Phi_\eta$ such that for every $(\h x, \h y, \h z, \h u)$ in
\begin{align*}
    \Lambda := & \Big \{ ({\h x}, {\h y}, {\h z}, {\h u})\;:\; {\text{dist}}\big(({\h x}, {\h y}, {\h z}),\Omega ({\h x}^0,{\h y}^0,{\h z}^0) \big)<\varrho, \norm{{\h u} - {\h z}}<\varrho,\\
    & \qquad \qquad \qquad \qquad \qquad \qquad \qquad \underline{{\varpi}}<{{\varpi}} ({\h x}, {\h y}, {\h z}, {\h u}) <\underline{{\varpi}}+\eta \Big\} \\
    \subseteq &  \R^n \times \R^p \times \mathrm{int}(\dom g^*) \times \R^s
\end{align*}
the following holds
\begin{align*}
    \phi'({{\varpi}}({\h x},{\h y},{\h z},{\h u}) - \underline{{\varpi}}){\text{\rm{dist}}}(\h 0,\partial{{\varpi}} ({\h x},{\h y},{\h z},{\h u}))\ge 1.
\end{align*}
By Theorem \ref{Algo1:conv} and  {Lemma} \ref{constantO} (iii), there exists an index $\widehat K\ge K_0+2$ with the property that
 $({\h x}^k,{\h y}^k,{\h z}^k,{\h z}^{k-1})\in {\rm{\bm\Lambda}}$ for all  $k\ge\widehat K$. Thus, according to (\ref{descent2}), it holds for all $k \geq \widehat K$
\begin{align*}
\phi({{\varpi}}_k - \underline{{\varpi}}) -\phi({{\varpi}}_{k+1}-\underline{{\varpi}}) & \ge \phi'({{\varpi}}_k - \underline{{\varpi}})\Big({{\varpi}}_k - {{\varpi}}_{k+1} \Big)\nn\\
& \ge \frac{c}{{\text{\rm{dist}}}(\h 0, \partial{{\varpi}} ({\h x}^{k},{\h y}^{k},{\h z}^{k},{\h z}^{k-1}))}\Delta({\h x}^k,{\h y}^k,{\h z}^k), \end{align*}
where $c:=\min(c_1, c_2, c_3) >0$ and $\Delta({\h x}^k,{\h y}^k,{\h z}^k):=\|{\h x}^k-{\h x}^{k+1}\|^2+\|{\h y}^k-{\h y}^{k+1}\|^2+\|{\h z}^k-{\h z}^{k+1}\|^2$.

By denoting $\delta_{{\h x},{\h y},{\h z}}^{k}:=\|{\h x}^{k-1}-{\h x}^{k}\|+\|{\h y}^{k-1}-{\h y}^{k}\|+\|{\h z}^{k-1}-{\h z}^{k}\|$, the previous inequality and Theorem \ref{residual} lead for all $k \geq \widehat K$ to
\begin{eqnarray*}  \frac{1}{3}(\delta_{{\h x},{\h y},{\h z}}^{k+1})^2&&\le \Delta({\h x}^k,{\h y}^k,{\h z}^k)\nn\\
&&\le \frac{{\text{\rm{dist}}}(\h 0,\partial{{\varpi}} ({\h x}^{k},{\h y}^{k},{\h z}^{k},{\h z}^{k-1}))}{c}\Big(\phi({{\varpi}}^k - \underline{{\varpi}}) -\phi({{\varpi}}^{k+1}-\underline{{\varpi}}) \Big)\nn\\
&&\le \frac{\zeta }{c}\delta_{{\h x},{\h y},{\h z}}^{k}\Big(\phi({{\varpi}}_k - \underline{{\varpi}}) -\phi({{\varpi}}_{k+1}-\underline{{\varpi}})\Big).
\end{eqnarray*}
The AM-GM inequality yields that for all $k \geq \widehat K$
\begin{eqnarray}\label{keydesc}&&2\delta_{{\h x},{\h y},{\h z}}^{k+1}\le \delta_{{\h x},{\h y},{\h z}}^k+  \frac{3\zeta }{c} \Big(\phi({{\varpi}}_k - \underline{{\varpi}}) -\phi({{\varpi}}_{k+1}-\underline{{\varpi}})\Big). \end{eqnarray}
Consequently,
$ \sum_{k=1}^{+\infty} \delta_{{\h x},{\h y},{\h z}}^k<+\infty.$ In other words, $({\h x}^k, {\h y}^k, {\h z}^k)_{k \geq 0}$ is a Cauchy sequence and therefore convergent. The conclusion follows from Theorem \ref{descent_Adaptivesol}.
\end{proof}

\section{Convergence rates} \label{sec:ConvRates}

In this section, we analyze the convergence rates of the sequence $({\h x}^k,{\h y}^k,{\h z}^k)_{k\ge 0}$ generated by Algorithm \ref{AdaptiveDPFS} and for the sequence $(\varpi^k)_{k \geq 0}$. The following lemma will play a crucial role in our analysis.  For its proof we refer to \cite[Lemma 3.6]{BCN19} and \cite[Lemma 10]{BN20}.

\begin{lemma}\label{scalarrate}
Let $(e_k)_{k\ge0}$ be a nonincreasing sequence of nonnegative real numbers that converges to $0$. Assume that there exists an index $k_0 \geq 1$ such that for all $k\ge k_0$,
\begin{eqnarray*}e_{k-1} -e_k \ge c e_{k}^{2\theta},\end{eqnarray*}
where $c>0$ is some constant and $\theta\in[0,1)$. Then, the following statements are true:
\begin{itemize}
\item[(i)] if $\theta=0$, then $(e_k)_{k\ge 0}$ converges in finite time;
\item[(ii)] if $\theta\in (0,\frac{1}{2}]$, then there exist $\kappa_1>0$ and $q\in [0,1)$  such that for all $k\ge k_0$
\begin{eqnarray*}0\le e_k\le \kappa_1 q^k.\end{eqnarray*}

\item[(iii)] if $\theta\in(\frac{1}{2},1)$, then there exists $\kappa_2>0$ such that for all $k\ge k_0+1$
\begin{eqnarray*}0\le e_k\le \kappa_2 k^{\frac{1}{1-2\theta}}.\end{eqnarray*}
\end{itemize}
\end{lemma}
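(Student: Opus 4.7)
The plan is to handle the three regimes of $\theta$ separately, each exploiting that $e_k \to 0^+$ together with the recurrence $e_{k-1} - e_k \ge c\, e_k^{2\theta}$ for $k \ge k_0$. Cases (i) and (ii) are essentially immediate, while (iii) is the main obstacle and requires a convexity-plus-dichotomy argument.

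For (i) with $\theta = 0$, the recurrence gives a uniform additive decrease $e_{k-1} - e_k \ge c$. If $e_k$ stayed strictly positive for every $k \ge k_0$, telescoping would force $e_k \le e_{k_0} - c(k-k_0)$, which eventually becomes negative and contradicts nonnegativity. Hence $e_K = 0$ at some finite $K$, and monotonicity freezes the sequence at zero. For (ii), since $2\theta \le 1$ and $e_k \to 0$, eventually $e_k \le 1$, so $e_k^{2\theta} \ge e_k$; the recurrence then sharpens to $e_k \le (1+c)^{-1} e_{k-1}$, producing the geometric bound with $q = (1+c)^{-1} \in [0,1)$ after absorbing finitely many initial terms into the constant $\kappa_1$.

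The main obstacle is (iii). I would introduce the auxiliary sequence $a_k := e_k^{1-2\theta}$, which is positive, nondecreasing, and tends to $+\infty$ since $1-2\theta < 0$ and $e_k \downarrow 0$. The target estimate is equivalent to a linear lower bound $a_k \gtrsim k$, and for this it suffices to produce a uniform positive lower bound on the increments $a_k - a_{k-1}$. The function $h(t) = t^{1-2\theta}$ is convex on $(0,+\infty)$ for $\theta > 1/2$ (its second derivative equals $2\theta(2\theta-1)\, t^{-2\theta-1} > 0$), so the tangent inequality at $e_{k-1}$, combined with the hypothesis $e_{k-1} - e_k \ge c\, e_k^{2\theta}$, yields
\begin{equation*}
a_k - a_{k-1} \;\ge\; (2\theta - 1)\, e_{k-1}^{-2\theta}\,(e_{k-1} - e_k) \;\ge\; c\,(2\theta - 1) \left(\frac{e_k}{e_{k-1}}\right)^{2\theta}.
\end{equation*}

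I would then close the argument with a dichotomy on the ratio $e_k/e_{k-1}$. If $e_k \ge e_{k-1}/2$, the displayed bound gives $a_k - a_{k-1} \ge c(2\theta-1)\, 2^{-2\theta}$, an absolute positive constant. Otherwise $e_k < e_{k-1}/2$, and raising to the negative power $1-2\theta$ (which reverses the inequality) yields $a_k \ge 2^{2\theta-1}\, a_{k-1}$, hence $a_k - a_{k-1} \ge (2^{2\theta-1}-1)\, a_{k_0} > 0$ by monotonicity of $(a_k)$. Setting $C$ to be the minimum of these two constants, telescoping produces $a_k \ge C(k-k_0)$, and raising to the power $1/(1-2\theta) < 0$ (which flips the direction of the inequality) yields the desired bound $e_k \le \kappa_2\, k^{1/(1-2\theta)}$ for a suitable $\kappa_2 > 0$ and $k \ge k_0 + 1$.
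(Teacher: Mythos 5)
The paper does not prove this lemma itself but defers to \cite{BCN19} and \cite{BN20}; your argument is correct and is essentially the standard one used there --- in particular, passing to $a_k = e_k^{1-2\theta}$ and running a dichotomy on the ratio $e_k/e_{k-1}$ to extract a uniform lower bound on $a_k - a_{k-1}$ is the classical Attouch--Bolte device for case (iii), and your treatments of (i) and (ii) are the usual ones. The only point worth making explicit is that in case (iii) one should first dispose of the trivial situation where $e_K = 0$ for some finite $K$ (monotonicity then forces $e_k = 0$ for all $k \ge K$ and the bound is immediate), so that $a_k = e_k^{1-2\theta}$ is well defined throughout.
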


\begin{theorem}\label{congrate}
Under the same assumptions as {Theorem \ref{thm_conv}}, let $({\h x}^k, {\h y}^k, {\h z}^k)_{k \geq 0}$ be generated by Algorithm \ref{AdaptiveDPFS}. Suppose that there exists $\theta \in [0,1)$ such that $\varpi$ satisfies the KL property at every $({\h x}, {\h y}, {\h z}, {\h u}) \in \dom \partial \varpi $ with desingularization function $\phi(s)=\frac{\sigma}{1-\theta}s^{1-\theta}$ for some $\sigma > 0$. Then, the following statements are true:

\begin{itemize}
\item[(i)] If $\theta=0$, the ${\varpi}_{k}$ converges in finite time to $\underline{\varpi}$ as $k \to +\infty$;
\item[(ii)] If $\theta\in(0,\frac{1}{2}]$, then there exist $k_0\ge K_0+2$ and $q\in(0,1)$, and a constant $\kappa_1>0$ such that $0\le {{\varpi}}_{k}-\underline{\varpi} \le \kappa_1 q^k$ for all $k\ge k_0$;
\item[(iii)] If $\theta\in(\frac{1}{2},1)$, then there exist $k_0\ge K_0+2$  and a constant $\kappa_2>0$ such that 
$0 \le {{\varpi}}_{k}-\underline{\varpi} \le \kappa_2  k^{-\frac{1}{2\theta-1}}$ for all $k\ge k_0+1$.
\end{itemize}
\end{theorem}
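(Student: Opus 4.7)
The plan is to apply Lemma \ref{scalarrate} to the nonnegative sequence $e_k := {\varpi}_k - \underline{\varpi}$, which is nonincreasing by Theorem \ref{Algo1:conv}(i) and tends to $0$ by Theorem \ref{Algo1:conv}(ii). First I would dispose of the trivial case: if ${\varpi}_{\widehat K}=\underline{\varpi}$ for some $\widehat K \geq K_0+2$, then the descent inequality \eqref{descent2} forces the iterates to stabilize, hence $e_k \equiv 0$ for all $k \geq \widehat K$ and all three assertions hold with finite-time termination. Otherwise $e_k > 0$ for every $k \geq K_0+2$, and the aim becomes to derive a recursion of the form $e_k - e_{k+1} \geq C\, e_{k+1}^{2\theta}$ for some constant $C>0$.

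The key mechanism is the interplay between the KL inequality, the subdifferential estimate in Theorem \ref{residual}, and the descent inequality \eqref{descent2}. With the prescribed desingularization $\phi(s) = \frac{\sigma}{1-\theta}s^{1-\theta}$, we have $\phi'(s) = \sigma s^{-\theta}$, so the KL inequality applied at $({\h x}^{k+1},{\h y}^{k+1},{\h z}^{k+1},{\h z}^k)$ (which lies in the admissible neighborhood for $k$ large, by Lemma \ref{constantO}(iii) and Theorem \ref{Algo1:conv}) yields
\begin{equation*}
e_{k+1}^{\theta} \leq \sigma \cdot \text{dist}\bigl(\h 0, \partial {\varpi}({\h x}^{k+1},{\h y}^{k+1},{\h z}^{k+1},{\h z}^k)\bigr).
\end{equation*}
Combining with Theorem \ref{residual} and the elementary bound $(a+b+c)^2 \leq 3(a^2+b^2+c^2)$ gives
\begin{equation*}
e_{k+1}^{2\theta} \leq 3\sigma^2 \zeta^2 \bigl(\|{\h x}^k-{\h x}^{k+1}\|^2 + \|{\h y}^k-{\h y}^{k+1}\|^2 + \|{\h z}^k-{\h z}^{k+1}\|^2\bigr).
\end{equation*}

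Finally, the descent inequality \eqref{descent2} with $c := \min(c_1,c_2,c_3) > 0$ bounds the right-hand side above by $\frac{3\sigma^2\zeta^2}{c}(e_k - e_{k+1})$, so that
\begin{equation*}
e_k - e_{k+1} \geq \frac{c}{3\sigma^2 \zeta^2}\, e_{k+1}^{2\theta} \qquad \forall k \geq k_0,
\end{equation*}
for a suitable threshold $k_0$. This is exactly the hypothesis of Lemma \ref{scalarrate} (after the index shift $k \mapsto k+1$), and the three cases $\theta = 0$, $\theta \in (0,\tfrac12]$, $\theta \in (\tfrac12,1)$ translate directly into statements (i), (ii), and (iii). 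I do not expect a serious obstacle beyond bookkeeping: the local lower semicontinuity needed to invoke Lemma \ref{uniform} is already secured by Remark \ref{rem:local_lsc} combined with the essential smoothness of $g^*$ on $\mathrm{int}(\dom g^*)$, and the constancy of ${\varpi}$ on the accumulation set is Lemma \ref{constantO}(ii), so the uniform KL step mirrors the argument in the proof of Theorem \ref{thm_conv}.
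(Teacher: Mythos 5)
Your proposal is correct and follows essentially the same route as the paper: the descent inequality \eqref{descent2}, the bound $(a+b+c)^2\le 3(a^2+b^2+c^2)$, the subgradient estimate of Theorem \ref{residual}, and the KL inequality are chained in the same order to produce $e_k-e_{k+1}\ge \frac{c}{3\zeta^2\sigma^2}e_{k+1}^{2\theta}$, after which Lemma \ref{scalarrate} gives all three cases. The only cosmetic difference is that the paper invokes the KL property directly at the (single) limit point guaranteed by Theorem \ref{thm_conv} rather than the uniform KL argument via Lemma \ref{uniform}, and it does not separate out the finite-termination case, but both variants are sound.
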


\begin{proof}
From Theorem  \ref{Algo1:conv} and Theorem \ref{residual}, we have that for all $k \geq K_0+2$
\begin{eqnarray}({{\varpi}}_{k}-\underline{\varpi}) -({{\varpi}}_{k+1}-\underline{\varpi})&\ge& {c}\left(\|{\h x}^k-{\h x}^{k+1}\|^2
 +\|{\h y}^k-{\h y}^{k+1}\|^2
+\|{\h z}^{k+1}-{\h z}^{k}\|^2\right)\nn\\
&\ge& \frac{{c}}{3} \left(\|{\h x}^k-{\h x}^{k+1}\|
 +\|{\h y}^k-{\h y}^{k+1}\|
+\|{\h z}^{k+1}-{\h z}^{k}\|\right)^2\nn\\
&\ge& \frac{{c}}{3\zeta ^2}{\text{\rm dist}}(\h 0,\partial{\varpi} ({\h x}^{k+1},{\h y}^{k+1},{\h z}^{k+1},{\h z}^k))^2,\label{ekdes}\end{eqnarray}
where $c:=\min(c_1,c_2,c_3)>0$ and $\zeta>0$. Since $\varpi$ satisfies the KL property, by Theorem \ref{thm_conv} it follows that $({\h x}^k,{\h y}^k,{\h z}^k)_{k\geq 0}$ is convergent. We denote its limit by $(\overline{\h x},\overline{\h y},\overline{\h z})$, thus ${{\underline{\varpi}}}=\varpi(\overline{\h x},\overline{\h y},\overline{\h z},\overline{\h z})$. Let $\sigma>0$, $\eta>0$ and $\varepsilon>0$ be such that, if $\|({\h x},{\h y},{\h z},{\h u})-(\overline{\h x},\overline{\h y},\overline{\h z},\overline{\h z})\|<\varepsilon$ and $\underline{\varpi}<{{{\varpi}}}({\h x},{\h y},{\h z},{\h u})<\underline{\varpi} + \eta $, then it holds
$({\h x},{\h y},{\h z},{\h u}) \in \R^n \times \R^p \times \mathrm{int}(\dom g^*) \times \R^s$
and
 $$\sigma {\text{dist}}\Big({\bf 0},\partial {{{\varpi}}}({\h x},{\h y},{\h z},{\h u})\Big)\ge \Big({{{{\varpi}}}({\h x},{\h y},{\h z},{\h u})-\underline{\varpi}}\Big)^\theta.$$
Thus, there exists $k_0\ge K_0+2$ such that for all $k\ge k_0$
 $$\sigma {\text{dist}}\Big({\bf 0},\partial {{{\varpi}}}({\h x}^{k+1},{\h y}^{k+1},{\h z}^{k+1},{\h z}^{k})\Big)\ge \Big({{{{\varpi}}}({\h x}^{k+1},{\h y}^{k+1},{\h z}^{k+1},{\h z}^{k})-\underline{\varpi}}\Big)^\theta.$$
By combining this inequality with (\ref{ekdes}) it yields that for all $k \geq k_0$
$$ ({{\varpi}}_{k}-\underline{\varpi}) -({{\varpi}}_{k+1}-\underline{\varpi}) \ge \frac{c}{ 3\zeta ^2\sigma^2}({{\varpi}}_{k+1}-\underline{\varpi})^{2\theta}.$$
The conclusion follows directly from Lemma \ref{scalarrate}.
\end{proof}
The following lemma characterizes the convergence of the sequence $( {\h x}^k,{\h y}^k,{\h z}^k)_{k\ge 0}$ in terms of  the one of the sequence $({{\varpi}}_{k}-\underline{\varpi})_{k \geq 0}$.

\begin{lemma}\label{iteratecon}
Under the same assumptions as {Theorem \ref{thm_conv}}, let $({\h x}^k, {\h y}^k, {\h z}^k)_{k \geq 0}$ be the sequence generated by Algorithm \ref{AdaptiveDPFS}. Suppose that there exists $\theta \in [0,1)$ such that $\varpi$ satisfies the KL property at every $({\h x}, {\h y}, {\h z}, {\h u}) \in \dom \partial \varpi $ with desingularization function $\phi(s)=\frac{\sigma}{1-\theta}s^{1-\theta}$ for some $\sigma > 0$, and let $(\overline{\h x},\overline{\h y},\overline{\h z})$ be the limit of $({\h x}^k, {\h y}^k, {\h z}^k)_{k \geq 0}$ as $k \to +\infty$.
Then, there exist ${\widehat K}\ge K_0+2$ and a constant $\widehat C>0$ such that for all $k\ge {\widehat K}$ it holds
\begin{eqnarray*}
\|{\h x}^k-{\overline {\h x}}\| + \|{\h y}^k-{\overline {\h y}}\| + \|{\h z}^k-{\overline {\h y}}\| \le {\widehat C}\max\{\sqrt{{{\varpi}}_{k-1}-\underline{\varpi}},\;\phi({{\varpi}}_{k-1}-\underline{\varpi}) \}.
\end{eqnarray*}\end{lemma}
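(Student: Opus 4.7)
The plan is to leverage the key one-step estimate \eqref{keydesc} established in the proof of Theorem \ref{thm_conv}, namely
\[
2\delta_{{\h x},{\h y},{\h z}}^{k+1}\le \delta_{{\h x},{\h y},{\h z}}^{k}+\frac{3\zeta}{c}\Big(\phi({\varpi}_{k}-\underline{\varpi})-\phi({\varpi}_{k+1}-\underline{\varpi})\Big) \quad \forall k \ge \widehat K,
\]
where $\widehat K \ge K_0+2$ is an index beyond which all iterates lie in a KL neighbourhood of $(\overline{\h x},\overline{\h y},\overline{\h z})$. Rearranging this as $\delta^{k+1}_{{\h x},{\h y},{\h z}} \le (\delta^{k}_{{\h x},{\h y},{\h z}}-\delta^{k+1}_{{\h x},{\h y},{\h z}}) + \tfrac{3\zeta}{c}\big(\phi({\varpi}_{k}-\underline{\varpi})-\phi({\varpi}_{k+1}-\underline{\varpi})\big)$ sets up a telescoping argument.

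Next, I would sum this rearranged inequality over $k = K, K+1,\dots,N$ for any $K \ge \widehat K$, and let $N\to+\infty$. From Theorem \ref{thm_conv} we already know that $\delta^{N+1}_{{\h x},{\h y},{\h z}}\to 0$, and by continuity of $\phi$ with $\phi(0)=0$ together with $\varpi_{N+1}\downarrow\underline{\varpi}$, the right-hand side telescopes to
\[
\sum_{k=K}^{+\infty} \delta^{k+1}_{{\h x},{\h y},{\h z}} \;\le\; \delta^{K}_{{\h x},{\h y},{\h z}} + \frac{3\zeta}{c}\,\phi({\varpi}_{K}-\underline{\varpi}).
\]
Using the triangle inequality together with the convergence $({\h x}^k,{\h y}^k,{\h z}^k)\to(\overline{\h x},\overline{\h y},\overline{\h z})$, one obtains
\[
\|{\h x}^K-\overline{\h x}\|+\|{\h y}^K-\overline{\h y}\|+\|{\h z}^K-\overline{\h z}\| \;\le\; \sum_{k=K}^{+\infty} \delta^{k+1}_{{\h x},{\h y},{\h z}}.
\]

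It then remains to control the single term $\delta^{K}_{{\h x},{\h y},{\h z}}$ by the energy gap. This is the place where the quadratic descent \eqref{descent2} enters: it gives
\[
{\varpi}_{K-1}-{\varpi}_{K} \ge c\big(\|{\h x}^{K-1}-{\h x}^{K}\|^2+\|{\h y}^{K-1}-{\h y}^{K}\|^2+\|{\h z}^{K-1}-{\h z}^{K}\|^2\big) \ge \tfrac{c}{3}(\delta^{K}_{{\h x},{\h y},{\h z}})^2,
\]
via $(a+b+c)^2 \le 3(a^2+b^2+c^2)$, so $\delta^{K}_{{\h x},{\h y},{\h z}} \le \sqrt{3/c}\,\sqrt{{\varpi}_{K-1}-\underline{\varpi}}$.

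Finally, since $\phi$ is nondecreasing on $[0,\eta)$ and $\varpi_K\le \varpi_{K-1}$, we have $\phi({\varpi}_K-\underline{\varpi})\le \phi({\varpi}_{K-1}-\underline{\varpi})$. Combining the three estimates yields
\[
\|{\h x}^K-\overline{\h x}\|+\|{\h y}^K-\overline{\h y}\|+\|{\h z}^K-\overline{\h z}\| \le \sqrt{\tfrac{3}{c}}\,\sqrt{{\varpi}_{K-1}-\underline{\varpi}} + \tfrac{3\zeta}{c}\,\phi({\varpi}_{K-1}-\underline{\varpi}),
\]
which proves the claim with $\widehat C := \sqrt{3/c}+3\zeta/c$. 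The only mildly delicate point is to ensure that the telescoping limits are taken correctly and that $K\ge \widehat K$ is chosen large enough so that $\varpi_{K-1}-\underline{\varpi}\in[0,\eta)$ is within the domain of the desingularization function; both are automatic from Theorem \ref{Algo1:conv}(ii) and Lemma \ref{constantO}(iii).
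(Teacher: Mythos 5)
Your proposal is correct and follows essentially the same route as the paper: telescoping the inequality \eqref{keydesc} from the proof of Theorem \ref{thm_conv} to bound the tail sum of discrete velocities by $\delta^{K}_{{\h x},{\h y},{\h z}}+\tfrac{3\zeta}{c}\phi(\varpi_K-\underline{\varpi})$, then controlling $\delta^{K}_{{\h x},{\h y},{\h z}}$ via the sufficient-decrease inequality \eqref{descent2} and using monotonicity of $\phi$. The only cosmetic difference is that you end with the constant $\sqrt{3/c}+3\zeta/c$ on a sum rather than the paper's $\max(\sqrt{3/c},\,3\zeta/c)$ on a max, which yields the stated bound all the same.
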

\begin{proof}
Let ${\widehat K}\ge K_0+2$ be the index introduced in the proof of Theorem \ref{thm_conv} with the property that for all $k \geq {\widehat K}$ the inequality (\ref{keydesc}) holds. 

Let $k \geq {\widehat K}$ and $n \geq k$. By summing (\ref{keydesc}) for $i=k,\ldots,n$, we obtain
\begin{eqnarray*}2S_{k}^n \le S_{k}^n +\|{\h x}^{k-1}-{\h x}^{k}\|+\|{\h y}^{k-1}-{\h y}^{k}\|+\|{\h z}^{k-1}-{\h z}^{k}\| +\frac{3\zeta }{c}\phi({{\varpi}}_{k}-\underline{\varpi}),\end{eqnarray*}
where $S_{k}^n=\sum_{i=k}^n \left(\|{\h x}^i-{\h x}^{i+1}\|+\|{\h y}^i-{\h y}^{i+1}\|+\|{\h z}^i-{\h z}^{i+1}\|\right)$.
Therefore, by using Theorem \ref{Algo1:conv} (i) and the notation ${c:}=\min(c_1,c_2,c_3)>0$,
\begin{align*}
S_{k}^n \le & \ \|{\h x}^{k-1}-{\h x}^{k}\|+\|{\h y}^{k-1}-{\h y}^{k}\|+\|{\h z}^{k-1}-{\h z}^{k}\| +\frac{3\zeta }{c}\phi({{\varpi}}_{k}-\underline{\varpi})\\
\le & \ \sqrt{3(\|{\h x}^{k-1}-{\h x}^{k}\|^2+\|{\h y}^{k-1}-{\h y}^{k}\|^2+\|{\h z}^{k-1}-{\h z}^{k}\|^2)} + \frac{3\zeta }{c}\phi({{\varpi}}_{k}-\underline{\varpi}) \\
\le & \ \frac{\sqrt{3}}{\sqrt{c}}\sqrt{{{\varpi}}_{k-1}-{{\varpi}}_{k}}+\frac{3\zeta }{c}\phi({{\varpi}}_{k}-\underline{\varpi}) \le \frac{\sqrt{3}}{\sqrt{c}}\sqrt{{{\varpi}}_{k-1}-{\underline{{\varpi}}}}+\frac{3\zeta }{c}\phi({{\varpi}}_{k-1}-\underline{\varpi}).
\end{align*}
The conclusion follows by choosing $\widehat C:=\max(\frac{\sqrt{3}}{\sqrt{c}},\frac{3\zeta }{c})$ and by letting $n \to +\infty$.
\end{proof}

By combining Theorem \ref{congrate} and Lemma \ref{iteratecon} we obtain the following convergence rate result.

\begin{theorem}\label{iterateconrate}
Under the same assumptions as {Theorem \ref{thm_conv}}, let $({\h x}^k, {\h y}^k, {\h z}^k)_{k \geq 0}$ be the sequence generated by Algorithm \ref{AdaptiveDPFS}. Suppose that there exists $\theta \in [0,1)$ such that $\varpi$ satisfies the KL property at every $({\h x}, {\h y}, {\h z}, {\h u}) \in \dom \partial \varpi $ with desingularization function $\phi(s)=\frac{\sigma}{1-\theta}s^{1-\theta}$ for some $\sigma > 0$, and let $(\overline{\h x},\overline{\h y},\overline{\h z})$ be the limit of $({\h x}^k, {\h y}^k, {\h z}^k)_{k \geq 0}$ as $k \to +\infty$. Then, the following statements are true:
\begin{itemize}
\item[(i)] If $\theta=0$, then $( {\h x}^k,{\h y}^k,{\h z}^k)_{k\ge 0}$ converges in finite time;
\item[(ii)] If $\theta\in(0,1/2]$, then there exist ${\widehat K}_1\ge K_0+2$ and ${\widehat q}\in(0,1)$, and a constant ${\widehat\kappa_1}>0$ such that for all $k\ge {\widehat K}_1$
\begin{eqnarray*}
\|{\h x}^k-{\overline {\h x}}\|\le {\widehat\kappa_1}{\widehat q}^k, \quad
\|{\h y}^k-{\overline {\h y}}\|\le {\widehat\kappa_1}{\widehat q}^k, \quad
\|{\h z}^k-{\overline {\h z}}\|\le {\widehat\kappa_1}{\widehat q}^k;
\end{eqnarray*}
\item[(iii)] If $\theta\in(1/2,1)$, then there exist ${\widehat K}_1\ge K_0+2$  and a constant ${\widehat\kappa_2}>0$ such that for all $k\ge {\widehat K}_1$
\begin{eqnarray*}
\|{\h x}^k-{\overline {\h x}}\|\le {\widehat\kappa_2}k^{-\frac{1-\theta}{2\theta-1}}, \quad
\|{\h y}^k-{\overline {\h y}}\|\le {\widehat\kappa_2}k^{-\frac{1-\theta}{2\theta-1}}, \quad
\|{\h z}^k-{\overline {\h z}}\|\le {\widehat\kappa_2}k^{-\frac{1-\theta}{2\theta-1}}.
\end{eqnarray*}
\end{itemize}
\end{theorem}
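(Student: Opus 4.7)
The plan is to simply substitute the rate estimates for the energy gap $\varpi_k-\underline{\varpi}$ from Theorem \ref{congrate} into the iterate bound from Lemma \ref{iteratecon} and to track which of the two terms in the $\max$ dominates in each of the three regimes of $\theta$. Throughout, I write $e_k:=\varpi_k-\underline{\varpi}\ge 0$ and recall that by Lemma \ref{iteratecon} there exist $\widehat K\ge K_0+2$ and $\widehat C>0$ such that for all $k\ge\widehat K$
\begin{equation*}
\|{\h x}^k-\overline{\h x}\|+\|{\h y}^k-\overline{\h y}\|+\|{\h z}^k-\overline{\h z}\|\;\le\;\widehat C\,\max\bigl\{\sqrt{e_{k-1}},\;\phi(e_{k-1})\bigr\},
\end{equation*}
where $\phi(s)=\tfrac{\sigma}{1-\theta}s^{1-\theta}$. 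Since each of the three components on the left bounds itself by the sum, proving the bounds for the sum is enough to conclude for each of the three norms individually.

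For case (i) with $\theta=0$, Theorem \ref{congrate} (i) yields an index $k_0$ with $e_k=0$ for all $k\ge k_0$. Inserting $e_{k-1}=0$ into the displayed inequality above makes both $\sqrt{e_{k-1}}$ and $\phi(e_{k-1})$ vanish, so $({\h x}^k,{\h y}^k,{\h z}^k)=(\overline{\h x},\overline{\h y},\overline{\h z})$ for all $k\ge\max\{\widehat K,k_0+1\}$, giving finite-time convergence.

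For case (ii) with $\theta\in(0,1/2]$, Theorem \ref{congrate} (ii) gives constants $\kappa_1>0$ and $q\in(0,1)$ with $e_k\le\kappa_1 q^k$ for $k\ge k_0$. Then
\begin{equation*}
\sqrt{e_{k-1}}\le\sqrt{\kappa_1}\,(q^{1/2})^{k-1},\qquad\phi(e_{k-1})\le\tfrac{\sigma\kappa_1^{1-\theta}}{1-\theta}\,(q^{1-\theta})^{k-1}.
\end{equation*}
Both factors $q^{1/2}$ and $q^{1-\theta}$ lie in $(0,1)$, so setting $\widehat q:=\max\{q^{1/2},q^{1-\theta}\}\in(0,1)$ and choosing $\widehat\kappa_1$ large enough to absorb $\widehat C$ and both prefactors (and the harmless index shift from $k-1$ to $k$) yields the geometric rate claimed in (ii).

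For case (iii) with $\theta\in(1/2,1)$, Theorem \ref{congrate} (iii) provides $e_k\le\kappa_2 k^{-1/(2\theta-1)}$ for $k\ge k_0+1$. Then $\sqrt{e_{k-1}}$ decays like $(k-1)^{-1/(2(2\theta-1))}$ whereas $\phi(e_{k-1})$ decays like $(k-1)^{-(1-\theta)/(2\theta-1)}$. Comparing the two exponents reduces to comparing $1$ with $2(1-\theta)$; since $\theta>1/2$ we have $2(1-\theta)<1$, hence the sqrt-term decays strictly faster and the $\phi$-term dominates the maximum. Absorbing the elementary estimate $(k-1)^{-(1-\theta)/(2\theta-1)}\le 2^{(1-\theta)/(2\theta-1)}k^{-(1-\theta)/(2\theta-1)}$ (valid for $k\ge 2$) together with the constants $\widehat C$ and $\tfrac{\sigma\kappa_2^{1-\theta}}{1-\theta}$ into a single constant $\widehat\kappa_2$ gives the polynomial rate in (iii).

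The only non-routine step is the exponent comparison in case (iii); everything else is just substitution and constant-bookkeeping, so I do not expect any real obstacle.
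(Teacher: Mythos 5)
Your proposal is correct and follows exactly the route the paper intends: the paper states this theorem as an immediate consequence of combining Theorem \ref{congrate} with Lemma \ref{iteratecon} and omits the details, which are precisely the substitution and exponent comparison you carry out (and your comparison in case (iii), showing the $\phi$-term dominates because $2(1-\theta)<1$, is accurate). Nothing further is needed.
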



\section{On the KL property of the merit function}\label{sec7}

In this section, we provide conditions in terms of the functions that appear in the formulation of the optimization problem \eqref{PForm} that guarantee that the merit function satisfies the KL property. To proceed, we denote by
$${\tau}:={\alpha} \left(1+\frac{4\alpha}{{\widetilde\alpha}-\alpha}\right) \quad \mbox{and} \quad {\widetilde{\gamma}}:={\widetilde \alpha}-\alpha,$$
and therefore the merit function reads
\begin{eqnarray}\label{varpi}
&&{\varpi}({\h x},{\h y},{\h z},{\h u})=\Psi({\h x},{\h y}, {\h z})
 +\frac{\tau}{2}\|{\h z}-{\h u}\|^2-\frac{\widetilde{\gamma}}{2}\|{\h z}\|^2.
\end{eqnarray}
The following lemma from \cite{LiPong18} will be important in our analysis.
\begin{lemma}\label{lem:equiv_norms}
Let $t > 0$. Then there exist $C_1 \geq C_2 > 0$ such that
\[
C_2 \|{\h a}\| \leq \left( a_1^t + \cdots + a_m^t \right)^{\frac{1}{t}} \leq C_1 \|{\h a}\| \quad \forall {\h a} = (a_1, \ldots, a_m) \in \mathbb{R}_+^m.
\]
\end{lemma}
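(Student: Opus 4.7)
The plan is to exploit positive homogeneity together with compactness of the nonnegative unit sphere, which is the standard route for proving equivalences of this $\ell^t$-type. Define $\phi:\mathbb{R}_+^m \to \mathbb{R}_+$ by $\phi({\h a}) := (a_1^t + \cdots + a_m^t)^{1/t}$. Since $s \mapsto s^t$ is continuous on $[0,\infty)$ (with the convention $0^t = 0$, valid because $t>0$), $\phi$ is continuous on $\mathbb{R}_+^m$. Moreover, a direct substitution shows $\phi(\lambda{\h a}) = \lambda\,\phi({\h a})$ for every $\lambda\ge 0$, so $\phi$ is positively homogeneous of degree $1$, just like the Euclidean norm.

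First I would dispose of the trivial case ${\h a}=0$, where both sides vanish. For the remaining case, I would introduce the nonnegative unit sphere $S_+ := \{{\h a}\in\mathbb{R}_+^m : \|{\h a}\|=1\}$, which is compact in $\mathbb{R}^m$. Since $\phi$ is continuous on the compact set $S_+$, the Weierstrass theorem yields
\[
C_1 := \max_{{\h a}\in S_+} \phi({\h a}) \quad \text{and} \quad C_2 := \min_{{\h a}\in S_+} \phi({\h a}),
\]
both attained. Strict positivity $C_2 > 0$ follows because every ${\h a}\in S_+$ has at least one component $a_i>0$, so $\phi({\h a}) \ge a_i > 0$. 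Clearly $C_1 \ge C_2 > 0$.

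For a general ${\h a}\in\mathbb{R}_+^m\setminus\{0\}$, write ${\h a} = \|{\h a}\|\,\widetilde{\h a}$ with $\widetilde{\h a} := {\h a}/\|{\h a}\| \in S_+$. Applying the homogeneity of both $\phi$ and the Euclidean norm, and using $C_2 \le \phi(\widetilde{\h a}) \le C_1$, gives $C_2\|{\h a}\| \le \phi({\h a}) \le C_1\|{\h a}\|$, which is the claim.

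There is no real obstacle here: the argument is the classical proof of equivalence of norms/quasi-norms on finite-dimensional spaces, valid uniformly for all $t>0$ (the case $t\in(0,1)$, in which $\phi$ is only a quasi-norm, is covered without modification because neither continuity nor homogeneity requires subadditivity). The only point that deserves explicit mention is that the argument takes place on $\mathbb{R}_+^m$ rather than $\mathbb{R}^m$, so that $a_i^t$ is defined without sign issues and $\phi$ remains continuous up to the boundary $\{a_i = 0\}$.
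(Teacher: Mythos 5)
Your proof is correct. Note that the paper itself gives no proof of this lemma --- it simply imports the statement by citing Li and Pong (Found.\ Comput.\ Math.\ 18, 2018) --- so there is no in-paper argument to compare against. Your compactness-plus-homogeneity argument is the standard and complete way to establish the claim: continuity of ${\h a}\mapsto (a_1^t+\cdots+a_m^t)^{1/t}$ on $\mathbb{R}_+^m$, positive homogeneity of degree one, the Weierstrass theorem on the compact set $S_+$, and the observation that the minimum over $S_+$ is strictly positive because each point of $S_+$ has a nonzero coordinate. Your remark that the case $t\in(0,1)$ requires no modification (since the argument never uses subadditivity) is exactly the right point to flag, as that is the regime in which the expression fails to be a norm and a reader might otherwise worry.
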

Considering also Remark \ref{rem:local_lsc}, $\varpi$ satisfies the KL property at any noncritical point (see for instance {\cite{AB}}). Therefore, in the following, we will only focus on statements concerning critical points.

First, we provide conditions to ensure that the KL property of $F+\iota_S$ can be passed directly to $\Psi$.

\begin{theorem}
\label{KLexp}
Let $\overline{\h x}$ be a critical point of the optimization problem \eqref{PForm}. Assume that
\begin{enumerate}
    \item[(i)] $g$ is essentially strictly convex;
    \item[(ii)] $g$ is differentiable and has a Lipschitz continuous gradient on the open ball ${B}({A}\overline{\h x},\delta_1)$ for some $\delta_1>0$;
    \item[(iii)] $f$ is differentiable and has a $\ell_{\nabla f}$-Lipschitz continuous gradient on the open ball $B(K\overline{\mathbf{x}},\delta_2)$ for some $\delta_2>0$;
    \item[(iv)]  $f^*$ is continuously differentiable on the open ball ${ B}(\nabla f (K\overline{\mathbf{x}}),\delta_3)$ for some $\delta_3 >0$.
\end{enumerate}
Then, there exists $({\overline{\h y}},{\overline{\h z}}) \in \R^p \times \R^s$ such that
\begin{eqnarray} \label{eq:unique-triple}
{\overline {\h y}} = \nabla f( K{\overline{\h x}}), \quad {\overline {\h z}} = \nabla g( A{\overline{\h x}}) \quad \mbox{and} \quad
K^* {\overline{\h y}} \in A^* {\overline{\h z}}+\nabla\varphi({\overline{\h x}}) + N_{S}(\overline{\h x}).\end{eqnarray}
If, in addition, $F+ \iota_S$  satisfies the KL property at $\overline{\h x}$ with desingularization function $\phi(s)=\frac{\sigma_F}{1-\theta}s^{1-\theta}$ for some $\sigma_F >0$ and $\theta \in [\frac{1}{2},1)$, then, the function $\Psi$ defined in (\ref{Phif}) satisfies the KL property at $(\overline{\h x}, \overline{\h y}, \overline{\h z})$ with desingularization function $\phi(s)=\frac{\sigma_\Psi}{1-\theta}s^{1-\theta}$ for some $\sigma_\Psi>0$.
\end{theorem}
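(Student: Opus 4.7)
The plan is to (i) read off the dual pair $(\overline{\h y},\overline{\h z})$ from the differentiability assumptions at $\overline{\h x}$, (ii) verify that $\Psi$ and $F+\iota_{S}$ agree at the reference point, and (iii) transfer the KL estimate by controlling the Fenchel--Young ``gaps'' that separate $\Psi$ from $F+\iota_{S}$ in terms of the partial subdifferentials of $\Psi$. The identification step is immediate: since $g$ is differentiable at $A\overline{\h x}$ and $f$ at $K\overline{\h x}$, the subdifferentials $\partial g(A\overline{\h x})$ and $\partial f(K\overline{\h x})$ are the singletons $\{\nabla g(A\overline{\h x})\}$ and $\{\nabla f(K\overline{\h x})\}$. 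Setting $\overline{\h z}:=\nabla g(A\overline{\h x})$ and $\overline{\h y}:=\nabla f(K\overline{\h x})$, the criticality condition \eqref{critical} collapses to the third inclusion of \eqref{eq:unique-triple}, and the Fenchel--Young identities $g(A\overline{\h x})+g^{*}(\overline{\h z})=\langle A\overline{\h x},\overline{\h z}\rangle$ and $f(K\overline{\h x})+f^{*}(\overline{\h y})=\langle K\overline{\h x},\overline{\h y}\rangle$ deliver $\Psi(\overline{\h x},\overline{\h y},\overline{\h z})=(F+\iota_{S})(\overline{\h x})$, anchoring the two KL inequalities at the same value.

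By (i) and \cite[Corollary 18.12]{BC17}, $g^{*}$ is essentially smooth, hence continuously differentiable on a neighborhood $V_{\h z}$ of $\overline{\h z}\in\inte(\dom g^{*})$; assumption (iv) supplies an analogous neighborhood $V_{\h y}$ of $\overline{\h y}$ for $f^{*}$. On a joint neighborhood $U$ of $(\overline{\h x},\overline{\h y},\overline{\h z})$, chosen so that the $A$- and $K$-images of its ${\h x}$-component stay in the balls from (ii)--(iii), standard subdifferential calculus gives
\[\partial\Psi=\bigl(A^{*}{\h z}+\nabla\varphi({\h x})-K^{*}{\h y}+N_{S}({\h x})\bigr)\times\{\nabla f^{*}({\h y})-K{\h x}\}\times\{A{\h x}-\nabla g^{*}({\h z})\}.\]
The local $\ell_{\nabla g}$-Lipschitz property of $\nabla g$ together with the inversion identity $\nabla g(\nabla g^{*}({\h z}))={\h z}$ (valid on $V_{\h z}$ by essential smoothness of $g^{*}$) produce both
$\|{\h z}-\nabla g(A{\h x})\|\le \ell_{\nabla g}\|\nabla g^{*}({\h z})-A{\h x}\|$ and the Bregman-type gap bound
$G_{g}({\h x},{\h z}):=g(A{\h x})+g^{*}({\h z})-\langle A{\h x},{\h z}\rangle\le \tfrac{\ell_{\nabla g}}{2}\|\nabla g^{*}({\h z})-A{\h x}\|^{2}$, and the symmetric statements hold for $f$. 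Writing any $\xi_{\h x}\in\partial_{\h x}\Psi$ as $A^{*}{\h z}+\nabla\varphi({\h x})-K^{*}{\h y}+\nu$ with $\nu\in N_{S}({\h x})$, the element $\xi_{\h x}+A^{*}(\nabla g(A{\h x})-{\h z})-K^{*}(\nabla f(K{\h x})-{\h y})$ belongs to $\partial(F+\iota_{S})({\h x})$, so invoking Lemma~\ref{lem:equiv_norms} to convert coordinate norms into the Euclidean one yields
\[\dist(0,\partial(F+\iota_{S})({\h x}))\le C_{1}\,\dist(0,\partial\Psi({\h x},{\h y},{\h z})).\]

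On a suitably shrunk neighborhood $U'\subseteq U$ with $\Psi(\overline{\h x},\overline{\h y},\overline{\h z})<\Psi({\h x},{\h y},{\h z})<\Psi(\overline{\h x},\overline{\h y},\overline{\h z})+\eta$, the key identity is
\[\Psi({\h x},{\h y},{\h z})-\Psi(\overline{\h x},\overline{\h y},\overline{\h z})=\bigl[(F+\iota_{S})({\h x})-(F+\iota_{S})(\overline{\h x})\bigr]+G_{f}({\h x},{\h y})-G_{g}({\h x},{\h z}),\]
from which, passing to the positive part and using subadditivity of $s\mapsto s^{\theta}$ on $\R_{+}$,
\[\bigl(\Psi-\Psi(\overline{\h x},\overline{\h y},\overline{\h z})\bigr)^{\theta}\le \bigl[(F+\iota_{S})({\h x})-(F+\iota_{S})(\overline{\h x})\bigr]_{+}^{\theta}+G_{f}({\h x},{\h y})^{\theta}.\]
The first summand is bounded by $\sigma_{F}\dist(0,\partial(F+\iota_{S})({\h x}))\le \sigma_{F}C_{1}\dist(0,\partial\Psi)$ via the KL hypothesis on $F+\iota_{S}$ combined with the previous subdifferential comparison. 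For the second, $G_{f}^{\theta}\le (\tfrac{\ell_{\nabla f}}{2})^{\theta}\|\partial_{\h y}\Psi\|^{2\theta}$; since $2\theta\ge 1$ and $\|\partial_{\h y}\Psi\|$ is uniformly bounded on the compact closure of $U'$, this reduces to $C_{2}\|\partial_{\h y}\Psi\|\le C_{2}\dist(0,\partial\Psi)$. Summing both contributions yields $(\Psi-\Psi(\overline{\h x},\overline{\h y},\overline{\h z}))^{\theta}\le \sigma_{\Psi}\dist(0,\partial\Psi)$ with $\sigma_{\Psi}:=\sigma_{F}C_{1}+C_{2}$, i.e.\ the desingularization $\phi(s)=\tfrac{\sigma_{\Psi}}{1-\theta}s^{1-\theta}$.

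The main obstacle is the threshold $\theta\ge \tfrac{1}{2}$: this is precisely what makes the quadratic-in-subgradient bound $G_{f}^{\theta}\le C\|\partial_{\h y}\Psi\|^{2\theta}$ absorbable into the linear-in-subgradient bound required by the KL inequality, and it is sharp in our scheme; for $\theta<\tfrac{1}{2}$ one would be left with a sublinear bound and the exponent could not be preserved. A secondary technical point is the careful intersection of the local Lipschitz/smoothness neighborhoods from (ii)--(iv) with the neighborhood supplied by the KL hypothesis for $F+\iota_{S}$, as well as the reliance on the essential smoothness of $g^{*}$ to make sense of $\nabla g^{*}({\h z})$ and to render $\Psi$ locally lower semicontinuous on $U$ despite the presence of the concave term $-g^{*}$.
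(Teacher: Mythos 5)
Your proof is correct and follows essentially the same route as the paper's: anchor $\Psi(\overline{\h x},\overline{\h y},\overline{\h z})=F(\overline{\h x})$ via Fenchel--Young, split $\Psi-\Psi(\overline{\h x},\overline{\h y},\overline{\h z})$ into the $F$-difference plus the two duality gaps, discard the nonpositive $g$-gap, bound the $f$-gap by $\tfrac{\ell_{\nabla f}}{2}\|\nabla f^*({\h y})-K{\h x}\|^2$, and transfer the KL inequality through a comparison of $\dist({\bf 0},\partial (F+\iota_S)({\h x}))$ with $\dist({\bf 0},\partial\Psi({\h x},{\h y},{\h z}))$. The only cosmetic differences are that you derive the estimates $\|{\h z}-\nabla g(A{\h x})\|\le \ell_{\nabla g}\|\nabla g^*({\h z})-A{\h x}\|$ (and the $f$-analogue) directly from the inversion identity and local Lipschitz continuity of the gradients, where the paper invokes the metric-regularity characterization of \cite{artacho_metric_reg}, and that you work with $\theta$-powers and subadditivity plus a uniform bound on $\|\nabla f^*({\h y})-K{\h x}\|$, where the paper works with $\tfrac{1}{\theta}$-powers, Lemma~\ref{lem:equiv_norms}, and the normalization $\|\nabla f^*({\h y})-K{\h x}\|<1$.
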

\begin{proof} The KKT optimality condition \eqref{eq:unique-triple} is a consequence of the differentiability assumptions (ii) and (iii).

By assumption, there exist $\varrho,\eta >0$, such that the KL inequality
 \begin{eqnarray*}F({\h x})- F({\overline{\h x}}) \leq \sigma_F^\frac{1}{\theta} {\text{dist}}^{\frac{1}{\theta}}({\bf 0},\partial (F + \iota_S)({\h x}))\end{eqnarray*}
 holds whenever $\h x \in S$, $\|{\h x}-{\overline{\h x}}\|< \varrho$ and $F({{\h x}}) < F({\overline{\h x}}) + \eta $. Here, the condition $F({\overline{\h x}})< F({{\h x}})$ was omitted, since the inequality holds trivially otherwise.

Let $\epsilon_0>0$ such that ${{\h x}\in B(\overline{\h x},\epsilon_0)}$ implies $A{\h x} \in {B}(A\overline{\h x},\delta_1)$, $K{\h x} \in {B}(K\overline{\h x},\delta_2)$, $\norm{K {\h x} - K \overline{\h x}}<\frac{1}{2}$ and $   F({{\h x}})  < F({\overline{\h x}}) + \eta $.  This is possible, since our assumptions ensure that $F$ is continuous around $\overline{\h x}$.

By assumption (i), we have $g^*$ is differentiable on $\dom \partial g^* = \mathrm{int}(\dom g^*)$. Since $A\overline{\h x} \in \partial g^*(\overline{\h z} )$, it follows that $\overline{\h z}\in \mathrm{int}(\dom g^*)$. Hence, we can choose $\epsilon_1>0$ such that $\norm{\h z - \overline{\h z}}<\epsilon_1$ implies that ${\h z } \in \mathrm{int}(\dom g^*)$, so that $g^*$ is differentiable at ${\h z}$.

Since $\nabla f^*$ is continuous at $\overline{\h y} = \nabla f( K{\overline{\h x}})$, there exists $\epsilon_2>0$ such that
\begin{align}\label{est:nablaf*cont}
    \norm{\nabla f^*(\h y) - K\overline{\h x}}=\norm{\nabla f^*({\h y}) - \nabla f^*(\overline{\h y})} < \min(\tfrac{1}{2},\delta_2)
\end{align}
whenever $\norm{\h y - \overline{\h y}}<\epsilon_2$.

 Since $g$ is differentiable and has a Lipschitz continuous gradient on a neighborhood of $A\overline{\h x}$, we have by \cite[Theorems 3.6, 3.10]{artacho_metric_reg} that $\nabla g^*$ is metrically regular at $(\overline{\h z},A{\overline{\h x}})$. This means that there exist
   $\ell_1>0$ and $\epsilon_3>0$ such that
                       \begin{eqnarray}\label{est:g_metric}\|\nabla g(A{\h x})-{\h z}\|\le \ell_{1} \norm{A{\h x} - \nabla g^*({\h z)}}
                       \end{eqnarray}
                       whenever $\|{\h z}-{\overline{\h z}}\|<\min (\epsilon_1,\epsilon_3)$ and $\|{\h x}-{\overline{\h x}}\|<\epsilon_3$.

Analogously, $\nabla f^*$ is metrically regular at $(\overline{\h y},K{\overline{\h x}})$, thus there exist
   $\ell_2>0$ and $\epsilon_4>0$ such that
                        \begin{eqnarray}\label{est:f_metric}\|\nabla f(K{\h x})-{\h y}\|\le { }\ell_{2} \norm{{K{\h x}- \nabla f^*({\h y})}}\end{eqnarray}
                        whenever $\|{\h y}-{\overline{\h y}}\|<\epsilon_4$ and $\|{\h x}-{\overline{\h x}}\|<\epsilon_4$.

Now choose $({\h x}, {\h y }, {\h z})$ such that
\begin{align*}
    \norm{{\h x} - \overline{\h x}} &< \min (\varrho, \epsilon_0, \epsilon_3, \epsilon_4), \\
    \norm{{\h y} - \overline{\h y}} &< \min (\epsilon_2,\epsilon_4), \\
    \norm{{\h z} - \overline{\h z}}  &< \min (\epsilon_1,\epsilon_3),
\end{align*}
and $\Psi(\overline{\h x},\overline{\h y},\overline{\h z})< \Psi({\h x}, {\h y}, {\h z}) < \Psi(\overline{\h x},\overline{\h y},\overline{\h z}) + 1$ hold. Then ${\h x}\in S$ must hold.
 We now prove that there exists $\sigma_\Psi>0$ such that $\Psi$ satisfies the KL property at $(\overline{\h x}, \overline{\h y }, \overline{\h z})$ with desingularization function $\phi(s)=\frac{\sigma_\Psi}{1-\theta}s^{1-\theta}$.

 Since ${\overline{\h y}}=\nabla f(K{\overline{\h x}})$ and ${\overline{\h z}}= \nabla g(A{\overline{\h x}})$, it holds $\Psi({\overline{\h x}},{\overline{\h y}},{\overline{\h z}})=F({\overline{\h x}})$. Therefore,
\begin{eqnarray*}&&\Psi({\h x},{\h y},{\h z})-\Psi({\overline{\h x}},{\overline{\h y}},{\overline{\h z}})\nn\\
= && \ \langle{\h z}, A{\h x} \rangle-g^*({\h z}) + \varphi({\h x}) -\langle K{\h x},{\h y}\rangle+f^*({\h y})-F({\overline{\h x}})\nn\\
= && \ \underbrace{\langle{\h z}, A{\h x} \rangle-g^*({\h z})-g(A{{\h x}})}_{\le 0}+F({\h x})-F({\overline{\h x}})
+\underbrace{f(K{\h x})+f^*({\h y})-\langle K{\h x},{\h y} \rangle}_{\diamond}.\end{eqnarray*}
By the local Lipschitz continuity of the gradient of $f$, we obtain
\begin{eqnarray*}\diamond&=& f(K{\h x}) + \langle \nabla  f^*({\h y}),{\h y} \rangle-f(\nabla  f^*({\h y}))- \langle K{\h x},{\h y} \rangle\nn\\
                        &=& f(K{\h x}) -f(\nabla  f^*({\h y})) +  \langle \nabla  f^*({\h y}) -K{\h x},{\h y} \rangle \nn\\
                        & \leq & \frac{\ell_{\nabla f}}{2}\norm{\nabla  f^*({\h y})-K{\h x}}^{2} \nn \\
                        &\leq & \frac{\ell_{\nabla f}}{2}\norm{{\nabla f^*({\h y})}-K{\h x}}^{\frac{1}{\theta}},
                        \end{eqnarray*}
                        since $\nabla  f^*({\h y}), K{\h x}\in {B}(K\overline{\h x},\delta_2)$, $\norm{\nabla  f^*({\h y})-K{\h x}} \leq \norm{\nabla  f^*({\h y})-K\overline{\h x}} + \norm{ K{\h x}-K\overline{\h x}} < 1$  and $\theta \geq  \frac{1}{2} $.
                        On the other hand, we have
                        \begin{eqnarray*} {\text{dist}}({\bf 0},\partial (F+\iota_S)({\h x}))&\le&  \norm{ A^* \nabla g(A{\h x})-A^* {\h z}}\nn\\
 &&+{\text{ dist}}\Big ({\bf 0},A^*{\h z}+\nabla \varphi({\h x})+\partial\iota_{S}(\h x) -K^* {\h y} \Big ) +\partial\iota_{S}(\h x) -K^* {\h y}) \nn \\
                       & &+\norm{K^*{\h y} -K^* \nabla f(K{\h x})}.\end{eqnarray*}
According to Lemma \ref{lem:equiv_norms}, there exists $c_2>0$ such that
 \begin{eqnarray*} {\text{dist}}^{\frac{1}{\theta}}({\bf 0},\partial (F+\iota_S)({\h x}))&\le&c_2\big( \sigma_A^{\frac{1}{2\theta}}\norm{ \nabla g(A{\h x})- {\h z}}^{\frac{1}{\theta}}\nn\\
 &&+{\text{ dist}}^{\frac{1}{\theta}} \Big ({\bf 0},A^*{\h z}+\nabla \varphi({\h x})+\partial\iota_{S}(\h x) -K^* {\h y} \Big ) \nn \\
    & &+\sigma_K^{\frac{1}{2\theta}}\norm{{\h y} -  \nabla f(K{\h x})}^{\frac{1}{\theta}}\big),\end{eqnarray*}
                       where $\sigma_A=\norm{A}^2$ and $\sigma_K=\norm{K}^2$.
Combining this with the KL inequality for $F$, \eqref{est:nablaf*cont}, \eqref{est:g_metric} and \eqref{est:f_metric}, we obtain
                        \begin{eqnarray*} &&\Psi({\h x},{\h y},{\h z})-\Psi({\overline{\h x}},{\overline{\h y}},{\overline{\h z}})\nn\\
                        \le && \ c_2{\sigma_F^{\frac{1}{\theta}}}\bigg[\sigma_A^{\frac{1}{2\theta}}\ell_{1}^{\frac{1}{\theta}} \norm{A{\h x} - \nabla g^*({\h z})}^{\frac{1}{\theta}}+{\text{dist}}^{\frac{1}{\theta}}\Big ({\bf 0},A^*{\h z}+\nabla \varphi({\h x})+\partial\iota_{S}(\h x) -K^* {\h y} \Big )\nn\\
                        &&\quad \quad \quad +\sigma_K^{\frac{1}{2\theta}}\ell_{2}^{\frac{1}{\theta}} \norm{K{\h x} - \nabla f^*(\h y)}^{\frac{1}{\theta}}\bigg] + \frac{\ell_{\nabla f}}{2}\norm{K{\h x}- \nabla f^*({\h y})}^\frac{1}{\theta}\nn\\
                        \le && \ \sigma_{\Psi}^\frac{1}{\theta} {\text{dist}}{^{\frac{1}{\theta}}}\left({\bf 0}, \left(\begin{array}{c} A^*{\h z}+\nabla \varphi({\h x})+\partial\iota_{S}(\h x) -K^* {\h y}\\
                        -K{\h x}+\nabla f^*({\h y})\nn\\
                        A{\h x}-\nabla g^*({\h z})\end{array}\right)\right),\end{eqnarray*}
                        where $\sigma_{\Psi}:=C_1 {\max\left(
c_2\sigma_F^{1/\theta}\sigma_A^{1/2\theta}\ell_1^{1/\theta}, c_2 \sigma_F^{1/\theta},c_2 \sigma_F^{1/\theta}\sigma_K^{1/2\theta}\ell_2^{1/\theta}+\frac{1}{2}\ell_{\nabla f} \right)^{\theta}}
$ and $C_1>0$ is the constant provided by Lemma \ref{lem:equiv_norms}.

To summarize, there exists $\varrho_\Psi>0$ and $\sigma_\Psi>0$ such that for $({\h x},{\h y}, {\h z})$ with $\norm{({\h x},{\h y}, {\h z}) - (\overline{\h x},\overline{\h y},\overline{\h z}) } < \varrho_\Psi$ and $\Psi({\overline{\h x}},{\overline{\h y}},{\overline{\h z}}) < \Psi({\h x},{\h y},{\h z}) < \Psi({\overline{\h x}},{\overline{\h y}},{\overline{\h z}}) + 1$ it holds
                        \begin{align*}
                            (\Psi({\h x},{\h y},{\h z})-\Psi({\overline{\h x}},{\overline{\h y}},{\overline{\h z}}))^\theta \leq \sigma_\Psi \dist({\bm  0, \partial \Psi({\h x},{\h y},{\h z})}).
                        \end{align*}

\end{proof}

The following result is an extension of \cite[Theorem 3.6]{LiPong18}.
\begin{lemma}\label{lem:LiPong}
     Let $h:{\R^{N_1}\times \dots \times \R^{N_n}} \to \overline{\R}$ be a proper and lower semicontinuous function and let $(\overline{\h x}_1, \dots, \overline{\h x}_n)$ be a critical point of $h$.  Suppose that $h$ satisfies the KL property at  $(\overline{\h x}_1, \dots, \overline{\h x}_n)$  with desingularization function $\phi(s)=\frac{\sigma_h}{1-\theta}s^{1-\theta}$ for some $\sigma_h > 0$ and $\theta \in [\frac{1}{2},1)$. For $i \in \{1,\dots,n\}$ and $\beta >0$, consider the function $H_i : \R^{N_1}\times \dots \times \R^{N_n} \times \R^{N_i} \to \overline{\R}, H_i(\h x_1,\dots,\h x_n,\h y):=h(\h x_1,\dots,\h x_n)+\frac{\beta}{2}\norm{\h x_i-\h y}^2 $. Then $H_i$ satisfies the KL property at its critical point $(\overline{\h x}_1, \dots, \overline{\h x}_n, \overline{\h x}_i)$ with desingularization function $\phi(s)=\frac{\sigma_H}{1-\theta}s^{1-\theta}$ for some $\sigma_H>0$.
     \begin{proof}
 For simplicity, we denote ${\h x} =({\h x}_1, ..., {\h x}_n)$ and $\overline{\h x} =(\overline{\h x}_1, ..., \overline{\h x}_n)$. By assumption, there exist $\sigma_h>0$, $\rho>0$ and $\eta \in (0,+\infty]$ such that
        \begin{align*}
            \sigma_h^\frac{1}{\theta} \dist^{\frac{1}{\theta}}(0,\partial h(\h x)) \geq  h(\h x)-h(\overline{\h x})
        \end{align*}
        whenever $\norm{\h x-\overline{ \h x}}<\rho$ and $ h(\h x) < h(\overline{\h  x} )+\eta$. Now consider $(\h x,\h y)$ with $\h x\in \dom \partial h$, $\norm{\h x-\overline{\h x}}\leq \min(\rho,\frac{1}{2})$, $ \norm{\overline{\h x}_i - \h y}<\min(\frac{1}{2},\sqrt{\frac{\eta}{\beta}})$ and $H_i(\overline{ \h x}, \overline{ \h x}_i) < H_i(\h x,\h y) < H_i(\overline{\h  x}, \overline{\h  x}_i)+\frac{\eta}{2}$. Then $h(\h x) < h(\overline{\h x}) + \eta$ and
        \begin{align*}
            \dist^{\frac{1}{\theta}}(0,{\partial}H_i(\h x,\h y))&= \!\! \! \inf_{{\substack{\xi \in \partial h({\h x}),\\ \xi:=(\xi_1,\ldots,\xi_n)}} } \left(\norm{\beta(\h x_i-\h y)}^2+\norm{\beta(\h x_i-\h y)+\xi_i}^2+\sum_{\substack{
                 j=1, j\neq i}}^n\norm{\xi_j}^2\right)^{\frac{1}{2\theta}} \\
            &\geq{\tilde C_1} \inf_{\xi \in \partial h(x)}\left( \norm{\beta({\h x_i}-\h y)}^{\frac{1}{\theta}}+\norm{\beta(\h x_i-\h y)+\xi_i}^{\frac{1}{\theta}}+\sum_{j=1,j\neq i}^n\norm{\xi_j}^{\frac{1}{\theta}}\right),
        \end{align*}
where ${\tilde C_1}>0$ {is the constant provided by  Lemma \ref{lem:equiv_norms}}. Furthermore, we can apply \cite[Lemma 3.1]{LiPong18} with $\nu_1>0$ and $\nu_2\in(0,1)$, and get
        \begin{align*}
        \dist^{\frac{1}{\theta}}(0, {\partial}H_i(\h x,\h y))&\geq {{\tilde C_1}} \inf_{\xi \in \partial h(x)} \left((1-\nu_2)\norm{\beta(\h x_i-\h y)}^{\frac{1}{\theta}}+\nu_1\norm{\xi_i}^{\frac{1}{\theta}}+\sum_{j=1,j\neq i}^n\norm{\xi_j}^{\frac{1}{\theta}}\right)\nn\\
       & \geq  {{\tilde C_1}} \inf_{\xi \in \partial h(x)} \left((1-\nu_2)\norm{\beta(\h x_i-\h y)}^{\frac{1}{\theta}}+{\min(\nu_1,1)(\norm{\xi_i}^{\frac{1}{\theta}}+\sum_{j=1,j\neq i}^n\norm{\xi_j}^{\frac{1}{\theta}})}\right)\nn\\
        &\geq {{\tilde C_1}} \inf_{\xi \in \partial h(x)} \left((1-\nu_2)\norm{\beta(\h x_i-\h y)}^{\frac{1}{\theta}}+{\min(\nu_1,1){ C}_2^{1/\theta}\dist^{\frac{1}{\theta}}(0,\partial h(\h x))}\right),
        \end{align*}
where ${C_2}>0$ {is the constant provided by  Lemma \ref{lem:equiv_norms}}. This yields, for $\tilde  C_2 := {\tilde C_1}\min((1-\nu_2), {\min(\nu,1){C}_2^{1/\theta}})$,
        \begin{align*}
        \dist^{\frac{1}{\theta}}(0, {\partial}H_i(\h x,\h y))\geq  \tilde C_2 \left(\beta^{\frac{1}{\theta}}\norm{\h x_i-\h y}^{\frac{1}{\theta}} + \dist^{\frac{1}{\theta}}(0,\partial h(\h x))  \right).
        \end{align*}
        The last inequality is due to Lemma \ref{lem:equiv_norms}.
Using that $\norm{{\h x}_i - {\h y}}\leq \norm{{\h x}_i-{\overline{\h x}_i}} + \norm{\overline{\h x}_i- {\h y}}<1$, the KL property for $h$ yields
        \begin{align*}
        \dist^{\frac{1}{\theta}}(0, {\partial}{H_i}(\h x,\h y))\geq \tilde C_2 \left(\sigma_h^{-\frac{1}{\theta}}(h(\h x)-h(\overline{ \h x})) + \beta^{\frac{1}{\theta}} \norm{\h x_i-\h y}^2  \right).
        \end{align*}
This allows us to conclude, for $C_3 := \tilde C_2\min ( \sigma_h^{-\frac{1}{\theta}}, 2\beta^{\frac{1}{\theta}-1})$, that
        \begin{align*}
        \dist^{\frac{1}{\theta}}(0, {\partial}H_i(\h x,\h y)) \geq  C_3(H_i(\h x,\h y)-H_i(\overline{\h  x} , \overline{\h x}_i )).
        \end{align*}
     \end{proof}
\end{lemma}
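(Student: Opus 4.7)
The plan is to derive the KL inequality for $H_i$ by carefully relating its subdifferential and function values to those of $h$, then exploiting the assumption $\theta \geq \tfrac{1}{2}$ to absorb the quadratic penalty $\tfrac{\beta}{2}\|\h x_i - \h y\|^2$ into a term comparable with the Łojasiewicz residual.

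First, I would compute the limiting subdifferential of $H_i$. Since $H_i(\h x,\h y) = h(\h x) + \tfrac{\beta}{2}\|\h x_i-\h y\|^2$, the sum rule (with the second summand smooth) yields
\[
\partial H_i(\h x,\h y) = \{(\xi_1,\dots,\xi_{i-1},\xi_i + \beta(\h x_i-\h y),\xi_{i+1},\dots,\xi_n,\,\beta(\h y-\h x_i)) : \xi \in \partial h(\h x)\}.
\]
In particular $H_i(\overline{\h x},\overline{\h x}_i)=h(\overline{\h x})$ and $(\overline{\h x},\overline{\h x}_i)$ is a critical point of $H_i$ whenever $\overline{\h x}$ is a critical point of $h$.

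Next, I would lower bound $\dist(0,\partial H_i(\h x,\h y))$ by a positive linear combination of $\dist(0,\partial h(\h x))$ and $\|\h x_i - \h y\|$. The key inequality is a Young-type estimate: for any vectors $a,b$ and any $\nu\in(0,1)$,
\[
\|a\|^2 + \|b+a\|^2 \;\geq\; (1-\nu)\|b\|^2 + \bigl(1-\tfrac{1-\nu}{\nu}\bigr)\|a\|^2,
\]
applied with $a=\beta(\h x_i-\h y)$ and $b=\xi_i$ to the two $i$-related coordinates of $\partial H_i$. After choosing $\nu$ large enough so that both coefficients are strictly positive, and using Lemma \ref{lem:equiv_norms} to pass between the $\ell^2$-aggregation of components and the $\ell^{1/\theta}$-power sum, one obtains constants $c_1,c_2>0$ with
\[
\dist^{1/\theta}\!\bigl(0,\partial H_i(\h x,\h y)\bigr) \;\geq\; c_1\,\dist^{1/\theta}\!\bigl(0,\partial h(\h x)\bigr) + c_2\,\beta^{1/\theta}\|\h x_i-\h y\|^{1/\theta}.
\]

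The rest of the argument is straightforward. On a small neighbourhood of $(\overline{\h x},\overline{\h x}_i)$ where $\|\h x_i-\h y\|\leq 1$, the assumption $\theta\in[\tfrac{1}{2},1)$ forces $1/\theta \leq 2$, hence $\|\h x_i-\h y\|^2 \leq \|\h x_i-\h y\|^{1/\theta}$. Shrinking the neighbourhood further so that the KL inequality for $h$ applies (choosing $\|\h x-\overline{\h x}\|$ small and $h(\h x) - h(\overline{\h x})$ less than the KL threshold, which can be ensured by demanding $H_i(\h x,\h y) - H_i(\overline{\h x},\overline{\h x}_i)$ small since $h(\h x) - h(\overline{\h x}) \leq H_i(\h x,\h y) - H_i(\overline{\h x},\overline{\h x}_i)$), we get
\[
h(\h x) - h(\overline{\h x}) \;\leq\; \sigma_h^{1/\theta}\,\dist^{1/\theta}\!\bigl(0,\partial h(\h x)\bigr).
\]
Adding $\tfrac{\beta}{2}\|\h x_i-\h y\|^2 \leq \tfrac{\beta}{2}\|\h x_i-\h y\|^{1/\theta}$ and combining with the lower bound on $\dist^{1/\theta}(0,\partial H_i)$ yields
\[
H_i(\h x,\h y) - H_i(\overline{\h x},\overline{\h x}_i) \;\leq\; \sigma_H^{1/\theta}\,\dist^{1/\theta}\!\bigl(0,\partial H_i(\h x,\h y)\bigr)
\]
for a suitable $\sigma_H>0$, which is exactly the desired KL inequality with desingularization $\phi(s)=\frac{\sigma_H}{1-\theta}s^{1-\theta}$.

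The main obstacle is the Young-type step: one must split the mixed term $\|\xi_i + \beta(\h x_i-\h y)\|^2$ without losing positivity of the coefficient of $\|\beta(\h x_i-\h y)\|^2$, and then transfer the estimate from the $\ell^2$-norm appearing in $\dist(0,\partial H_i)$ to the $\ell^{1/\theta}$-aggregation that is compatible with the additive structure on the right-hand side. All the norm-equivalence constants are then absorbed into $\sigma_H$ via Lemma \ref{lem:equiv_norms}. The restriction $\theta \geq 1/2$ is essential precisely because it allows the quadratic penalty $\|\h x_i-\h y\|^2$ to be dominated locally by $\|\h x_i-\h y\|^{1/\theta}$; for $\theta < 1/2$ the two exponents would go the wrong way and the argument would fail.
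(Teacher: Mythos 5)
Your proposal is correct and follows essentially the same route as the paper: compute $\partial H_i$ by the sum rule, establish the key lower bound $\dist^{1/\theta}(0,\partial H_i(\h x,\h y)) \geq c_1\,\dist^{1/\theta}(0,\partial h(\h x)) + c_2\,\|\h x_i-\h y\|^{1/\theta}$, and then use $\theta \geq \tfrac12$ to dominate the quadratic penalty by $\|\h x_i-\h y\|^{1/\theta}$ on a neighborhood where $\|\h x_i-\h y\|<1$. The only (harmless) difference is that you split the mixed term $\|\xi_i+\beta(\h x_i-\h y)\|^2$ with an elementary Cauchy--Schwarz/Young estimate on the squared Euclidean norms before converting to $1/\theta$-powers via Lemma \ref{lem:equiv_norms}, whereas the paper first converts and then invokes \cite[Lemma 3.1]{LiPong18} on the $1/\theta$-power sums.
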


The following theorem introduces a setting which guarantees that $\varpi$ has the KL property with desingularization function $\phi(s)=\frac{\sigma}{1-\theta}s^{1-\theta}$, which by Theorem \ref{iterateconrate} is sufficient to prove a linear convergence rate of the generated sequence.

\begin{theorem}\label{KLexana}
For ${\widetilde{\gamma}} = \widetilde{\alpha} - \alpha$, let $F_{{\widetilde{\gamma}}}(\h x):= \mathcal{G}_{g,\widetilde{\gamma}}(A{\h x})+\varphi(\h x)-f(K{\h x})$ and $\overline{\h x}$ be a critical point of $F_{{\widetilde{\gamma}}}+\iota_S$ with the property that $F_{{\widetilde{\gamma}}}+\iota_S$ satisfies the KL property at  $\overline{\h x}$  with desingularization function $\phi(s)=\frac{\sigma}{1-\theta}s^{1-\theta}$ for some $\sigma >0 $ and $\theta \in [\frac{1}{2},1)$.
Assume that $g$ is essentially strictly convex and $f$ satisfies the assumptions (iii)-(iv) of Theorem \ref{KLexp} for $\overline{\h x}$.
Then, there exists $({\overline{\h y}},{\overline{\h z}}) \in \R^p \times \R^s$ such that
\begin{eqnarray}\label{KKT-KL}
{\overline {\h y}} = \nabla f( K{\overline{\h x}}), \quad {\overline {\h z}} = \nabla \mathcal{G}_{g,\widetilde{\gamma}}( A{\overline{\h x}}) \quad \mbox{and} \quad
K^* {\overline{\h y}} \in A^* {\overline{\h z}}+\nabla\varphi({\overline{\h x}}) + N_{S}(\overline{\h x}),\end{eqnarray}
thus $(\overline{\h x},\overline{\h y},\overline{\h z},\overline{\h z})$ is a critical point of $\varpi$.
In addition, $\varpi$ satisfies the KL property at $(\overline{\h x}, \overline{\h y}, \overline{\h z}, \overline{\h z})$  with desingularization function $\phi(s)=\frac{\sigma'}{1-\theta}s^{1-\theta}$ for some $\sigma'>0$.
\end{theorem}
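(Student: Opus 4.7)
The plan is to reduce everything to two ingredients already established in the paper: Theorem \ref{KLexp}, which transfers the KL property from $F+\iota_{S}$ to the primal--dual function $\Psi$, and Lemma \ref{lem:LiPong}, which absorbs a quadratic penalty in an auxiliary variable without changing the KL exponent. The bridge between the hypothesis (stated on $F_{\widetilde{\gamma}}+\iota_{S}$) and the conclusion (stated on $\varpi$) is the Moreau--Fenchel identity
\[
\mathcal{G}_{g,\widetilde{\gamma}}(\h w) = \bigl(g^{*}(\cdot) + \tfrac{\widetilde{\gamma}}{2}\|\cdot\|^{2}\bigr)^{\!*}(\h w).
\]
Setting $\widetilde{g}({\h z}) := g^{*}({\h z}) + \frac{\widetilde{\gamma}}{2}\|{\h z}\|^{2}$, so that $\widetilde{g}^{*} = \mathcal{G}_{g,\widetilde{\gamma}}$, the merit function \eqref{varpi} decomposes as
\[
\varpi({\h x},{\h y},{\h z},{\h u}) = \widetilde{\Psi}({\h x},{\h y},{\h z}) + \tfrac{\tau}{2}\|{\h z}-{\h u}\|^{2},
\]
where $\widetilde{\Psi}({\h x},{\h y},{\h z}) := \langle {\h z},A{\h x}\rangle - \widetilde{g}({\h z}) + \varphi({\h x}) + \iota_{S}({\h x}) - \langle K{\h x},{\h y}\rangle + f^{*}({\h y})$ is precisely the primal--dual function of Definition \ref{def:critical} attached to the DC problem $\min_{{\h x}\in S}\widetilde{g}^{*}(A{\h x}) + \varphi({\h x}) - f(K{\h x}) = \min_{{\h x}\in S} F_{\widetilde{\gamma}}({\h x})$.

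The first step is to apply Theorem \ref{KLexp} to $F_{\widetilde{\gamma}}+\iota_{S}$, with $\mathcal{G}_{g,\widetilde{\gamma}}$ playing the role of $g$ in that theorem. Hypothesis (ii) holds globally, since $\mathcal{G}_{g,\widetilde{\gamma}}$ is $C^{1}$ on $\R^{s}$ with $\frac{1}{\widetilde{\gamma}}$-Lipschitz gradient; hypotheses (iii)--(iv) on $f$ are imported directly from the hypotheses of Theorem \ref{KLexana}. For hypothesis (i), I need $\mathcal{G}_{g,\widetilde{\gamma}}$ to be essentially strictly convex, which by \cite[Corollary~18.12]{BC17} is equivalent to $\widetilde{g}$ being essentially smooth. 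Since $g$ is essentially strictly convex, $g^{*}$ is essentially smooth, and adding the smooth convex quadratic $\frac{\widetilde{\gamma}}{2}\|\cdot\|^{2}$, whose domain is all of $\R^{s}$, preserves the three defining properties on $\mathrm{int}(\dom\widetilde{g}) = \mathrm{int}(\dom g^{*})$; this is the one technical point that genuinely needs to be checked. Theorem \ref{KLexp} then yields the uniquely determined dual vectors $\overline{\h y} = \nabla f(K\overline{\h x})$ and $\overline{\h z} = \nabla\mathcal{G}_{g,\widetilde{\gamma}}(A\overline{\h x})$ satisfying \eqref{KKT-KL}, and the KL inequality for $\widetilde{\Psi}$ at $(\overline{\h x},\overline{\h y},\overline{\h z})$ with desingularization function $\phi(s) = \frac{\sigma_{\widetilde{\Psi}}}{1-\theta}s^{1-\theta}$ for some $\sigma_{\widetilde{\Psi}}>0$.

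The second step invokes Lemma \ref{lem:LiPong} with $h = \widetilde{\Psi}$ (so $n=3$), index $i=3$ corresponding to the ${\h z}$-block, the auxiliary variable $\h u$ in place of the lemma's $\h y$, and $\beta=\tau$; this immediately transfers the KL property to $\varpi$ at $(\overline{\h x},\overline{\h y},\overline{\h z},\overline{\h z})$ with desingularization function $\phi(s) = \frac{\sigma'}{1-\theta}s^{1-\theta}$, as required. It remains only to verify that this point is critical for $\varpi$. Using the subdifferential formulas already recorded in the proof of Theorem \ref{residual}, the $\h x$-, $\h y$- and $\h u$-components of $\partial\varpi$ contain $0$ straight from \eqref{KKT-KL}, from $\overline{\h y} = \nabla f(K\overline{\h x})$ (equivalently $K\overline{\h x} = \nabla f^{*}(\overline{\h y})$), and from the vanishing of the penalty gradient at $\h u = {\h z} = \overline{\h z}$. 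For the ${\h z}$-component, the identity $\overline{\h z} = \nabla\mathcal{G}_{g,\widetilde{\gamma}}(A\overline{\h x}) = \frac{1}{\widetilde{\gamma}}\bigl(A\overline{\h x} - \prox_{g,\widetilde{\gamma}}(A\overline{\h x})\bigr)$ gives $A\overline{\h x} - \widetilde{\gamma}\overline{\h z} \in \partial g^{*}(\overline{\h z})$, which is exactly $0 \in \partial_{\h z}\varpi(\overline{\h x},\overline{\h y},\overline{\h z},\overline{\h z})$. I expect the main obstacle in the whole argument to be the verification of hypothesis (i) in the reduction, i.e.\ that the Moreau envelope inherits essential strict convexity through the conjugate relationship; once this is secured, the rest is a clean assembly of Theorem \ref{KLexp} and Lemma \ref{lem:LiPong}.
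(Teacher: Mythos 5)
Your proposal is correct and follows essentially the same route as the paper's (very terse) proof: apply Theorem \ref{KLexp} to the surrogate problem in which $g$ is replaced by the Moreau envelope $\mathcal{G}_{g,\widetilde{\gamma}}$ (whose essential strict convexity you rightly verify via essential smoothness of the conjugate $g^*+\tfrac{\widetilde{\gamma}}{2}\|\cdot\|^2$), identify the critical point through the conjugacy $\mathcal{G}_{g,\widetilde{\gamma}}^* = g^*+\tfrac{\widetilde{\gamma}}{2}\|\cdot\|^2$, and then absorb the coupling term $\tfrac{\tau}{2}\|{\h z}-{\h u}\|^2$ via Lemma \ref{lem:LiPong}. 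The only detail you leave implicit is the lower-semicontinuity caveat of Remark \ref{rem:local_lsc} needed to legitimately invoke Lemma \ref{lem:LiPong} (since $\widetilde{\Psi}$ contains the possibly non-lsc term $-g^*$), a point the paper itself dispatches with a one-line reference.
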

\begin{proof}
The Moreau envelope $\mathcal{G}_{g,\widetilde{\gamma}}$ is differentiable and it has a Lipschitz continuous gradient everywhere, and it inherits the essentially strict convexity from $g$ (\cite{BC17}). The existence of $\overline{\h y}$ and $\overline{\h z}$ is a consequence of Theorem \ref{KLexp}. Since
\begin{align*}
     A{\overline{\h x}} \in \partial \mathcal{G}_{g,\widetilde{\gamma}}^*(\overline{\h z}) = \partial g^*(\overline{\h z})+ {\widetilde{\gamma}}A{\overline{\h z}},
\end{align*}
thus
 $(\overline{\h x},\overline{\h y},\overline{\h z},\overline{\h z})$ is a critical point of $\varpi$.
The last statement follows from Theorem \ref{KLexp} and {Lemma} \ref{lem:LiPong}, by also considering Remark \ref{rem:local_lsc}.
\end{proof}

\section{Application to audio signal processing} \label{sec:Application}

Consider an audio signal of duration $T$ seconds, sampled at a rate of $f_s$ samples per second. For simplicity, we consider a mono audio signal, which we can represent by a vector $\h x\in \R^N $ of size $N=T \cdot f_s$, whose entries are typically in the range of $[-1,1]$, where $-1$ represents the minimum amplitude, and 1 represents the maximum amplitude of the audio signal.
The original audio signal $\overline{\h x}$ is assumed to be corrupted with a noise $\eta $ such that only the resultant $\h u=\overline{ \h x}+\eta$ is known to us.

We aim to reconstruct the original audio signal $\overline{\h  x}$ by considering a minimization problem that aims to find and estimate ${ \h x}$ that is as close as possible to the original $\overline{ \h x}$. This typically involves formulating an objective function that consists of a fidelity term $\norm{\h x-\h u}_2^2$ and a regularization term that imposes specific priors or assumptions about the signal $\h x$.

Audio signals usually are not sparse in their time domain, but their time-frequency representation obtained through the Short-Time Fourier Transform (STFT) or discrete Gabor Transform can exhibit sparsity. The STFT is defined as an injective linear operator $ \mathcal{T} :\R^N \to \C^{M}$, which maps a signal into the time-frequency domain \cite{Feichtinger98}.
We want to reconstruct the original signal $\h x$ by considering the minimization problem
\begin{align*}
    \min_{\h x\in \R^N} \frac{r_1}{2}\norm{\h x-\h u}_2^2+R(\mathcal{T} \h x),
\end{align*}
where we denote by $\norm{\cdot}_2$ the usual Euclidean norm, $r_1>0$ is a regularization parameter and $R:\C^M\to \R$ is a regularizing functional promoting sparsity.

\subsection{Difference of Convex (DC) regularizer}

It is widely recognized that $\|\cdot\|_1$ minimization encourages sparsity. Thanks to this property—along with its convexity—it has become a popular tool for promoting sparse solutions in optimization problems, especially in areas like compressed sensing and sparse signal recovery. However, despite these advantages, convex regularizers have inherent limitations when it comes to capturing more intricate behaviors, which can lead to suboptimal performance in certain cases \cite{Example}.

Recently, more sophisticated sparsity-promoting regularizers have been explored, such as $\|\cdot\|_1 - \|\cdot\|_2$ and $\frac{\|\cdot\|_1}{\|\cdot\|_2}$. These regularizers effectively capture sparse behavior, as demonstrated in \cite{Yin14}. Nonetheless, their adoption has been constrained by the lack of fast and efficient algorithms.

In this section we will consider regularizers of the form $\|\cdot\|_1 - {r_2} \|\cdot\|_2$ ($0 < r_2 <1$). A simple interpretation of its sparsity-inducing properties is as follows: the sparser the vector, the closer its $\|\cdot\|_1$ norm and $\|\cdot\|_2$ norm are. This difference highlights the balance between the sparsity promotion of the $\|\cdot\|_1$ norm and the error distribution balancing of the $\|\cdot\|_2$ norm, making it a potentially powerful tool in optimization problems where both properties are desirable. Additionally, the parameter ${r_2}$ provides further flexibility for various applications.

We will therefore consider the minimization problem
\begin{align} \label{eq:num_KL_objective}
    \min_{\h x\in S} \varphi(\h x)+g(\mathcal{T} \h x)-f(\mathcal{T} \h x),
\end{align}
where
\begin{align}
    &g(\h x) := \norm{\h x}_1+\frac{\lambda}{2}\norm{\h x}_2^2, \notag \\
    &f(\h x) := r_2 \norm{\h x}_2+\frac{\lambda}{2}\norm{\h x}_2^2, \\
     &\varphi(\h x):= \frac{r_1}{2}\norm{\h x- {\h u}}_2^2, \notag \\
    &S:= \{\h x \in \R^N:  \norm{{\h x} - {\h u} }_2 \leq p \norm{\h u}_2\}, \label{eq:num_fgphi_objective}
\end{align}
$\h u \neq 0$ is a fixed vector, corresponding to the corrupted measurement, $r_1$, $r_2$, ${\lambda} >0$ are parameters, and $\mathcal{T}:\R^N\to \R^{2M}$ is an injective linear operator. Here we use a simple zero sum trick to make $g$ and $f$ strongly convex, while keeping $g(\mathcal{T}\h x)-f(\mathcal{T}\h x) = \norm{\mathcal{T}\h x}_1 - r_2 \norm{\mathcal{T}\h x}_2$ as desired. The constant $p\in (0,1)$ serves as an upper bound on the relative noise level in $\h u$, i.e., $\frac{\norm{\eta}_2}{\norm{\h u}_2}$. In the absence of \emph{a priori} knowledge, one can reasonably choose $p\leq \frac{1}{2}$ which corresponds to the scenario where $\norm{\eta}_2\leq \norm{\overline{\h x}}_2$. Recovering signals with higher noise levels is generally infeasible unless additional assumptions are imposed.

Note that $f$ and $g$ are strongly convex with modulus $\lambda$, and $\varphi$ is strongly convex with modulus $r_1$. Moreover, $f$ is differentiable with a locally Lipschitz continuous gradient over any compact convex set not containing the origin. Thus the conditions on $f$ and $g$ in Theorem \ref{KLexp} are met at every ${\h x}\in S$.

\subsection{KL exponent of the merit function}

For ${\widetilde{\gamma}} = \widetilde{\alpha} - \alpha$, let \begin{align}\label{fnu_bar}
{ F}_{{\widetilde{\gamma}}}(\h x)&:=  \mathcal{G}_{g,\widetilde{\gamma}}( {\cal T}{\h x})+\varphi({\h x})-{ f}( {\cal T}{\h x}), 
\end{align} 
We want to prove that ${ F}_{\widetilde{\gamma}} + \iota_S$ satisfies the KL property at every critical point $\overline{\h x}$  with desingularization function $\phi(s)=\frac{\sigma}{1-\theta}s^{1-\theta}$ with $\theta=\frac{1}{2}$ for some $\sigma >0 $. Then, by Theorem \ref{KLexana}, the corresponding merit function ${\varpi}$ defined in (\ref{varfun}) will satisfy the KL property at every critical point.

Indeed, for every critical point $(\overline{\h x}, \overline{\h y}, \overline{\h z}, \overline{\h u})$ of $\varpi$, it holds $\overline{\h u} = \overline{\h z}$, and $(\overline{\h x}, \overline{\h y}, \overline{\h z})$ satisfies \eqref{KKT-KL}. This means that $\overline{\h x}$ is a critical point of $F_{\widetilde{\gamma}}+\iota_S$, therefore, by Theorem \ref{KLexana}, $\varpi$ satisfies the KL property at $(\overline{\h x}, \overline{\h y}, \overline{\h z}, \overline{\h u})$ with desingularization function $\phi(s) = 2\sigma's^\frac{1}{2}$ for some $\sigma'>0$.

\begin{lemma}\label{F_str_conv}Let ${\cal T}$ be an injective linear operator and $\mu>0$ be such that $\| {\cal T}\h{x} \|_2 \ge \mu$ for every $\h{x} \in S$. Then ${F}_{\widetilde{\gamma}}$ is differentiable on $S$ and it has a Lipschitz continuous gradient.
Moreover, if
$$
r_1 > \sigma_\mathcal{T}\left(\frac{2r_2}{\mu}+\lambda\right),
$$
then ${F}_{\widetilde{\gamma}}$ is strongly convex on $S$.
\end{lemma}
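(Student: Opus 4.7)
The plan is to decompose $F_{\widetilde{\gamma}} = \mathcal{G}_{g,\widetilde{\gamma}}\circ\mathcal{T} + \varphi - f\circ\mathcal{T}$ and analyze each summand separately. Standard properties of the Moreau envelope give that $\mathcal{G}_{g,\widetilde{\gamma}}$ is convex, globally differentiable and has $\frac{1}{\widetilde{\gamma}}$-Lipschitz gradient, so $\mathcal{G}_{g,\widetilde{\gamma}}\circ\mathcal{T}$ is convex and $\frac{\sigma_{\mathcal{T}}}{\widetilde{\gamma}}$-smooth on $\R^N$. The middle term $\varphi$ is a convex quadratic, hence $r_1$-strongly convex and $r_1$-smooth. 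The only delicate piece is $f\circ\mathcal{T}$, because $f$ contains the nonsmooth term $r_2\|\cdot\|_2$; here the hypothesis $\|\mathcal{T}\h{x}\|_2\ge\mu$ for $\h{x}\in S$ is exactly what places $\mathcal{T}(S)$ away from the origin, where $\|\cdot\|_2$ is $C^\infty$.

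For the smoothness claim, I would first establish the elementary Lipschitz estimate
\begin{equation*}
\left\|\frac{\h{y}}{\|\h{y}\|_2}-\frac{\h{y}'}{\|\h{y}'\|_2}\right\| \le \frac{\bigl|\|\h{y}\|_2-\|\h{y}'\|_2\bigr|}{\|\h{y}'\|_2} + \frac{\|\h{y}-\h{y}'\|}{\|\h{y}'\|_2} \le \frac{2\|\h{y}-\h{y}'\|}{\mu}
\end{equation*}
for all $\h{y},\h{y}'$ with $\|\h{y}\|_2,\|\h{y}'\|_2\ge\mu$, obtained by inserting $\pm\h{y}/\|\h{y}'\|_2$ and applying the reverse triangle inequality. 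Combined with the $1$-Lipschitz gradient of $\tfrac12\|\cdot\|_2^2$, this shows that $\nabla f$ is $\bigl(\tfrac{2r_2}{\mu}+\lambda\bigr)$-Lipschitz on $\mathcal{T}(S)$, so that $\nabla(f\circ\mathcal{T})=\mathcal{T}^{*}\nabla f(\mathcal{T}\cdot)$ is $L$-Lipschitz on $S$ with $L:=\sigma_{\mathcal{T}}\bigl(\tfrac{2r_2}{\mu}+\lambda\bigr)$. Differentiability of $F_{\widetilde{\gamma}}$ on $S$ and a Lipschitz bound for its gradient follow by summing the three contributions.

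For the strong convexity claim I would combine three ingredients: $\varphi$ is $r_1$-strongly convex, $\mathcal{G}_{g,\widetilde{\gamma}}\circ\mathcal{T}$ is convex, and $f\circ\mathcal{T}$ is convex and $L$-smooth on the convex set $S$. A midpoint application of the descent lemma (apply it at $\h{x}'':=t\h{x}+(1-t)\h{x}'$ to both $\h{x}$ and $\h{x}'$, take the convex combination with weights $t$ and $1-t$, and note that the first-order terms cancel because $t(\h{x}-\h{x}'')+(1-t)(\h{x}'-\h{x}'')=\h{0}$) yields
\begin{equation*}
(f\circ\mathcal{T})(t\h{x}+(1-t)\h{x}') \ge t(f\circ\mathcal{T})(\h{x})+(1-t)(f\circ\mathcal{T})(\h{x}')-\frac{L\,t(1-t)}{2}\|\h{x}-\h{x}'\|^2,
\end{equation*}
i.e., $-f\circ\mathcal{T}$ is $L$-weakly convex on $S$. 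Adding the three inequalities and pairing the strong-convexity modulus $r_1$ of $\varphi$ against the weak-convexity modulus $L$ of $-f\circ\mathcal{T}$ gives that $F_{\widetilde{\gamma}}$ is $(r_1-L)$-strongly convex on $S$, which is strictly positive precisely under the hypothesis $r_1>\sigma_{\mathcal{T}}\bigl(\tfrac{2r_2}{\mu}+\lambda\bigr)$.

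The main obstacle is producing exactly the constant $\tfrac{2r_2}{\mu}$ rather than the sharper $\tfrac{r_2}{\mu}$ that a second-order computation would deliver (the Hessian of $\|\cdot\|_2$ at $\h{y}$ has operator norm $1/\|\h{y}\|_2$). The elementary triangle-inequality bound above produces the stated constant directly, without having to track Hessian estimates along the chord inside $\mathcal{T}(S)$, and matches the hypothesis in the lemma.
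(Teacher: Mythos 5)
Your proof is correct and follows essentially the same route as the paper: the same decomposition of $F_{\widetilde{\gamma}}$, the same elementary estimate $\bigl\|\h{y}/\|\h{y}\|_2-\h{y}'/\|\h{y}'\|_2\bigr\|\le \tfrac{2}{\mu}\|\h{y}-\h{y}'\|$ yielding the Lipschitz modulus $\sigma_{\mathcal{T}}(\tfrac{2r_2}{\mu}+\lambda)$ for $\nabla(f\circ\mathcal{T})$ on $S$, and the same pairing of the $r_1$-strong convexity of $\varphi$ against the weak convexity of $-f\circ\mathcal{T}$. You merely spell out the details (the triangle-inequality computation and the midpoint descent-lemma argument for weak convexity) that the paper states without proof.
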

\begin{proof} Note that ${\bm 0}\notin S$, therefore ${F}_{\widetilde{\gamma}}$ is differentiable on $S$. For every ${\h x}, {\h y}\in S$ it holds
\begin{align*}
    \norm{\frac{\h x}{\norm{\h x}_2}-\frac{\h y}{\norm{\h y}_2}}_2 \leq \frac{2}{\min_{\h x\in S} \norm{\h x}_2} \norm{\h x - \h y}_2.
\end{align*}
Thus $f\circ \mathcal{T}$ has a $\sigma_\mathcal{T}(\frac{2r_2}{\mu}+\lambda)$-Lipschitz continuous gradient on $S$. Since $\mathcal{G}_{g,\widetilde{\gamma}}$ and $\varphi$ have Lipschitz continuous gradient everywhere, ${F}_{\widetilde{\gamma}}$ has Lipschitz continuous gradient on $S$.

Furthermore $-f\circ \mathcal{T}$ is weakly convex on $S$ with modulus $\sigma_\mathcal{T}(\frac{2r_2}{\mu}+\lambda)$. 
Since $ \mathcal{G}_{g,\widetilde{\gamma}}\circ \mathcal{T}$ is convex and $\varphi$ is strongly convex with modulus $r_1$, it yields that ${F}_{\widetilde{\gamma}}$ is strongly convex on $S$ if $r_1 > \sigma_\mathcal{T}(\frac{2r_2}{\mu}+\lambda)$.
\end{proof}

One could also leverage the strong convexity of $ \mathcal{G}_{g,\widetilde{\gamma}}\circ \mathcal{T}$ to relax the condition on $r_1$. However, its strong convexity modulus depends on $\widetilde{\gamma}=\widetilde{\alpha}-\alpha$, a quantity that varies with the iterates of the algorithm.

\begin{theorem}\label{KL1o2}
Let ${\cal T}$ be an injective linear operator and $\mu>0$ be such that $\norm{{\cal T}\h{x}}_2 \ge \mu$ for all $\h{x} \in S$. If $r_1 > \sigma_\mathcal{T}(\frac{2r_2}{\mu}+\lambda)$, then ${F}_{\widetilde{\gamma}}+\iota_S$ satisfies the KL property at every critical point $\h{x}$ with desingularization function $\phi(s) = 2\sigma s^{\frac{1}{2}}$ for some $\sigma >0$.
\end{theorem}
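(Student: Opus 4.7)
The plan is to reduce the KL property of $F_{\widetilde{\gamma}}+\iota_S$ to the Polyak--\L ojasiewicz inequality that is known to hold for strongly convex functions, and then to simply read off the desingularization exponent $\theta=\tfrac12$.

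First, I would invoke Lemma~\ref{F_str_conv}: under the standing hypothesis $r_1>\sigma_\mathcal{T}(\tfrac{2r_2}{\mu}+\lambda)$, the function $F_{\widetilde{\gamma}}$ is differentiable on $S$, has a Lipschitz continuous gradient there, and is strongly convex on $S$ with modulus $m:=r_1-\sigma_\mathcal{T}(\tfrac{2r_2}{\mu}+\lambda)>0$. Consequently, $F_{\widetilde{\gamma}}+\iota_S$ is proper, lower semicontinuous and strongly convex on $\R^N$ with modulus $m$, hence it admits a unique minimizer $\overline{\h x}\in S$. Because strong convexity forces uniqueness of critical points, this $\overline{\h x}$ is the only critical point of $F_{\widetilde{\gamma}}+\iota_S$, and the KL property only needs to be verified there.

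Next, I would establish the PL-type inequality
\begin{equation*}
 F_{\widetilde{\gamma}}(\h x)-F_{\widetilde{\gamma}}(\overline{\h x})\;\le\;\frac{1}{2m}\,\mathrm{dist}^2\!\bigl(\h 0,\partial(F_{\widetilde{\gamma}}+\iota_S)(\h x)\bigr)\quad \forall \h x\in S.
\end{equation*}
This is standard: for any $\h v\in \partial(F_{\widetilde{\gamma}}+\iota_S)(\h x)=\nabla F_{\widetilde{\gamma}}(\h x)+N_S(\h x)$, strong convexity with modulus $m$ gives
$(F_{\widetilde{\gamma}}+\iota_S)(\h y)\ge (F_{\widetilde{\gamma}}+\iota_S)(\h x)+\langle\h v,\h y-\h x\rangle+\tfrac{m}{2}\|\h y-\h x\|^2$ for all $\h y\in\R^N$. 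Minimizing the right-hand side over $\h y$ (attained at $\h y=\h x-\h v/m$) yields $F_{\widetilde{\gamma}}(\overline{\h x})\ge F_{\widetilde{\gamma}}(\h x)-\|\h v\|^2/(2m)$; taking the infimum over $\h v\in\partial(F_{\widetilde{\gamma}}+\iota_S)(\h x)$ gives the claimed inequality.

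Finally, I would translate this inequality into the KL form of Definition~\ref{def:KL}. Taking square roots,
\begin{equation*}
 \bigl(F_{\widetilde{\gamma}}(\h x)-F_{\widetilde{\gamma}}(\overline{\h x})\bigr)^{1/2}\;\le\;\tfrac{1}{\sqrt{2m}}\,\mathrm{dist}\!\bigl(\h 0,\partial(F_{\widetilde{\gamma}}+\iota_S)(\h x)\bigr),
\end{equation*}
and with $\phi(s)=2\sigma s^{1/2}$ where $\sigma:=\tfrac{1}{\sqrt{2m}}$, this is precisely $\phi'(F_{\widetilde{\gamma}}(\h x)-F_{\widetilde{\gamma}}(\overline{\h x}))\,\mathrm{dist}(\h 0,\partial(F_{\widetilde{\gamma}}+\iota_S)(\h x))\ge 1$ whenever $F_{\widetilde{\gamma}}(\h x)>F_{\widetilde{\gamma}}(\overline{\h x})$. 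Hence the KL property holds at $\overline{\h x}$ with desingularizing function $\phi(s)=2\sigma s^{1/2}$, proving the theorem. No real obstacle arises: the only nontrivial ingredient is the strong convexity supplied by Lemma~\ref{F_str_conv}; the PL-implies-KL step is a standard manipulation, and the uniqueness of the critical point makes the ``every critical point'' quantifier vacuous beyond a single verification.
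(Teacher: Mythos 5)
Your proof is correct, but it takes a genuinely different and in fact cleaner route than the paper. The paper splits into two cases according to whether the critical point lies in $\mathrm{int}(S)$ or on $\mathrm{bd}(S)$: the interior case is essentially your PL argument, but the boundary case is handled by a separate, more laborious computation that exploits the explicit form of the normal cone of the ball $S=\{\h x : \norm{\h x-\h u}_2\le p\norm{\h u}_2\}$, the monotonicity of $N_S$, the strong monotonicity of $\nabla F_{\widetilde\gamma}$ to get the error bound $\dist^2(\h 0,\partial(F_{\widetilde\gamma}+\iota_S)(\h x))\ge \tfrac{\widetilde\mu^2}{4}\norm{\h x-\overline{\h x}}^2$, the Lipschitz continuity of $\nabla F_{\widetilde\gamma}$, and the identity $\langle \nabla F_{\widetilde\gamma}(\overline{\h x}),\h x-\overline{\h x}\rangle=\tfrac{\overline\delta}{2}\norm{\h x-\overline{\h x}}^2$ valid on the sphere; the resulting KL inequality is only local, with constants involving $L$ and $\overline\delta$. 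You instead observe that $F_{\widetilde\gamma}+\iota_S$ is globally strongly convex as an extended-real-valued function (which is legitimate, since $S$ is convex and $F_{\widetilde\gamma}$ is strongly convex on $S$ by Lemma~\ref{F_str_conv}), so its limiting subdifferential coincides with the convex subdifferential, its unique critical point is the unique minimizer, and the strengthened subgradient inequality yields the PL bound $F_{\widetilde\gamma}(\h x)-F_{\widetilde\gamma}(\overline{\h x})\le \tfrac{1}{2m}\dist^2(\h 0,\partial(F_{\widetilde\gamma}+\iota_S)(\h x))$ on all of $S$. This buys a global KL inequality with the explicit constant $\sigma=1/\sqrt{2m}$, avoids the case distinction and the Lipschitz-gradient hypothesis entirely, and would work verbatim for any nonempty closed convex $S$ rather than just a ball; the only price is the (correct but worth stating explicitly) identification of the limiting subdifferential of the convex extended function with its convex subdifferential, which justifies applying the subgradient inequality to elements of $\partial(F_{\widetilde\gamma}+\iota_S)(\h x)$.
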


\begin{proof}
Let $\overline{\h{x}}$ be a critical point of $F_{\widetilde{\gamma}}+\iota_S$. We distinguish two cases:

\medskip
\noindent \textbf{Case 1:} $\overline{\h{x}} \in \mathrm{int}(S)$.

By the strong convexity of ${F}_{\widetilde{\gamma}}$ on $S$, there exists $\widetilde{\mu} > 0$ and a neighborhood $U \subseteq \mathrm{int} (S)$ of $\overline{\h{x}}$ such that for every $\h{x} \in U$
\begin{align*}
\frac{1}{2\widetilde{\mu}} \dist^2(\bm 0, \partial (F_{\widetilde{\gamma}}+\iota_S)(\h{x})) 
= \frac{1}{2\widetilde{\mu}} \norm{\nabla {F}_{\widetilde{\gamma}}(\h{x})}^2 
&\ge {F}_{\widetilde{\gamma}}(\h{x}) - {F}_{\widetilde{\gamma}}(\overline{\h{x}}) 
\end{align*}
Thus, the KL property holds in this case.

\medskip
\noindent
\textbf{Case 2:} $\overline{\h{x}} \in \mathrm{bd}(S)$.

By the structure of $S$, the normal cone of $S$ at ${\h x }\in \mathrm{bd}(S)$ is
$$
N_S({\h{x}}) = \{ \delta ({\h{x}} - \h{u}) \mid \delta \ge 0 \}. 
$$
Let $\overline{\delta} \ge 0$ be such that $\overline{\xi} = \overline{\delta}  (\overline{\h{x}} - \h{u}) \in N_{S}(\overline{\h x})$ satisfies
$$
\nabla {F}_{\widetilde{\gamma}}(\overline{\h{x}}) + \overline{\xi} = 0.
$$
Let $\h{x} \in S$ and every $\xi \in N_S(\h{x})$. By the monotonicity of the normal cone operator, we have
\begin{align*}
\skal{\xi - \overline{\xi}, \h{x} - \overline{\h{x}}} \ge 0.
\end{align*}
By the strong convexity of ${F}_{\widetilde{\gamma}}$ on $S$, there exists $\widetilde{\mu} > 0$ such that
\begin{align*}
\skal{ \nabla{F}_{\widetilde{\gamma}}(\h{x}) - \nabla {F}_{\widetilde{\gamma}}(\overline{\h{x}}), \h{x} - \overline{\h{x}} } 
\ge \frac{\widetilde{\mu}}{2} \norm{\h{x} - \overline{\h{x}}}^2.
\end{align*}
Combining the two, we obtain
\begin{align*}
\norm{ \xi + \nabla {F}_{\widetilde{\gamma}}(\h{x}) } \norm{ \h{x} - \overline{\h{x}} } 
&\ge \skal{ \xi + \nabla {F}_{\widetilde{\gamma}}(\h{x}), \h{x} - \overline{\h{x}} } \\
&= \skal{ \xi - \overline{\xi}, \h{x} - \overline{\h{x}} } + \skal{ \nabla {F}_{\widetilde{\gamma}}(\h{x}) - \nabla {F}_{\widetilde{\gamma}}(\overline{\h{x}}), \h{x} - \overline{\h{x}} } \\
&\ge \frac{\widetilde{\mu}}{2} \norm{\h{x} - \overline{\h{x}}}^2.
\end{align*}
Therefore,
\begin{align}\label{KL_intermed}
\dist^2(\bm 0, \partial (F_{\widetilde{\gamma}}+\iota_S)(\h{x})) 
&= \inf_{\xi \in N_S(\h{x})} \norm{ \xi + \nabla {F}_{\widetilde{\gamma}}(\h{x}) }^2 \ge \frac{\widetilde{\mu}^2}{4} \norm{\h{x} - \overline{\h{x}}}^2.
\end{align}
Denoting by $L >0$ the Lipschitz constant of the gradient of ${F}_{\widetilde{\gamma}}$ on $S$, for every $\h x\in S$ it holds
\begin{align}\label{KL_intermed2}
F_{\widetilde{\gamma}}(\h{x}) - F_{\widetilde{\gamma}}(\overline{\h{x}}) 
\le \skal{ \nabla {F}_{\widetilde{\gamma}}(\overline{\h{x}}), \h{x} - \overline{\h{x}} } + \frac{L}{2} \norm{ \h{x} - \overline{\h{x}} }^2.
\end{align}

If $\overline{\delta} = 0$, then $\nabla {F}_{\widetilde{\gamma}}(\overline{\h{x}}) = 0$ and the KL property follows directly from the two inequalities above.

If $\overline{\delta} > 0$, then there exists a neighborhood $U$ of $\overline{\h{x}}$ such that ${F}_{\widetilde{\gamma}}$ is differentiable on $U$ and $\norm{ \nabla {F}_{\widetilde{\gamma}}({\h{x}}) } > \epsilon$ for some $\epsilon > 0$ and every $\h{x} \in U$. Then, for every $\h{x} \in U \cap \mathrm{int}(S)$
\[
\dist^2(0, \partial (F_{\widetilde{\gamma}}+\iota_S)(\h{x})) = \norm{ \nabla {F}_{\widetilde{\gamma}}(\h{x}) }^2 > \epsilon^2.
\]
For $\h{x} \in U \cap \mathrm{bd}(S)$, note that $\norm{ \overline{\h{x}} - \h{u} }^2 = \norm{ \h{x} - \h{u} }^2$, thus
\begin{align*}
\skal{\nabla {F}_{\widetilde{\gamma}}(\overline{\h{x}}), \h{x} - \overline{\h{x}} } 
&= \delta \skal{ -(\overline{\h{x}} - \h{u}), \h{x} - \overline{\h{x}} } \\
&= \frac{\delta}{2} \norm{ \overline{\h{x}} - \h{u} }^2 + \frac{\delta}{2} \norm{ \overline{\h{x}} - \h{x} }^2 - \frac{\delta}{2} \norm{ \h{x} - \h{u} }^2 \\
&= \frac{\delta}{2} \norm{ \overline{\h{x}} - \h{x} }^2.
\end{align*}

Thus, by \eqref{KL_intermed} and \eqref{KL_intermed2}, we can choose an open neighborhood $V \subseteq U$ of $\overline{\h x}$ such that for every $\h x \in V \cap S$ it holds
\begin{align*}
\frac{2(L + \delta)}{\widetilde{\mu}^2} \dist^2(0, \partial (F_{\widetilde{\gamma}}+\iota_S)(\h{x})) 
\ge \frac{L + \delta}{2} \norm{ \overline{\h{x}} - \h{x} }^2 
\ge F_{\widetilde{\gamma}}(\h{x}) - F_{\widetilde{\gamma}}(\overline{\h{x}}).
\end{align*}
This concludes the proof.
\end{proof}

\subsection{Numerical results} All experiments were carried out on audio sequences from the test set \textcopyright \texttt{EBU SQAM CD}, specifically {the} signals numbered $39$--$70$, which include instruments, orchestra, music, and speech. To ensure consistent duration and comparable signal energy, we extracted a 2-second segment $\overline{\h x}$ from each audio file, namely the interval $t = 1\text{s}$ to $t = 3\text{s}$. White Gaussian noise with mean 0 was then added to each signal {with a standard deviation set to $\sigma= \frac{1}{20\sqrt{L}}\norm{\overline{\h x}}$, where $L$ is the length of the sampled signal $\overline{\h x}$}.

For the STFT operator $\mathcal{T}$, we used a Gaussian window of length $2048$ and a hop size of $512$. This resulted in a redundancy factor of $4$, so that $\sigma_{\mathcal{T}}=\norm{\mathcal{T}}^2=4$, and the injectivity constant is $s=1$ (i.e., $\norm{\mathcal{T}\,\h {x}} \ge s\,\norm{\h {x}}$ for every $\h {x}$, see \cite{Feichtinger98}). As a quality measure for the reconstruction, we used the \textit{improvement in signal-to-noise ratio} (ISNR):
\begin{align*}
    \text{ISNR}(\h {x}^k) 
    &= 10 \log_{10} \left(\frac{\norm{\overline{\h {x}} - \h {u}}_2^2}{\norm{\overline{\h {x}} - \h {x}^k}_2^2}\right),
\end{align*}
where $\overline{\h {x}}$ denotes the unknown original signal, $\h {u}$ the noisy signal, and $\h {x}^k$ the reconstruction after $k$ iterations. We set $\lambda := 10^{-1}$ in (\ref{eq:num_KL_objective}) and $p := \tfrac12$. 

Since $g$ has full domain, Assumption~\ref{ass:A(S)} holds for any $r>0$. 
The Lipschitz constant of $g$ on the set $\mathcal{K}_r$ {as defined in Assumption \ref{ass:A(S)}} is given by
\[
\ell_{g,\mathcal{K}_r} \;=\; \sqrt{N} \;+\; \lambda \max_{\h {z}\,\in\,\mathcal{K}_r}\!\norm{\h {z}} = \sqrt{N} \;+\; \lambda \big(\norm{T}\,(1+p)\,\norm{\h {u}} \;+\; r\big).
\]
As $r \to +\infty$, we have $\tfrac{r}{\ell_{g,\mathcal{K}_r}} \to \tfrac{1}{\lambda}$, so every $0<\gamma<\tfrac{2}{\lambda}$ satisfies the assumption in Theorem~\ref{Algo1:conv} for sufficiently large $r$. Furthermore, ${\h z_0}$ can be also chosen arbitrary. We also have
\[
\mu \;=\; \inf_{\h {x} \,\in\, S}\!\norm{\mathcal{T}\,\h {x}} 
\;\ge\; s\,(1-p)\,\norm{\h {u}}
\;=\; \tfrac12\,\norm{\h {u}},
\]
so the condition in Theorem~\ref{KL1o2} becomes $r_1 > \tfrac{8\,r_2}{\norm{\h {u}}} + 4\lambda$. 

In our test setup, we have $\norm{\h {u}} \ge 2$ for every ${\h u}$ and every $r_1$ and $r_2$ fulfilling $r_1 > 4r_2 + 4\lambda$, hence the strong convergence guarantee holds.

We used signal 39 from the test set to analyze the effect of the parameters on the iterative scheme. We considered the parameters $\beta_k \equiv 1$ for all $k \geq 0$, $\rho_0 = 1$, $\mu = 1$, and $\varepsilon = 10^{-10}$, as preliminary tests showed that these parameters had little effect on long-term convergence or reconstruction quality. We used the explicit step size $$\alpha_k := \alpha - \frac{\gamma}{\sqrt{k+1}} \quad \forall k \geq 0.$$

We first investigated which values of $r_1$ and $r_2$ give the best reconstruction. For $\gamma=0.1$ and $\alpha = \gamma/64$ we show the resulting ISNR of the algorithm \ref{AdaptiveDPFS} after $1000$ iterations for different values of $r_1$ and $r_2$ in  Table \ref{tab:r1r2}.

\begin{table}[htbp]
\centering
\renewcommand{\arraystretch}{1.5} 
\setlength{\tabcolsep}{8pt} 
\begin{tabular}{|c|c|c|c|c|c|}
\hline
ISNR & $r_2=0$ & $r_2=0.2$ & $r_2=0.4$  & $r_2=0.6$ & $r_2=0.8$ \\ \hline
$r_1 = 4$ & 2.473 & 2.667 & 2.801 & 2.868 & 2.871 \\
$r_1 = 8$ & 3.532 & 3.663 & 3.750 & 3.792 & 3.792 \\
$r_1 = 9$ & 3.603 & 3.714 & 3.787 & \textbf{3.821} & 3.820 \\
$r_1 = 10$ & 3.582 & 3.673 & 3.732 & 3.759 & 3.756 \\
$r_1 = 11$ & 3.489 & 3.561 & 3.607 & 3.627 & 3.624 \\
$r_1 = 12$ & 3.349 & 3.406 & 3.441 & 3.456 & 3.452 \\
$r_1 = 16$ & 2.685 & 2.706 & 2.719 & 2.723 & 2.719 \\
$r_1 = 24$ & 1.774 & 1.778 & 1.780 & 1.779 & 1.776 \\\hline
\end{tabular}
\caption{\textit{ISNR for Adaptive DPFS after $1000$ iterations for different values of $r_1$ and $r_2$.}}
\label{tab:r1r2}
\end{table}

The case $r_2 = 0$ corresponds to standard convex $\norm{\cdot}_1$-regularization, while $r_2 > 0$ introduces a difference-of-convex (DC) structure. {We observe a clear gain in ISNR, whenever $r_2$ is chosen positive}. This indicates that the DC regularizer improves the reconstruction quality.

For $r_1=9$ and $r_2=0.6$, we plot the resulting ISNR for different values of $\alpha$ and $\gamma$, where $\alpha <\frac{\gamma}{2}$, in Table \ref{tab:alphagamma}. {For fixed $\gamma$, the algorithm consistently converged to the same reconstruction, largely independent of $\alpha$, with only minimal speed-up for smaller $\alpha$. As $\alpha$ is merely a lower bound on the decreasing sequence $(\alpha_k)_{k \geq 0}$, its value has little impact on performance. In contrast, $\gamma$ affects $\alpha_k - \alpha = \frac{\gamma}{\sqrt{k+1}}$ and thus influences the condition in line~5 of Algorithm~\ref{AdaptiveDPFS}, highlighting its greater relevance.}

\begin{table}[htbp]
\centering
\renewcommand{\arraystretch}{1.5} 
\setlength{\tabcolsep}{8pt} 
\begin{tabular}{|c|c|c|c|c|c|}
\hline
ISNR & $\alpha = \gamma / 64$ & $\alpha = \gamma / 32$ & $\alpha = \gamma / 16$ & $\alpha = \gamma /8$ &$\alpha = \gamma /4$   \\ \hline
$\gamma=$0.05 & 3.532561 & 3.532414 & 3.532100 & 3.531389 & 3.529637 \\
$\gamma=$0.1 & \textbf{3.823013} & 3.822894 & 3.822646 & 3.822101 & 3.820805 \\
$\gamma=$0.15 & 3.788634 & 3.788586 & 3.788489 & 3.788288 & 3.787863 \\
$\gamma=$0.2 & 3.765046 & 3.765021 & 3.764972 & 3.764877 & 3.764695 \\
 \hline
\end{tabular}
\caption{\textit{ISNR for Adaptive DPFS after $1000$ iterations for different values of $\gamma$ and $\alpha$.}}
\label{tab:alphagamma}
\end{table}

In Figure~\ref{fig:convergence_plot} we plot the distance between the iterates of Adaptive DPFS and their limiting critical point to illustrate the linear convergence rate of the algorithm.

\begin{figure}[htbp]
    \centering
    \includegraphics[width=0.8\linewidth]{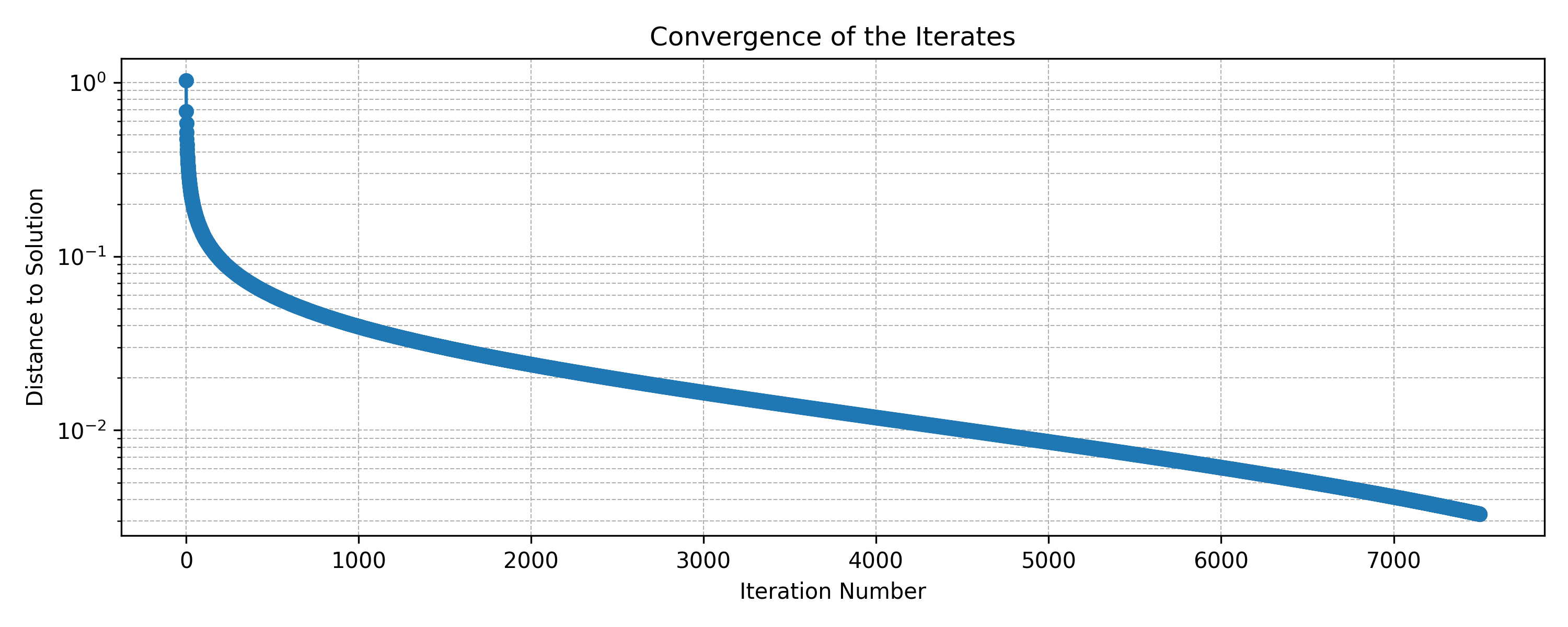}
    \caption{Convergence of the iterates ${\h x}^k$ generated by Adaptive DPFS to their limit $\widebar{\h x}$ expressed in terms of $\norm{{\h x}^k - \widebar{\h x}}$.}
    \label{fig:convergence_plot}
\end{figure}

\begin{algorithm}
\caption{Adaptive DPFS with line search}
\label{A-FSPS-MLS}
\begin{algorithmic}[1]
    \Statex{\textbf{given:} $\ell_{\nabla \varphi}>0$, $\sigma_{A}$.}
\Statex{\textbf{choose:} $\beta_0>0$,  $\rho_0>0$,   $0<\alpha<\frac{\gamma}{2}$, $\alpha_0 = \alpha + \gamma$,
			$\delta_0 = \frac{\alpha}{\alpha_0}$, $\mu >0$, $\varepsilon>0$, $\eta > 1$, $0<\nu<1,\; c>0$, $T\in {\mathbb N}$, $t \in{\mathbb N}$, and a starting point $({\h x}^0, {\h y}^0, {\h z}^0)$}
\For{$k = 0, 1, 2, ...$}		
\State{${\h y}^{k+1} := \prox_{f^*,1/\beta_0}({\h y}^k + \frac{1}{\beta_0}K{\h x}^{k})$,}
		\State{${\h z}^{k+1} := \prox_{g^*,1/\alpha_k}(\delta_k{\h z}^k + \frac{1}{\alpha_k}A{\h x}^{k})$.}
		\If{$(\alpha_k - \alpha)\|{\h z}^{k+1}\| > \varepsilon$}
				\State{{Update {$\alpha_{k+1}$ according to \eqref{scalar} and \eqref{scalar2},}}}
				\State{Update $\rho_{k+1} := {\frac{\ell_{\nabla  \varphi}}{2}} + \frac{3\gamma\sigma_{A}}{(\alpha_{k+1} - \alpha)^2} + \mu$.}
				\State{Update $\delta_{k+1} := \frac{\alpha}{\alpha_{k+1}}$.}
       \Else
           \State{$\alpha_{k+1}:=\alpha_k$; $\rho_{k+1}:=\rho_k$; $\delta_{k+1}:=\delta_k$.}
	    \EndIf
		\State{$\h d^{k+1} := -\nabla \varphi({\h x}^k) + K^* {\h y}^{k+1} - A^* {\h z}^{k+1}.$}
        \State{$\rho_{k+1,0}:=\rho_{k+1}$.}
		\For{$j = 0 : t-1$}
		\State{$\rho_{k+1,j} := \nu\eta^{j}\rho_{k+1,0}.$}
		\State{$ \widehat{\h x}^{k+1}:={ \text{Proj}_S}\left({\h x}^k + \frac{1}{\rho_{k+1,j}}{\h d}^{k+1}\right).$}
		\If{$F(\widehat{\h x}^{k+1})\leq \max\limits_{[k-T]_{+}\leq t\leq k}F(\h x^{t}) - \frac{c}{2}||\h x^{k}-\widehat{\h x}^{k+1}||^{2}$}
		\State $\h x^{k+1} := \widehat{\h x}^{k+1}.$
        \State $\rho_{k+1}:=\rho_{k+1,j}$.
		\State \textbf{break}
		\EndIf
		\EndFor	
		\EndFor
	\end{algorithmic}
\end{algorithm}

We also introduced an enhanced version of Adaptive DPFS by integrating a nonmonotone line search technique \cite{WNF09}, see Algorithm \ref{A-FSPS-MLS}. This addition was designed to improve the overall performance of Adaptive DPFS. For simplicity, we considered $\beta_k\equiv \beta_0$ for every $k \geq 0$, but the parameter $\beta_k$ could be adjusted at each iteration.

We initialized Algorithm~\ref{A-FSPS-MLS} with $\beta_0 = 1$, $\rho_0 = 1$, $\mu = 1$, and $\varepsilon = 10^{-10}$. We further set $c = 10^{-3}$, $T = 1$, and $t = 5$.
Table~\ref{table:ETA_NU_isnr} reports the ISNR values achieved for various choices of $\nu$ and $\eta$, with $\gamma = 0.1$, $\alpha = \gamma/64$, $r_1 = 9$, and $r_2 = 0.6$. The nonmonotone line search technique yields slightly better results than Algorithm~\ref{AdaptiveDPFS}  on fixed iteration count when $\nu$ is close to 0 or 1, and in particular accelerates the convergence in the early iterations. The parameter $\eta$ has no significant influence on the overall performance.

\begin{table}[htbp]
\centering
\renewcommand{\arraystretch}{1.5} 
\setlength{\tabcolsep}{8pt} 
\begin{tabular}{|c|c|c|c|c|}
\hline
ISNR &  $\eta = 1.1$ & $\eta = 1.3$ & $\eta = 1.5$ & $\eta = 1.7$  \\
\hline
$\nu = 0.1$ & 3.831794 & 3.831795 & 3.831794 & 3.831794 \\

$\nu = 0.2$ & 3.827517 & 3.827460  & 3.827415 & 3.827495 \\

$\nu = 0.3$ & 3.822379 & 3.822576  & 3.822944 & 3.823497  \\

$\nu = 0.4$ & 3.818764 & 3.820110 & 3.820616 & 3.820884  \\

$\nu = 0.5$ & 3.817944 & 3.817945 & 3.817944 & 3.817944 \\

$\nu = 0.6$ & 3.820944 & 3.820944 & 3.820944 & 3.820944 \\

$\nu = 0.7$ & 3.824268 & 3.824268 & 3.824268 & 3.824268  \\

$\nu = 0.8$ & 3.824263 & 3.824264 & 3.824263 & 3.824263 \\

$\nu = 0.9$ & 3.817724 & 3.817725 & 3.817724 & 3.817724 \\
\hline

\end{tabular}
\caption{ISNR of Adaptive DPFS after $1000$ iterations for different values of $\eta$ and $\nu$.}
\label{table:ETA_NU_isnr}
\end{table}

For the purpose of comparisons, we considered in the context of solving the DC program \eqref{eq:num_KL_objective} also the double proximal-gradient (FBDC) method of Banert and Bo\c{t} \cite{SB19}, which generates the sequence of iterates for all $k \geq 0$ as follows:
\begin{equation} \label{method:BaBo}
\begin{aligned}
    \h x^{k+1} &:= \prox_{\gamma_k g\circ \mathcal{T}} \left( \h x^k + \gamma_k \mathcal{T}^* \h y^k - \gamma_k \nabla \varphi (\h x^k) \right), \\
    \h y^{k+1} &:= \prox _{\mu_k f^*} \left( \h y^k + \mu_k \mathcal{T} \h x^{k+1} \right).
\end{aligned}
\end{equation}

In \cite{SB19} they applied this method to a DC program of the form \eqref{eq:num_KL_objective}. In the absence of a closed formula for $\prox_{g\circ {\cal T}}$, they used an inner-loop solver with a stopping criterion, which we also considered to perform the comparison. Based on preliminary tests, we chose $\gamma_k \equiv 0.025$ and $\mu_k \equiv 0.5$ for all $k \geq 0$, {as the best performing parameter combination for FBDC in this setting.}

We carried out the  comparison of Algorithm~\ref{AdaptiveDPFS}, Algorithm~\ref{A-FSPS-MLS}, and FBDC method ~\eqref{method:BaBo} on the signals 40–70 of the test set mentioned above. Figure~\ref{fig:enter-label} illustrates the evolution of the ISNR with respect to the computational time. Table~\ref{tab:isnr-results} reports the average ISNR achieved, the quartiles, and the average number of iterations performed within the respective computational time.

\begin{figure}
    \centering
    \includegraphics[width=0.5\linewidth]{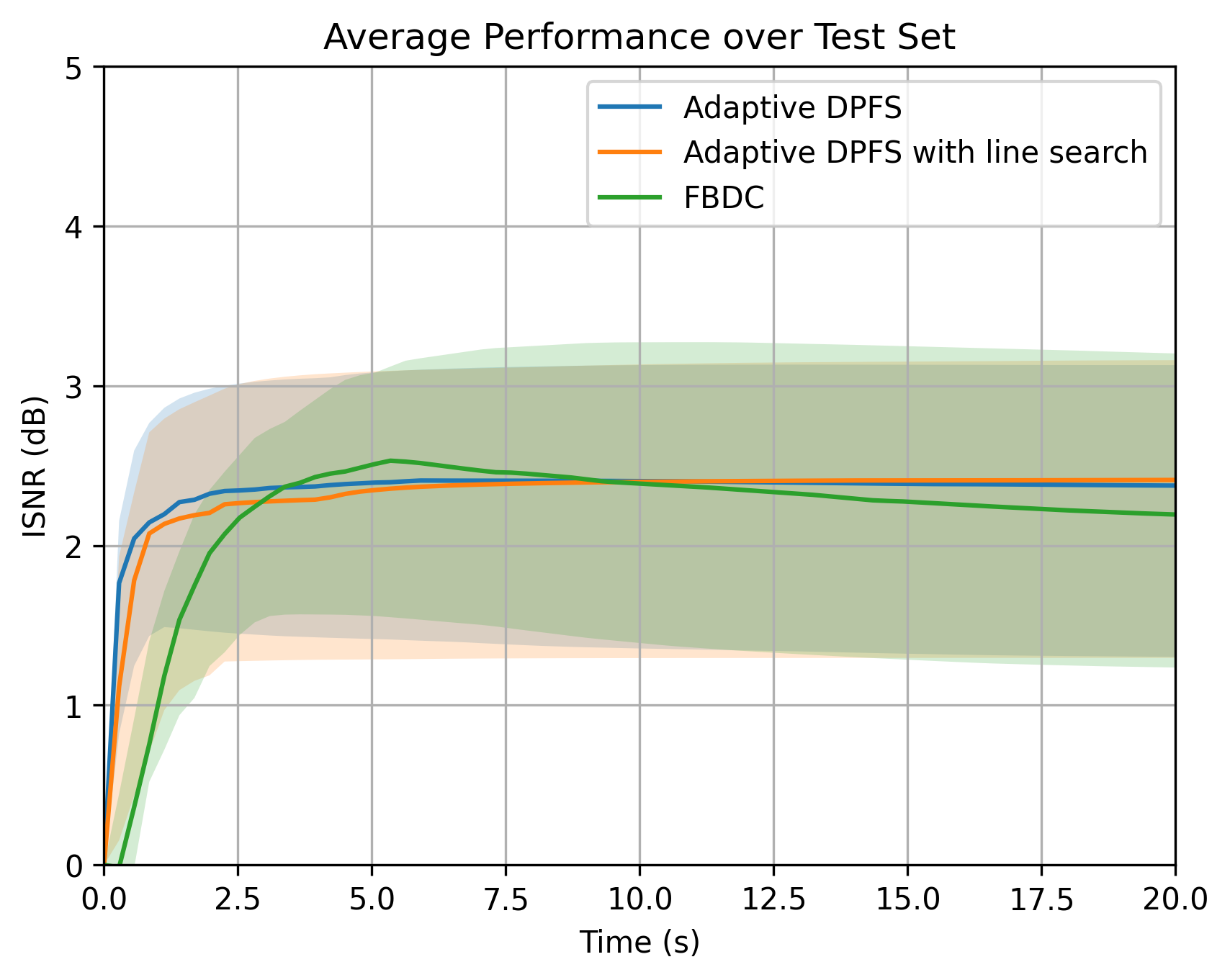}
    \caption{Comparison of the ISNR achieved by Adaptive DFPS, Adaptive DFPS with line search, and the FBDC method.}
    \label{fig:enter-label}
\end{figure}
\begin{table}
    \centering
\begin{tabular}{l|lccc}
    \textbf{Time} & \textbf{Method} & \textbf{ISNR (dB)} & \textbf{25\% / 75\%} & \textbf{Iter.} \\
    \hline
    \multirow{3}{*}{30s} & Adaptive DPFS     & 2.52 & 1.46 / 3.19 & 363 \\
                          & + Line Search     & 2.46 & 1.31 / 3.15 & 270.2 \\
                          & FBDC              & 2.35 & 1.22 / 3.13 & 21.9  \\
    \hline
    \multirow{3}{*}{100s} & Adaptive DPFS     & 2.52 & 1.39 / 3.21 & 1302 \\
                          & + Line Search     & 2.48 & 1.29 / 3.18 & 968  \\
                          & FBDC              & 2.34 & 1.19 / 3.09 & 63.5  \\
\end{tabular}

        \caption{Average ISNR, quartiles and the average iteration count in the respective computational time.}
        \label{tab:isnr-results}
\end{table}

\appendix

\end{document}